\numberwithin{equation}{theorem}
\newcommand{\picl}[0]{\operatorname{Pic}^{\rm loc}}
\newcommand{\picls}[0]{\operatorname{\mathbf{Pic}}^{\rm loc}}
\newcommand{\Ex}[0]{\operatorname{Ex}}
\DeclareMathOperator{\ssPic}{ \sP\!\!\;\!\text{\calligra{\Large ic}\,}}
\newcommand{\kay}{\mathcal{k}}
\renewcommand{\:}{\colon}
\newcommand{\eg}{{\itshape e.g.} }
\newcommand{\p}{\mathfrak{p}}
\DeclareMathOperator{\depth}{depth}
\newcommand{\Gal}{\textnormal{Gal}}
\newcommand{\rdown}[1]{\lfloor{#1}\rfloor}
\newcommand{\diff}[0]{\operatorname{Diff}} 
\DeclareMathOperator{\Cl}{Cl}
\DeclareMathOperator{\NS}{NS}
\theoremstyle{theorem}
\renewcommand{\sF}{\mathcal{F}}
\renewcommand{\sH}{\mathcal{H}}
\renewcommand{\sL}{\mathcal{L}}
\renewcommand{\sO}{\mathcal{O}}
\renewcommand{\sP}{\mathcal{P}}
\let\@wraptoccontribs\wraptoccontribs
\renewcommand\headrulewidth{0pt}
\begin{document}
\title[\'Etale fundamental groups of KLT threefold singularities]{On the local \'etale fundamental group of KLT threefold singularities} 
\author[J.~Carvajal-Rojas]{Javier Carvajal-Rojas}
\address{KU Leuven\\ Department of Mathematics\\ Celestijnenlaan 200B \\3001 Heverlee\\Belgium \newline\indent
Universidad de Costa Rica\\ Escuela de Matem\'atica\\ San Jos\'e 11501\\ Costa Rica}
\email{\href{mailto:javier.carvajalrojas@epfl.ch}{javier.carvajal-rojas@kuleuven.be}}
\author[A.~St\"abler]{Axel St\"abler}
\address{Universit\"at Leipzig\\ Institut f\"ur Mathematik\\Augustusplatz 10\\
04109 Leipzig\\Germany} 
\email{\href{mailto:staebler@uni-leipzig.de}{staebler@math.uni-leipzig.de}}

\contrib[with an appendix by]{J\'anos Koll\'ar}
\address{Princeton University\\Princeton NJ 08544-1000\\USA} 
\email{\href{mailto:kollar@math.princeton.edu}{kollar@math.princeton.edu}}

\keywords{Kawamata log terminal, \'etale fundamental group, $F$-regularity, Koll\'ar components.}

\thanks{
The first named author was partially supported by the grants ERC-STG \#804334 and FWO \#G079218N. The second named author was partially supported by SFB-Transregio 45 Bonn-Essen-Mainz financed by Deutsche Forschungsgemeinschaft.}

\subjclass[2020]{14E30, 14B05, 13A35, 14E20}

\begin{abstract}
Let $S$ be KLT threefold singularity over an algebraically closed field of positive characteristic $p>5$. We prove that its local \'etale fundamental group is tame and finite. Further, we show that every finite unipotent torsor over a big open of $S$ is realized as the restriction of a finite unipotent torsor over $S$.
\end{abstract}
\maketitle

\section{Introduction}
Consider the following setup:

\begin{setup} \label{setup}
Let $(X,\Delta)$ be a Kawamata log terminal (KLT fo short) log variety of dimension $d \geq 2$ and defined over a field $\kay$ of characteristic $p \geq 0$. Let $\bar{x} \to X$ be a geometric closed point.
Set $R \coloneqq \sO_{X,\bar{x}}^{\mathrm{sh}}$ and denote its field of fractions by $K$. Let $Z \subset S \coloneqq \Spec R$ be a closed subscheme of codimension at least $2$ and set $S^{\circ} \coloneqq S \smallsetminus Z$. In other words, $S^{\circ}$ is an arbitrary big open subset of $S$. We choose and fix a geometric generic point $\bar{\eta} \to S^{\circ}$ (i.e., a separable closure of $K$) and let $\pi_1^{\mathrm{\acute{e}t}}(S^{\circ}) = \pi_1^{\mathrm{\acute{e}t}}(S^{\circ}, \bar{\eta})$ denote the \'etale fundamental group of $S^{\circ}$ with base point $\bar{\eta}$. If $Z$ cuts out the singular locus, we may refer to it as the (\'etale) fundamental group of the singularity.
\end{setup}
In this paper, we are concerned with the problem of determining finiteness of $\pi_1^{\mathrm{\acute{e}t}}(S^{\circ})$. In \cite{XuFinitenessOfFundGroups}, C.~Xu proved that if $\kay = \mathbb{C}$ then $\pi_1^{\mathrm{\acute{e}t}}(S\smallsetminus\{x\})$ is finite; \cf \cite{GrebKebekusPeternellEtaleFundamental, TianXuFinitenessOfFundamentalGroups, BraunFundamentalGroupKLTTopological}. Inspired by this result, K.~Schwede, K.~Tucker, and the first named author proved that if $p>0$ and $S$ is strongly $F$-regular then $\pi_1^{\mathrm{\acute{e}t}}(S^{\circ})$ is finite and its order is at most $1/s(R)$ and prime to $p$; see \cite{CarvajalSchwedeTuckerEtaleFundFsignature}, \cf \cite{CarvajalFiniteTorsors}.\footnote{Here, $s(R)$ denotes the $F$-signature of $R$.
} It is worth recalling that Bhatt--Gabber--Olsson later introduced in \cite{BhattGabberOlssonFinitenessFun} a spreading out technique to prove Xu's results from the one in \cite{CarvajalSchwedeTuckerEtaleFundFsignature}. We also recommend to see \cite{JeffriesSmirnovTransformationRuleForNaturalMultiplicities}, where a characteristic free approach is discussed in terms of differential signatures. However, to the best of the authors' knowledge, little is known about the finiteness of $\pi_1^{\mathrm{\acute{e}t}}(S^{\circ})$ if $S$ is a non-strongly-$F$-regular KLT singularity in positive characteristic---not even in sufficiently large characteristic. In this work, we attempt to address this case in dimension $3$ and $p>5$. In positive characteristic, there is another aspect regarding $\pi_1^{\mathrm{\acute{e}t}}(S^{\circ})$ that is worth investigating, namely, its \emph{tameness}. For strongly $F$-regular $S$, we know that this group is very tame as $p$ does not divide its order \cite[Corollary 2.11]{CarvajalSchwedeTuckerEtaleFundFsignature}, \cf \cite[Theorem F]{CarvajalFiniteTorsors}. The same cannot be true for general KLT $S$ by the (independent) examples of T.~Yasuda and B.~Totaro; see \cite{YasudaDiscrepanciesPcyclicQuotientVarieties,TotaroFailureKVforFanos} respectively.

It is worth observing that in dimension $2$ and characteristics $p>5$, KLT singularities and strongly $F$-regular singularities coincide; see \cite{HaconXuMMPPostiveChar,HaraDimensionTwo}. Therefore, if the characteristic is large enough $(p>5)$, the aforementioned problem is solved by \cite{CarvajalSchwedeTuckerEtaleFundFsignature}, \cf \cite{ArtinCoveringsOfTheRtionalDoublePointsInCharacteristicp,ArtinWildlyRamifiedZ2Actions}.\footnote{To the best of the authors's knowledge, this is still open if $p\in \{2,3,5\}$ and only known for canonical singularities using Artin--Lipman's explicit classification of rational double points \cite{ArtinCoveringsOfTheRtionalDoublePointsInCharacteristicp,LipmanRationalSingularities}. The problem is that we do not know whether canonical covers of log terminal surface singularities remain log terminal (when the characteristic does divide the Gorenstein index).} This is no longer the case in dimension $3$ as there exist non-strongly-$F$-regular but KLT threefold singularities; see \cite{CasciniTanakaWitaszekKLTdelPezzoNotFregular}.

In dimension $3$ and characteristic $p>5$, we recall that C.~Xu and L.~Zhang have demonstrated in \cite[Theorem 3.4]{XuZhangNonvanishing} that a certain tame quotient of $\pi_1^{\mathrm{\acute{e}t}}(S\smallsetminus\{x\})$ is finite. Their methods are for the most part identical to the ones in \cite{XuFinitenessOfFundGroups}, which can only handle the tame part. In this article, we complement Xu--Zhang's work on the matter by proving that $\pi_1^{\mathrm{\acute{e}t}}(S^{\circ})=\pi_1^{\mathrm{\acute{e}t}}(S\smallsetminus Z)$ is finite and tame for arbitrary $Z$. 
If $G$ is a profinite group, we denote by $G^{(p)}$ the prime-to-$p$ part of $G$, which is the inverse limit over all finite quotients of $G$ whose order is not divisible by $p$. Then, our main result is the following.

\begin{theoremA*}[{\autoref{thm.TamenessFundGroup}, \autoref{theo.mainresultbody}}]
Working in \autoref{setup}, assume $d = 3$ and $\kay$ to be algebraically closed of characteristic $p>5$. Then, the canonical quotient $\pi_1^{\mathrm{\acute{e}t}}(S^{\circ}) \twoheadrightarrow \pi_1^{\mathrm{\acute{e}t}}(S^{\circ})^{(p)}$ is an isomorphism between finite groups.
\end{theoremA*}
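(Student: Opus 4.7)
The theorem decomposes into two complementary claims: finiteness of the prime-to-$p$ quotient $\pi_1^{\mathrm{loc}}(S;Z)^{(p)}$, and triviality of the pro-$p$ part, \ie tameness of $\pi_1^{\mathrm{loc}}(S;Z)$. Finiteness of the tame quotient for the closed-point case $Z = \{\bar{x}\}$ is essentially the content of Xu--Zhang \cite[Theorem 3.4]{XuZhangNonvanishing}; the case of general $Z$ follows by a standard argument combining their result with the surjectivity of $\pi_1(S\smallsetminus Z) \to \pi_1(S \smallsetminus \{\bar{x}\})$ arising from $\codim_S Z \geq 2$. The new content to be proved is therefore tameness, to which the plan is devoted.

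To establish tameness, my plan is to prove the stronger statement that every finite unipotent torsor --- \'etale or infinitesimal --- over $S^\circ$ is trivial. This suffices because any nontrivial wild \'etale cover would, through the composition series of its $p$-Sylow subgroup, produce a nontrivial $\mathbb{Z}/p$-torsor, which is a particular unipotent torsor. The argument splits naturally into two steps. First, I would show that every finite unipotent torsor over the big open $S^\circ = S \smallsetminus Z$ extends to a finite unipotent torsor on all of $S$ (this is the second result announced in the abstract). Second, I would use a Koll\'ar component of $S$ to show $S$ admits no nontrivial finite unipotent torsor, forcing triviality of the original torsor.

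For the extension step, the key is to exploit local cohomology bounds afforded by the KLT hypothesis in positive characteristic. Since $\codim_S Z \geq 2$, coherent $S_2$-sheaves automatically extend across $Z$, but $p$-primary flat cohomology classes need not. I would feed the Artin--Schreier--Witt sequence (for the \'etale $\mathbb{Z}/p^n$ part) and the flat short exact sequences involving $\mu_{p^n}$ and $\alpha_{p^n}$ (for the infinitesimal factors) into the long exact sequence of local cohomology along $Z$. The extension problem then reduces to vanishing and surjectivity statements for modules such as $H^i_Z(S, \sO_S)$ and their analogues on natural index-one covers of $S$ --- quantities that become tractable using the KLT hypothesis and the positive-characteristic Kawamata--Viehweg-type vanishing available in dimension three in characteristic $p>5$.

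For the triviality step, I would extract a Koll\'ar component by choosing a plt blowup $f\colon Y \to S$ extracting a divisor $E$ with $-E$ being $f$-ample; such a blowup exists for any KLT threefold singularity in characteristic $p>5$ by the MMP machinery now available in this range. Adjunction exhibits $(E, \diff_E)$ as a KLT log Fano surface pair, and since KLT surface singularities are strongly $F$-regular in characteristic $p>5$ \cite{HaraDimensionTwo}, the local rings of $E$ are strongly $F$-regular. A nontrivial finite unipotent torsor on $S$, after pullback to $Y$, would restrict to a nontrivial finite unipotent torsor on some \'etale neighborhood of a point of $E$, contradicting the fact from \cite{CarvajalSchwedeTuckerEtaleFundFsignature, CarvajalFiniteTorsors} that strongly $F$-regular local rings admit no nontrivial finite unipotent torsors. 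The principal obstacle will be the extension step: wild torsors do not in general extend over codimension-two subsets, so this step genuinely forces a quantitative use of the KLT input that has no counterpart in classical prime-to-$p$ or coherent arguments.
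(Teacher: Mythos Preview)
There are two genuine gaps in your plan.

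\textbf{Finiteness.} Your reduction to $Z=\{\bar x\}$ is backwards. The open immersion $S\smallsetminus Z\hookrightarrow S\smallsetminus\{\bar x\}$ induces a surjection $\pi_1(S\smallsetminus Z)\twoheadrightarrow\pi_1(S\smallsetminus\{\bar x\})$, but a surjection onto a finite group tells you nothing about finiteness of the source. The paper's remark after Theorem~A explains the correct reduction: one uses that the finitely many singular points of the KLT surface $S\smallsetminus\{\bar x\}$ have finite local fundamental groups, and passes to a quasi-\'etale cover to trivialise those; this is not a formal consequence of surjectivity. In the paper the finiteness for general $Z$ is in fact proved directly, without reducing to the closed point, by working on the generalized PLT blowup $\pi\colon(T,E)\to S$ and studying $\pi_1^{\mathrm{t},E}(U)$ via the tame local-to-global machinery of \autoref{thm.FundamentalTheorem}, ultimately bounding totally ramified covers by prime-to-$p$ torsion in $\Cl\tilde S$ (the appendix).

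\textbf{Tameness.} Your two-step plan collapses: since $R$ is strictly henselian, $H^1(S_{\mathrm{fl}},G)=*$ for every finite group-scheme $G$, so the ``triviality step'' is vacuous and the Koll\'ar component plays no role there. All the content lies in the extension step, and that is precisely where the paper \emph{does} use the generalized PLT blowup. The argument is not a Kawamata--Viehweg-type vanishing on $S$; rather one writes the Leray spectral sequence for $S^\circ\xrightarrow{j}T\xrightarrow{\pi}S$, uses that $(T,E)$ is purely $F$-regular to kill $\bigl(R^1j_*\sO_{S^\circ}\bigr)^F$ via \autoref{pro.FrefVanishingCohSupport}, and reduces the remaining term $\bigl(R^1\pi_*j_*\sO_{S^\circ}\bigr)^F$ via Artin--Schreier and proper base change to $H^1(E_{\text{\'et}},\bZ/p\bZ)$, which vanishes because the log del Pezzo $E$ is algebraically simply connected (\autoref{thm.FundamentalGrouoDelPezzo}). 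Note also that you aim to trivialise \emph{all} unipotent torsors, but in the paper that stronger statement (Theorem~C) needs rationality of $S$ as an extra hypothesis; for tameness of $\pi_1^{\mathrm{loc}}$ only the \'etale case $G=\bZ/p\bZ$ is required, and this is what \autoref{thm.Tameness} delivers without rationality.
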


\begin{remark}
By the purity of the branch locus theorem, we may always assume that $Z$ cuts out the singular locus of $S$. Further, since $S^{\circ }$ is a $2$-dimensional KLT scheme (of characteristic $p>5$), it has finitely many singular points which all have finite fundamental group. Hence, we may find a Galois quasi-\'etale cover $S'=\Spec R' \to S$ such that every quasi-\'etale cover over the punctured spectrum of $R'$ is \'etale. In other words, we may also assume that $Z=\{x\}$. However, we do not do this here as we want our methods to generalize to higher dimensions.
\end{remark}

Just as in \cite{BhattCarvajalRojasGrafSchwedeTucker}, we obtain the following application.

\begin{theoremB*}[{\autoref{cor.DirectCorollary}, \autoref{cor.FinitenessAndTamenessOfFundGroupsOfBigOpens}}]
Working in \autoref{setup}, assume $d = 3$ and $\kay$ to be algebraically closed of characteristic $p>5$. Then, every quasi-\'etale cover over $X$ whose degree is a power of $p$ is \'etale. Moreover, there exists a quasi-\'etale Galois cover $\tilde{X} \to X$ with prime-to-$p$ degree such that every quasi-\'etale cover over $\tilde{X}$ is \'etale. Moreover, if $(X,\Delta)$ is log Fano and $U \subset X$ is a big open then $\pi_1^{\mathrm{\acute{e}t}}(U)$ is finite and its order is prime-to-$p$.
\end{theoremB*}

Given a scheme $X$, we denote by $X_{\mathrm{fl}}$ the (small) fppf site of $X$, \ie the flat topology on $X$ \cite{MilneEtaleCohomology}. In particular, $H^1(X_{\mathrm{fl}},G)$ represents the isomorphism classes of (fppf) $G$-torsors over $X$, where $G$ is a group-scheme over $X$. See\cite[III, \S4]{MilneEtaleCohomology} for further details. If $\pi_1^{\mathrm{\acute{e}t}}(S^{\circ}) \twoheadrightarrow \pi_1^{\mathrm{\acute{e}t}}(S^{\circ})^{(p)}$ is an isomorphism, the restriction map of torsors $H^1(S_{\mathrm{fl}}, G) \to H^1(S^{\circ}_{\mathrm{fl}},G)$ is surjective for all finite \'etale unipotent group-schemes $G/\kay$ (i.e. for the $p$-groups). Inspired by \cite[Theorem F]{CarvajalFiniteTorsors}, we push this further by using the rationality hypothesis.
\begin{theoremC*}[\autoref{cor.fulltameness}]
Working in \autoref{setup}, assume $d = 3$, and $\kay$ to be algebraically closed of characteristic $p>5$. If $S$ is rational, then the restriction map of torsors
\[
\varrho^1_S(G) \: H^1(S_{\mathrm{fl}}, G) \to H^1(S^{\circ}_{\mathrm{fl}},G)
\]
is surjective for all finite unipotent group-schemes $G/\kay$.
\end{theoremC*}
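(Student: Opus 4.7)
The plan is to dévissage $G$ to the simple finite unipotent group schemes $\mathbb{Z}/p$ and $\alpha_p$ over $\kay$, and then handle each case using \autoref{thm.TamenessFundGroup} for $\mathbb{Z}/p$ and the rationality hypothesis for $\alpha_p$.

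For the dévissage, I would argue by induction on the order of $G$. If $G$ is nontrivial, take $G' \subset G$ to be the center $Z(G)$ if $G$ is non-abelian, or any simple subgroup scheme (necessarily $\mathbb{Z}/p$ or $\alpha_p$) if $G$ is abelian; in either case $G'$ is abelian and central in $G$, and both $G'$ and $G'' \coloneqq G/G'$ are finite unipotent of strictly smaller order. The functorial six-term exact sequence of pointed sets
\[
\cdots \to H^1_{\mathrm{fl}}(X, G') \to H^1_{\mathrm{fl}}(X, G) \to H^1_{\mathrm{fl}}(X, G'') \to H^2_{\mathrm{fl}}(X, G')
\]
(with $H^2$ making sense since $G'$ is abelian), together with the induced diagram comparing $X = S$ with $X = S^{\circ}$, reduces via the standard pointed-set chase the surjectivity of $\varrho^1_S(G)$ to (i) surjectivity of $\varrho^1_S(G')$, (ii) surjectivity of $\varrho^1_S(G'')$, and (iii) injectivity of $\varrho^2_S(G')$. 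Conditions (i) and (ii) come from the induction hypothesis. For (iii), I would observe that $H^2_{\mathrm{fl}}(S, G') = 0$: the fppf resolutions $0 \to \alpha_p \to \mathbb{G}_a \xrightarrow{F} \mathbb{G}_a \to 0$ and $0 \to \mathbb{Z}/p \to \mathbb{G}_a \xrightarrow{F-1} \mathbb{G}_a \to 0$, combined with the vanishing $H^{\geq 1}(S, \mathbb{G}_a) = 0$ since $S$ is affine, force $H^2_{\mathrm{fl}}(S, \alpha_p) = H^2_{\mathrm{fl}}(S, \mathbb{Z}/p) = 0$, and this propagates to every commutative finite unipotent $G'$ by further LES. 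The problem thus reduces to the two atomic cases.

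The $\mathbb{Z}/p$-case is essentially Theorem A: since $\pi_1^{\mathrm{loc}}(S;Z)$ has no $p$-part, $H^1_{\mathrm{et}}(S^{\circ}, \mathbb{Z}/p) = \Hom_{\mathrm{cts}}(\pi_1^{\mathrm{loc}}(S;Z), \mathbb{Z}/p)$ is trivial, while $H^1_{\mathrm{et}}(S, \mathbb{Z}/p) = 1$ by strict henselianness of $R$. For the $\alpha_p$-case, the sequence $0 \to \alpha_p \to \mathbb{G}_a \xrightarrow{F} \mathbb{G}_a \to 0$ yields, for $X \in \{S, S^{\circ}\}$,
\[
0 \to H^0(X, \mathcal{O}_X)/F \to H^1_{\mathrm{fl}}(X, \alpha_p) \to \ker\bigl(F \colon H^1(X, \mathcal{O}_X) \to H^1(X, \mathcal{O}_X)\bigr) \to 0.
\]
Rationality of $S$ with $p > 5$ forces $R$ to be Cohen–Macaulay of depth three, whence $H^1(S, \mathcal{O}_S) = 0$; and, reducing to $Z = \{x\}$ via the remark following Theorem A, also $H^1(S^{\circ}, \mathcal{O}_{S^{\circ}}) \cong H^2_{\{x\}}(R) = 0$ by depth. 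Both flat cohomology groups collapse to $R/F(R)$ and $\varrho^1_S(\alpha_p)$ is the identity.

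The main obstacle is precisely the $\alpha_p$-step, which is where the rationality hypothesis does its real work: without Cohen–Macaulayness the Frobenius could act with nonzero kernel on $H^1(S^{\circ}, \mathcal{O}_{S^{\circ}})$ and produce $\alpha_p$-torsors on $S^{\circ}$ with no lift to $S$. Moreover, if one insists on arbitrary $Z$ of codimension two instead of reducing to $Z = \{x\}$, then $H^1(S^{\circ}, \mathcal{O}) \cong H^2_Z(R)$ need not vanish, and one must invoke Grauert–Riemenschneider vanishing on a resolution $f \colon Y \to S$ together with Grothendieck duality to control the Frobenius on $H^2_Z(R)$; both ingredients are available for KLT threefolds with $p > 5$, making the argument go through in full generality. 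The dévissage itself is routine once one notes the key vanishing of $H^{\geq 1}(S, \mathbb{G}_a)$ coming from $S$ being affine.
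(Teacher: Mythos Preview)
Your d\'evissage to the simple pieces $\bZ/p$ and $\bm{\alpha}_p$ is precisely the ``formal consequence'' the paper has in mind (it defers to \cite[\S4.3]{CarvajalFiniteTorsors}), and your handling of $\bZ/p$ via Theorem~A is equivalent to the paper's use of \autoref{thm.Tameness}. The genuine gap is the $\bm{\alpha}_p$ step for general $Z$. The remark you invoke after Theorem~A concerns only \emph{\'etale} covers: Zariski--Nagata purity and the passage to a quasi-\'etale cover $S'\to S$ are statements about $\pi_1^{\mathrm{loc}}(S;Z)$, and neither lets you extend an infinitesimal $\bm{\alpha}_p$-torsor across $Z\smallsetminus\{x\}$. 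So the identification $H^1(S^{\circ},\sO_{S^{\circ}})\cong H^2_{\{x\}}(R)$ is unjustified, and what you actually need is $H^2_Z(S,\sO_S)^{\perp F}=0$ for the given $Z$---exactly the content of \autoref{thm.UnipotentTameness}. Your fallback (Grauert--Riemenschneider on a resolution plus Grothendieck duality) is not how the paper proceeds and, as stated, is too vague to constitute a proof.

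The paper establishes $H^2_Z(S,\sO_S)^{\perp F}=0$ through the generalized PLT blowup $\pi\colon T\to S$: one reduces via the Leray spectral sequence (using $R^k\pi_*\sO_T=0$, which needs rationality) to $H^2_{\pi^{-1}Z}(T,\sO_T)^{\perp F}=0$, splits $\pi^{-1}Z=Z'\cup E$ by Mayer--Vietoris, and kills each piece using strong $F$-regularity of $T$ (for $Z'$ and the points $Z'\cap E$, via \autoref{pro.FrefVanishingCohSupport}) together with $H^2_E(T,\sO_T)\cong H^2_{\fram}(R)=0$, the second place where Cohen--Macaulayness enters. A more direct salvage of your intuition would be the long exact sequence for the closed pair $\{x\}\subset Z$: rationality gives $H^2_{\{x\}}(R)=0$, so $H^2_Z(R)$ injects into $\bigoplus_i H^2_{\p_i}(R_{\p_i})$ over the height-$2$ primes $\p_i$ of $Z$, and each summand has vanishing $(-)^{\perp F}$ because the $R_{\p_i}$ are $2$-dimensional KLT in characteristic $p>5$ and hence $F$-pure. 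That argument works, but it is not the one you gave.
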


The problem of the rationality of KLT singularities in positive characteristics has been of great interest in recent years, where several examples of non-rational KLT singularities have been found.\footnote{In fact, not Cohen--Macaulay. However, for KLT singularities, Cohen--Macaulayness and rationality are equivalent notions; see \cite{KovacsRationalSingularities}.} There are essentially two known ways to construct such counterexamples. On one hand, we have the examples obtained from (by taking affine cones over) counterexamples of Kodaira or Kawamata--Viehweg vanishing theorems; \eg \cite{GongyoNakamuraTanakaRationalPointsFanoThreefoldsFiniteFields,CasciniTanakaPLTThreefoldsNonNormalCenters,KovacsNonCMCanonicalSingularities,BernasconiKVFailsLogDelPezzo,TotaroFailureKVforFanos}. On the other hand, we have examples constructed by taking quasi-\'etale $\bZ/p\Z$-quotients of regular germs; see \cite{YasudaDiscrepanciesPcyclicQuotientVarieties,YasudaPcyclicMackayCorrespondenceMotivicIntegration,TotaroFailureKVforFanos}. In all these examples, either the dimension is large or the characteristic is small, which is predicted by \cite{HaconWitaszekRationalityKawamataLogTerminalPosChar} where it is shown that KLT threefold singularities are rational in sufficiently large characteristics. Further, in Yasuda's construction of certain quasi-\'etale $\bZ/p\Z$-quotients $\hat{\bA}_{\kay}^d \to S$, we see that $S$ cannot be simultaneously rational (i.e. Cohen--Macaulay) and KLT.\footnote{In fact, in dimension $3$, these are never KLT.} In those examples, $\pi_1^{\mathrm{\acute{e}t}}(S) = \bZ/p\bZ$ as $\hat{\bA}_{\kay}^d \to S$ is a univeral Galois cover. Our theorem goes further by establishing that KLT threefold singularities in characteristic $p>5$ cannot be realized as wild \'etale quotients of regular germs. We also make the following observation.

\begin{theoremD*}[\autoref{cor.FrationalityImpliesRational}]
Working in \autoref{setup}, assume $d = 3$ and $\kay$ to be algebraically closed of characteristic $p>5$. If $S$ is $F$-injective then it is rational.
\end{theoremD*}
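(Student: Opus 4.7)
The strategy is to reduce via \autoref{theo.mainresultbody} to a cover with trivial local \'etale fundamental group, show that cover is Gorenstein, invoke the equivalence $F$-pure $=$ $F$-injective in the Gorenstein case, and then conclude Cohen-Macaulayness from a hyperplane-section reduction to dimension two, descending the conclusion through the cover.

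First, by \autoref{thm.TamenessFundGroup} and \autoref{theo.mainresultbody}, $\pi_1^{\mathrm{loc}}(S)$ is finite and tame, so there exists a Galois quasi-\'etale cover $\tilde{S} = \Spec \tilde{R} \to S$ of prime-to-$p$ degree with no further nontrivial quasi-\'etale cover (the ``tame universal'' cover). Tame quasi-\'etale covers preserve the KLT property, so $\tilde{S}$ is KLT. Since the degree of the cover is a unit in $R$, the trace map splits the inclusion $R \hookrightarrow \tilde{R}$ as $R$-modules, which (using that Frobenius commutes with finite morphisms) both transfers $F$-injectivity from $R$ up to $\tilde{R}$ and later lets Cohen-Macaulayness of $\tilde{R}$ descend back to $R$. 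Thus it suffices to establish rationality of $\tilde{S}$.

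Second, I would show $\tilde{S}$ is Gorenstein. The absence of nontrivial quasi-\'etale covers of $\tilde{S}$ kills the prime-to-$p$ torsion of $\Cl(\tilde{S})$, so the Cartier index of $K_{\tilde{S}}$ is a power of $p$; the genuine use of $F$-injectivity enters in excluding the $p$-part. A nontrivial $\mu_{p^k}$-torsion class of the canonical would give rise to an infinitesimal torsor on the punctured spectrum $\tilde{S}^\circ$, and one argues---via the Kummer / Artin--Schreier--Witt exact sequences in flat cohomology and the Frobenius-equivariance of local cohomology---that $F$-injectivity on $\tilde{R}$ obstructs the nontriviality of the associated extension class. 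Hence $K_{\tilde{S}}$ is Cartier, so $\tilde{S}$ is Gorenstein KLT, \emph{i.e.}, canonical, and $F$-injective is then equivalent to $F$-pure by local duality.

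Finally, a Gorenstein canonical $F$-pure threefold singularity in characteristic $p>5$ is Cohen-Macaulay: a general hyperplane section is a normal Gorenstein $F$-pure canonical surface singularity, which by \cite{HaraDimensionTwo} is a rational double point in characteristic $p>5$ and hence strongly $F$-regular, in particular Cohen-Macaulay of depth $2$. A Bertini-style depth-lifting argument then promotes the depth of $\tilde{R}$ to $3$. Descending through the split $R \hookrightarrow \tilde{R}$ yields Cohen-Macaulayness of $R$, and together with \cite{KovacsRationalSingularities} rationality of $S$ follows. The chief obstacle is the middle step of showing $\tilde{S}$ is Gorenstein, \emph{i.e.}, ruling out a $p$-power Cartier index: here the $F$-injective hypothesis must be used substantively to forbid infinitesimal torsors of the canonical class, and this interplay between the Frobenius action on local cohomology and the wild birational geometry of KLT threefold singularities in small characteristic is the crux of the argument.
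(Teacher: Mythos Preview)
Your strategy is far more elaborate than what is needed, and it contains a genuine gap at the point you yourself flag as ``the crux.'' The paper's proof is a three-line computation on $H^2_{\fram}(R)$ directly: by \autoref{lem.FinitegenerationH^2} this module is a finite-dimensional $\kay$-vector space, so by \autoref{th.KeyDecomposition} it splits as a $\tau$-module into a nilpotent part and a semisimple part (on which Frobenius is bijective, with an $\bF_p$-basis of $F$-fixed vectors since $\kay$ is algebraically closed). \autoref{thm.Tameness} gives $H^2_{\fram}(R)^F=0$, killing the semisimple part; $F$-injectivity gives $H^2_{\fram}(R)^{\perp F}=0$, killing the nilpotent part. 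Hence $H^2_{\fram}(R)=0$, so $R$ is Cohen--Macaulay, which for KLT is rational by \cite{KovacsRationalSingularities}. No covers, no Gorenstein reduction, no Bertini.

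Your route has two real problems. First, the ascent of $F$-injectivity from $R$ to $\tilde{R}$ along a quasi-\'etale prime-to-$p$ cover is not a formal consequence of the trace splitting: the splitting $\tilde{R}\cong R\oplus M$ is $R$-linear, but the Frobenius on $\tilde{R}$ does not respect this decomposition, so injectivity of $F$ on $H^i_{\fram}(R)$ does not obviously yield injectivity on $H^i_{\tilde{\fram}}(\tilde{R})$. Second, and more seriously, your sketch for showing $\tilde{S}$ is Gorenstein is not an argument. You assert that $F$-injectivity ``obstructs the nontriviality'' of a $\bm{\mu}_{p^k}$-torsor realizing a $p$-power index of $K_{\tilde{S}}$, but no mechanism is given: the Kummer sequence for $\bm{\mu}_{p^k}$ involves $\bG_m$ and multiplicative cohomology, not the additive local cohomology on which $F$-injectivity is a hypothesis, and Artin--Schreier--Witt theory concerns $\bZ/p^k\bZ$ and $\bm{\alpha}_p$-type groups, not $\bm{\mu}_{p^k}$. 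Without this step your chain collapses. The moral is that the $F$-injectivity hypothesis is already a statement about the Frobenius action on $H^2_{\fram}(R)$, and the vanishing $H^2_{\fram}(R)^F=0$ from \autoref{thm.Tameness} is exactly the complementary statement; combining them \emph{is} the proof.
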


\begin{remark} \label{rem.Rationality}
When a first version of this work was made public, \cite{ArvidssonBernasconiLaciniKVVforLogDelPezzosp>5} was not available. In that paper, rationality of KLT threefold singularities is established in characteristic $p>5$. This makes, for example, Theorem D obsolete and together with Theorem C answers \autoref{3rd.question}; see \cite{ArvidssonBernasconiLaciniKVVforLogDelPezzosp>5}. However, we have decided to preserve our original paper as much as possible in this second version. In particular, we keep emphasizing what type of results need rationality and which need not. On the other hand, by using the \autoref{sec.Appendix} kindly provided by J\'anos~Koll\'ar, we are able to remove the use of rationality from our original arguments regarding finiteness of the fundamental group.
\end{remark}

\subsection{Outline of the paper}
Throughout this work, we follow the ideas in \cite{HaconWitaszekRationalityKawamataLogTerminalPosChar} to study KLT threefold singularities in characteristic $p>5$ by strongly $F$-regular ones via the so-called generalized PLT blowups. Note that all theorems we have claimed in this introduction are true for strongly $F$-regular singularities. What we do for the most part of this paper is to prove that the same properties can be transferred across generalized PLT blowups which; loosely speaking, is what Hacon--Witaszek do in \cite{HaconWitaszekRationalityKawamataLogTerminalPosChar} for rationality. 

In \autoref{sec.preliminaries}, we investigate the problem of (unipotent) tameness. This problem has to do with the action of Frobenius on (local) cohomology. Therefore, what we do in \autoref{sec.preliminaries} is to compare the action of Frobenius on cohomology with supports along a generalized PLT blowup. 

With tameness in place, we prove finiteness of the fundamental group in \autoref{sec.finiteness}. Our proof starts identical to Xu's argument in \cite{XuFinitenessOfFundGroups} (which was later mimicked in \cite[\S 3]{XuZhangNonvanishing}). That is, we start off by considering a PLT blowup $(T,E) \to (S,x)$ at our KLT point $x$ to extract a Koll\'ar component $E$. Then, the tame fundamental group of $T$ with respect to $E$; say $\pi_1^{\mathrm{t},E}(T)$,  can be used to study tame quasi-\'etale covers over $S$. More precisely, we use Grothendieck--Murre's \cite[Corollary 9.8]{GrothendieckMurreTameFundamentalGroup}; see \autoref{cla.GrotehndieckMurre}, to establish an isomorphism between $\pi_1^{\mathrm{t},E}(T)^{(p)}$ and $\pi_1^{\mathrm{\acute{e}t}}(S)^{(p)}$. From here, both approaches diverge significantly. On one hand, Xu--Zhang use the Minimal Model Program in positive characteristics to study finiteness of $\pi_1^{\mathrm{t},E}(T)^{(p)}$. Our approach, however, follows the philosophy in \cite{HaconWitaszekRationalityKawamataLogTerminalPosChar} in the sense that we rather exploit that $(T,E)$ has purely $F$-regular singularities.
More precisely, we use that $(T,E)$ has purely $F$-regular singularities to prove that $\pi_1^{\mathrm{t},E}(T)^{(p)}$ is finite via a local-to-global argument and Galois-theoretic considerations. The local part is the content of the authors' former preprint \cite{CarvajalRojasStaeblerTameFundamentalGroupsPurePairs} whereas we explain the local-to-global methods in \autoref{sec.LocalToGlobal}. In the end, just as in \cite{XuFinitenessOfFundGroups} and \cite{XuZhangNonvanishing}, we are left with studying the structure of totally ramified tame covers over $(T,E)$. We do this by examining the local structure of these covers via our singular Abhyankar's lemma in \cite{CarvajalRojasStaeblerTameFundamentalGroupsPurePairs}. This reduces the problem to study the finiteness of the prime-to-$p$ torsion of the divisor class group of $S$. This may be easily obtained by using \cite{BoutotSchemaPicardLocal} under the rationality hypothesis. For the reader's convenience, we provide details in \autoref{sec.LocalPicardSchemes}. However, this has been substantially improved by J\'anos Koll\'ar in \autoref{sec.Appendix}, where the rationality hypothesis is not employed directly.

\begin{convention}
In this paper, all schemes and rings are defined over $\mathbb{F}_p$. We denote the $e$-th iterate of the Frobenius endomorphism by $F^e \: X \to X$. We assume all our schemes and rings to be $F$-finite, noetherian, and so excellent.
\end{convention}

\subsection*{Acknowledgements} 
 We would like to thank Emelie Arvidsson, Manuel Blickle, Zsolt Patakfalvi, Thomas Polstra, Karl Schwede, Jakub Witaszek, and Maciej Zdanowicz for very useful discussions and help throughout the preparation of this preprint. We are very grateful to Thomas Polstra for pointing out some mistakes on an earlier draft of this work and for helping us correcting them. We thank Maciej Zdanowicz for his help in filling some gaps in some of our early arguments. We are particularly thankful to Jakub Witaszek whose ideas inspired the authors to work on this project and for his sincere interest on previous drafts of this preprint which he kindly helped to improve and correct. In fact, it was a conversation with him after the first named author's thesis defense which motivated the whole project. We also thank the anonymous referee for many valuable comments and corrections.
 
\section{Generalized PLT blowups and Tameness} \label{sec.preliminaries}
We commence by recalling the definition of Koll\'ar components and PLT blowups; see \cite[\S 1.1]{LiXuStabilityofValuationsAndKollarComponents}, \cite[Lemma 1]{XuFinitenessOfFundGroups}. In dimension $3$ and positive characteristic $p>5$, we also recall the refined notion of generalized PLT blowups, which, roughly speaking, can be used to approximate KLT threefold singularities by strongly $F$-regular ones. After this, we explain why this approximation is good enough to impose tameness conditions on those KLT singularities.

\subsection{Generalized PLT blowups} Consider the following definition.

\begin{definition}[PLT blowups and Koll\'ar components] \label{def.KollarComponent}
Let $X$ be a variety and let $x \in X$ be a closed point. One says that a proper birational morphism $f \: Y \to X$ from a log pair $(Y,\Delta_Y)$ is a \emph{PLT blowup at $x$} if the following conditions hold:
\begin{enumerate}
    \item $f$ induces an isomorphism over $X \smallsetminus \{x\}$,
    \item $E \coloneqq f^{-1}(x)$ is a normal prime divisor on $Y$ such that $(Y,E+\Delta_Y)$ is a PLT log pair,
    \item  $-(K_Y+E+\Delta_Y)$ is a $\mathbb{Q}$-Cartier $f$-ample divisor, and
    \item $-E$ is a $\mathbb{Q}$-Cartier $f$-nef divisor.
\end{enumerate}
The exceptional divisor $E$ of a PLT blowup $f \: Y \to X$ is called a \emph{Koll\'ar component} of $X$ at the closed point $x$.
\end{definition}

\begin{remark}
\label{rem.logfano}
Let $X$ be variety, $x \in X$ be a closed point, and $f \: Y \to X$ be a PLT blowup extracting a Koll\'ar component $E$. Further, if we set an equality
\[
(K_Y+ E +\Delta_Y)\big|_E = K_E + \Delta_E
\]
given by adjunction \cite[Chapter 4]{KollarSingulaitieofMMP}. Then, one observes that $(E,\Delta_E)$ is a KLT pair (\cite[Lemma 4.8]{KollarSingulaitieofMMP} using property (b)) and $-(K_E+\Delta_E)$ is ample by property (c). In other words, $(E,\Delta_E)$ is a log Fano pair. We may often refer to this pair as a Koll\'ar component of $X$ at $x$ as well. Furthermore, since $-(K_Y+E+\Delta_Y)$ is ample over $X$ and $(Y,E+\Delta_Y)$ is PLT, we have that $(X,f_*\Delta_Y)$ is KLT; see for instance \cite[Corollary 3.44]{KollarMori}.
\end{remark}

Regarding the existence of PLT blowups and Koll\'ar components, we may say the following. For further details, see \cite{PrhhorovBlowupsCanonicalSIngularities,XuFinitenessOfFundGroups,GongyoNakamuraTanakaRationalPointsFanoThreefoldsFiniteFields,HaconWitaszekMMPLowCharacteristic,HaconWitaszekMMPThreefoldsChar5}.
\begin{theorem} \label{thm.ExistencePLTBlowUP}
Let $x\in (X,\Delta)$ be a KLT closed point of a log pair of dimension $d\geq 2$ and characteristic $p$. Then, a Koll\'ar component at $x$ exists if either $p=0$ or $d \leq 3$.
\end{theorem}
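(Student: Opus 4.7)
The plan is to construct the Koll\'ar component by running a carefully chosen relative MMP over $X$ that contracts all exceptional divisors over $x$ save one. First, take a log resolution $\pi \colon W \to X$ of $(X,\Delta)$ together with the closed point $x$; this exists for arbitrary $d$ if $p=0$, and for $d\leq 3$ by the work on resolution of singularities in low dimension (Cossart--Piltant). Let $E_1,\dots,E_n$ be the prime $\pi$-exceptional divisors with center containing $x$, and write
\[
K_W + \widetilde{\Delta} = \pi^*(K_X+\Delta) + \sum_i a_i\, E_i,
\]
where $\widetilde{\Delta}$ is the strict transform of $\Delta$ and each $a_i > -1$ by the KLT hypothesis. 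Reorder so that $a_1 = \min_i a_i$. The divisor $E_1$ is the candidate for the Koll\'ar component.

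Next, choose $0<\delta\ll 1$ and set
\[
\Gamma := \widetilde{\Delta} + E_1 + \sum_{i\geq 2} (-a_i-\delta)\, E_i.
\]
Then $(W,\Gamma)$ is PLT with $\lfloor\Gamma\rfloor = E_1$, and $K_W+\Gamma \equiv_\pi -\delta\sum_{i\geq 2}E_i + (\text{effective, exceptional})$. Run a $(K_W+\Gamma)$-MMP over $X$ with scaling by a $\pi$-ample divisor. Because $E_1$ has the minimal discrepancy, a standard negativity-lemma argument shows the $E_i$ with $i\geq 2$ are in the exceptional locus of this MMP, while $E_1$ is never contracted (it is a log canonical place of $(W,\Gamma)$ dominating the $\pi$-image $\{x\}$). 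After finitely many steps one obtains $f\colon Y\to X$ extracting exactly $E := $ the birational transform of $E_1$, with $(Y,E+\Delta_Y)$ PLT, where $\Delta_Y = f^{-1}_*\Delta$. Conditions (a) and (b) of \autoref{def.KollarComponent} follow immediately; condition (c) follows because, by construction, $-(K_Y+E+\Delta_Y)$ is the pushforward of $-\delta\sum_{i\geq 2}E_i$ perturbed by $\pi$-trivial classes, and one can arrange $f$-ampleness by a final ample perturbation in the MMP with scaling. For (d), one runs an additional $-E$-MMP over $X$ (or equivalently appeals to the relative cone theorem for the PLT pair $(Y,E+\Delta_Y)$) to reach a model on which $-E$ is $f$-nef; such a step does not contract $E$ precisely because $E$ has coefficient $1$ in the PLT boundary.

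The main obstacle is the existence and termination of the required relative MMP in positive characteristic. In characteristic $0$, all of the above is classical (essentially Prokhorov's construction, which was also used in \cite{XuFinitenessOfFundGroups}). In dimension $d=3$ and $p>5$, both the existence of log resolutions and the termination of the $(K_W+\Gamma)$-MMP with scaling require the threefold MMP machinery of Hacon--Xu, Birkar, Hacon--Witaszek, and Waldron; the assumption $p>5$ enters precisely through these MMP results (in particular through Koll\'ar-type vanishing in characteristic $p$). The cases $d=2$ in any characteristic are immediate from the classification of log terminal surface singularities and the existence of a minimal resolution dominating the extraction of any prime divisor with the smallest discrepancy. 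Once the MMP steps are available, the verification of the four items is formal, so the content of the theorem lies essentially in invoking the right MMP termination result in the stated dimension and characteristic range.
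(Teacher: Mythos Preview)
The paper does not give a proof of this theorem; it simply cites the literature (Prokhorov, Xu, Gongyo--Nakamura--Tanaka, and Hacon--Witaszek). Your sketch follows the correct general strategy from those references---extract a divisor by running a relative MMP from a log resolution---but several steps are incorrect as written.

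First, your boundary $\Gamma$ is malformed. You assign $E_i$ (for $i\geq 2$) the coefficient $-a_i-\delta$, but KLT only gives $a_i>-1$, and many exceptional divisors on a log resolution satisfy $a_i\geq 0$ (e.g.\ those arising from smooth centers). Hence $-a_i-\delta$ can be negative and $(W,\Gamma)$ is not a log pair. The standard construction (as in Xu's Lemma~1 or \cite[Lemma~1]{XuFinitenessOfFundGroups}) instead puts coefficient $1$ on \emph{all} exceptional divisors, runs the dlt MMP for $K_W+\widetilde\Delta+\sum E_i\equiv_X\sum(1+a_i)E_i\geq 0$, and uses the negativity lemma to analyze the output; the surviving divisor is not, in general, the one of minimal discrepancy on the chosen resolution, so your ``reorder so that $a_1=\min_i a_i$'' heuristic is not the mechanism that singles out the Koll\'ar component.

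Second, the theorem asserts existence for $d\leq 3$ in \emph{every} positive characteristic, whereas you only invoke the threefold MMP for $p>5$. The paper's references to \cite{HaconWitaszekMMPLowCharacteristic,HaconWitaszekMMPThreefoldsChar5} are there precisely to cover $p\in\{2,3,5\}$; without them your argument does not prove the stated result.

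Finally, your treatment of conditions (c) and (d) is looser than necessary. Once the MMP produces a $\mathbb{Q}$-factorial $Y$ extracting a single irreducible divisor $E$ over $x$, the relative Picard number over a neighborhood of $x$ is $1$; since $E$ is effective and exceptional, $-E$ is automatically $f$-ample (not merely $f$-nef), and $K_Y+E+\Delta_Y\equiv_f (1+a(E;X,\Delta))\,E$ with $1+a(E;X,\Delta)>0$ gives $f$-ampleness of $-(K_Y+E+\Delta_Y)$ directly. No ``additional $-E$-MMP'' or ``final ample perturbation'' is needed.
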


The following result is crucial in this work. It establishes that for KLT threefolds in characteristic $p >5$ PLT blowups can even be arranged to be ``purely $F$-regular blowups.'' This result is extracted from \cite[Propositions 2.8 and 2.11]{HaconWitaszekRationalityKawamataLogTerminalPosChar}.

\begin{theorem}[Generalized PLT blowups] \label{thm.ExistencePFTRblowUP}
Let $(X, \Delta)$ be a KLT threefold over an algebraically closed field $\kay$ of characteristic $p > 5$ and $x \in X$ a closed point. Then, for a suitable open neighborhood $U$ of $x$, there exists a $\mathbb{Q}$-factorial PLT blowup $f\: (Y,\Delta_Y) \to U$ centered at $x$ extracting a Koll\'ar component $(E,\Delta_E) \subset Y$ such that $(Y,E)$ is a purely $F$-regular log pair.  Moreover, if $\Delta$ has standard coefficients then so does $(E,\Delta_E)$ and so $(Y,E+\Delta_Y)$ is purely $F$-regular.
\end{theorem}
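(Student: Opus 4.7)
The plan is to assemble the statement from standard MMP machinery in dimension three for $p>5$ together with inversion of adjunction for $F$-singularities, mirroring the strategy of \cite{HaconWitaszekRationalityKawamataLogTerminalPosChar}. As a first step, I invoke \autoref{thm.ExistencePLTBlowUP} to obtain \emph{some} PLT blowup $g\: (Y_0,\Delta_{Y_0}) \to U$ extracting a Koll\'ar component $E_0$ (possibly after shrinking $U$). Since $Y_0$ need not be $\mathbb{Q}$-factorial, I then take a small $\mathbb{Q}$-factorialization of $Y_0$, which is available in this setting via the threefold MMP of Hacon--Witaszek--Xu in char $p>5$, and run a relative MMP over $U$ for $-E_0$ (or an equivalent auxiliary divisor) to contract any exceptional components other than the birational transform of $E_0$. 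The outcome is a $\mathbb{Q}$-factorial model $Y\to U$ that still extracts a single divisor $E$ (the birational transform of $E_0$) and still satisfies the nef/ample requirements (c)--(d) of \autoref{def.KollarComponent}; the PLT property is preserved along these steps by standard negativity lemma arguments.

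The heart of the argument is upgrading ``PLT'' to ``purely $F$-regular'' for the pair $(Y,E)$. I reduce this to two-dimensional theory along $E$. By adjunction (\autoref{rem.logfano}), $(E,\Delta_E)$ is a KLT log Fano surface over $\kay$. In characteristic $p>5$, two-dimensional KLT pairs coincide with strongly $F$-regular ones by Hara and Hacon--Xu (as already noted in the introduction). Hence $(E,\Delta_E)$ is globally $F$-regular. The key input is then an inversion-of-adjunction type statement for $F$-singularities (Das, Schwede--Tucker, Takagi): if the strict transform $E$ is normal and $(E,\Delta_E)$ is strongly $F$-regular, then the ambient pair $(Y,E+\Delta_Y)$ is purely $F$-regular in an open neighborhood of $E$. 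After shrinking $U$ so that $f^{-1}(U)$ equals that neighborhood, we obtain the claim. This is precisely the content of Propositions 2.8 and 2.11 of \cite{HaconWitaszekRationalityKawamataLogTerminalPosChar}.

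For the ``standard coefficients'' refinement, the observation is that adjunction respects the class of standard coefficients: if $\Delta_Y$ has standard coefficients then the different $\Delta_E$ arising from $(K_Y+E+\Delta_Y)\big|_E = K_E + \Delta_E$ again has standard coefficients (a classical calculation that works independent of characteristic). Combining with the previous step and the fact that $(E,\Delta_E)$ is strongly $F$-regular (again by Hara/Hacon--Xu applied to this standard coefficient surface pair), the inversion of adjunction gives that $(Y,E+\Delta_Y)$, not just $(Y,E)$, is purely $F$-regular.

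The main obstacle I anticipate is the MMP step used to secure $\mathbb{Q}$-factoriality while preserving the Koll\'ar component: one must verify that the relevant extremal contractions and flips exist in characteristic $p>5$ and do not destroy either the PLT condition or the identification of $E$ as the unique exceptional divisor. This is where the hypothesis $p>5$ is essential and where the argument genuinely relies on the recent positive characteristic MMP for threefolds. The inversion-of-adjunction part, by contrast, is essentially formal once strong $F$-regularity of $(E,\Delta_E)$ is established.
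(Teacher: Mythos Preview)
Your overall strategy is sound and is essentially what lies behind Hacon--Witaszek's Propositions 2.8 and 2.11, but two points deserve correction or comment.

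First, your reduction to strong $F$-regularity of the surface pair contains an imprecision that actually matters. You write that ``two-dimensional KLT pairs coincide with strongly $F$-regular ones'' in characteristic $p>5$, and you use this for $(E,\Delta_E)$ to deduce pure $F$-regularity of $(Y,E+\Delta_Y)$. But the equivalence of KLT and strong $F$-regularity for surface \emph{pairs} is only known when the boundary has standard coefficients (Hara, Hacon--Xu, Cascini--Gongyo--Schwede); without this, $(E,\Delta_E)$ need not be strongly $F$-regular. For the first conclusion, namely that $(Y,E)$ is purely $F$-regular, inversion of $F$-adjunction requires only that $\bigl(E,\mathrm{Diff}_E(0)\bigr)$ be strongly $F$-regular---and $\mathrm{Diff}_E(0)$ automatically has standard coefficients. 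So you should run the argument with $\mathrm{Diff}_E(0)$ in place of $\Delta_E$ for the first part, and reserve $(E,\Delta_E)$ for the second part where the standard-coefficients hypothesis is in force. The paper makes exactly this distinction.

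Second, your route to $\mathbb{Q}$-factoriality differs from the paper's. You propose to $\mathbb{Q}$-factorialize $Y_0$ and then run a relative MMP to re-contract the extra exceptional divisors while preserving the Koll\'ar-component conditions; you correctly flag this as the delicate step. The paper avoids this entirely: it first observes that the non-$\mathbb{Q}$-factorial locus of $X$ is closed of codimension $\geq 3$ (using \cite{BoissiereGAbberSermanVarietesLocalmentFactorielles} and the existence of a small $\mathbb{Q}$-factorialization from \cite{GongyoNakamuraTanakaRationalPointsFanoThreefoldsFiniteFields}), so after shrinking $U$ one may assume every point of $U$ except possibly $x$ is already $\mathbb{Q}$-factorial. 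Then \cite[Theorem~2.11]{HaconWitaszekRationalityKawamataLogTerminalPosChar} applies directly. This is cleaner: the MMP work is hidden inside the cited result rather than reproved on $Y_0$.
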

\begin{proof}
Recall that the non-$\mathbb{Q}$-factorial points of $X$ are isolated. To wit, they form a closed subset by \cite[Theorem 6.1]{BoissiereGAbberSermanVarietesLocalmentFactorielles}. By \cite[Theorem 2.14]{GongyoNakamuraTanakaRationalPointsFanoThreefoldsFiniteFields}, there exists a \emph{small}\footnote{That is, it is an isomorphism in codimension-$1$} birational morphism $Y \to X$ where $Y$ is $\mathbb{Q}$-factorial. We conclude that the non-$\mathbb{Q}$-factorial locus must have codimension $\geq 3$.
Hence, we find an open neighborhood $U$ of $x$ such that each point except possibly $x$ is $\mathbb{Q}$-factorial. Now we can apply \cite[Theorem 2.11]{HaconWitaszekRationalityKawamataLogTerminalPosChar} (where we possibly shrink $U$ again).

For the final statement, simply use that KLT and strong $F$-regularity coincide for $2$-dimensional pairs with standard coeffcients \cite{HaraFRegFPureGraded, HaconXuMMPPostiveChar, CasciniGongyoSchwedeuniformbounds}. Hence, one may apply inversion of $F$-adjunction to conclude $(Y,E+\Delta_Y)$ is purely $F$-regular \cite{DasStronglyFregularInversionOfAdjunction,TaylorInvsersionAdjunctionFSignature}.
\end{proof}

As a consequence, Hacon--Witaszek proved the following.

\begin{theorem} \label{thm.RationalityVanishing}
With notation as in \autoref{thm.ExistencePFTRblowUP}, if Kawamata--Viehweg vanishing holds for $(E,\Delta_E)$, then $R^kf_* \sF = 0$ for all $k>0$ if $\sF$ is either $\sO_Y$ or $\omega_Y$ and so $\sO_{X,x}$ is Cohen--Macaulay and further rational. Conversely, if $\sO_{X,x}$ is rational then $R^kf_* \sF = 0$ for all $k>0$ if $\sF$ is either $\sO_Y$ or $\omega_Y$.
\end{theorem}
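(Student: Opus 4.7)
The plan is to prove both vanishings by reducing them to the hypothesized Kawamata--Viehweg vanishing on the Koll\'ar component $(E, \Delta_E)$, via a filtration of the relevant sheaves by powers of the ideal sheaf $\sI_E \subset \sO_Y$; the converse direction will then be a formal consequence of Grothendieck duality.

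First I would handle $\sF = \sO_Y$. Since $R^k f_* \sO_Y$ is supported at $\{x\}$ for $k > 0$, the theorem on formal functions reduces the vanishing to $H^k(E_n, \sO_{E_n}) = 0$ for all $k > 0$ and $n \geq 1$, where $E_n$ is the $n$-th infinitesimal neighborhood of $E$. The short exact sequences
\[
0 \to \sI_E^n/\sI_E^{n+1} \to \sO_{E_{n+1}} \to \sO_{E_n} \to 0
\]
and induction on $n$ reduce the problem further to $H^k(E, \sI_E^n/\sI_E^{n+1}) = 0$ for $k > 0$, $n \geq 0$. In the Cartier case the graded pieces identify with $\sO_E(-nE|_E)$ (the merely $\bQ$-Cartier case will be handled by passing to an index-one cyclic cover, along which PLT, pure $F$-regularity and the log Fano structure all descend), so with $L = \sO_E(-nE|_E)$ one has
\[
L - (K_E + \Delta_E) = -nE|_E + \bigl(-(K_E + \Delta_E)\bigr),
\]
a sum of a nef divisor (from $f$-nefness of $-E$) and an ample one (since $(E, \Delta_E)$ is log Fano by \autoref{rem.logfano}), hence ample and in particular nef and big. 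The KVV hypothesis then delivers $H^k(E, L) = 0$ for $k > 0$, as required.

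Next, for $\sF = \omega_Y$ I would run the parallel filtration: tensoring the above sequences with $\omega_Y$ and using the adjunction $\omega_Y|_E \simeq \omega_E(-E|_E)$ produces graded pieces $\omega_E(-(n+1)E|_E)$. By Serre duality on the (Cohen--Macaulay) log Fano surface $E$, vanishing of their higher cohomology is equivalent to vanishing of lower cohomology of $\sO_E((n+1)E|_E)$, which I would extract from KVV on $(E, \Delta_E)$ in its Kodaira-dual guise together with the $\sO_E(-mE|_E)$-vanishings already in hand. Once both vanishings $R^k f_* \sO_Y = R^k f_* \omega_Y = 0$ are secured, Cohen--Macaulayness of $\sO_{X,x}$ follows from Grothendieck duality: since $(Y, E+\Delta_Y)$ is purely $F$-regular, $Y$ is strongly $F$-regular and hence CM, so $\omega_Y^\bullet \simeq \omega_Y[d]$, and combined with $Rf_* \sO_Y \simeq \sO_X$ one obtains
\[
R f_* \omega_Y[d] \simeq R\sHom_X(Rf_* \sO_Y, \omega_X^\bullet) \simeq \omega_X^\bullet,
\]
which forces $\omega_X^\bullet$ to be concentrated in degree $-d$, \ie $\sO_{X,x}$ is CM. The converse implication is then immediate: if $\sO_{X,x}$ is rational (which, as used in this paper, includes CMness), then $R^k f_* \sO_Y = 0$ by definition, and $R^k f_* \omega_Y = 0$ follows from the same duality identity read backwards.

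The main obstacle I foresee is the KVV application on the $\omega_Y$-side: whereas $\sO_E(-nE|_E)$ combines cleanly with $K_E + \Delta_E$ (the difference being nef plus ample), the twist $\omega_E(-(n+1)E|_E)$ has difference $-\Delta_E - (n+1)E|_E$ from $K_E + \Delta_E$, which fails to be nef because of the anti-effective contribution $-\Delta_E$. Routing the argument through Serre duality on $E$ and the $\sO_E$-case vanishings already established is the natural workaround but demands careful bookkeeping; a secondary technical point is handling the $\bQ$-Cartier-but-not-Cartier status of $E$ on the $\bQ$-factorial $Y$, for which an index-one cover preserving PLT, pure $F$-regularity and the log Fano structure will descend KVV appropriately.
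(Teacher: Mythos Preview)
Your forward direction is essentially the Hacon--Witaszek argument the paper cites, so that part is fine (modulo the technicalities you flag, which are indeed handled in \cite{HaconWitaszekRationalityKawamataLogTerminalPosChar}).

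The converse, however, has a genuine gap. You write that rationality of $\sO_{X,x}$ gives $R^k f_* \sO_Y = 0$ ``by definition'', but $f\colon Y \to U$ is \emph{not} a resolution of singularities: $Y$ is only $\bQ$-factorial and strongly $F$-regular, not smooth. The definition of rational singularities tells you $R^k h_* \sO_{Y'} = 0$ for a \emph{smooth} $Y' \to U$, and this says nothing directly about $R^k f_* \sO_Y$. The paper's fix is to exploit the extra input that $Y$ itself has rational singularities (strongly $F$-regular $\Rightarrow$ $F$-rational $\Rightarrow$ rational): choose a common resolution
\[
\xymatrix{
Y \ar[d]_-{f} & Y' \ar[l]_-{g} \ar[ld]^-{h}\\
U
}
\]
and compute $\mathbf{R}f_* \sO_Y = \mathbf{R}f_* (\mathbf{R}g_* \sO_{Y'}) = \mathbf{R}h_* \sO_{Y'} = \sO_U$, using rationality of $Y$ for the first equality and rationality of $U$ for the last. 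The $\omega$-case is then handled the same way via \cite[Lemma 8.2]{KovacsRationalSingularities}. Your duality-read-backwards step for $\omega_Y$ would also work once $\mathbf{R}f_*\sO_Y \simeq \sO_U$ is established, but you need the common-resolution argument (or an equivalent) to get there.
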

\begin{proof}
The first part is the content of \cite[\S4]{HaconWitaszekRationalityKawamataLogTerminalPosChar}. To see the converse, note that $Y$ is strongly $F$-regular, so $F$-rational, and further rational. Additionally, we may find a common resolution 
\[
\xymatrix{
Y \ar[d]_-{f} & Y' \ar[l]_-{g} \ar[ld]^-{h}\\
U
}
\]
where it is noted that $U$ has rational singularities if $x \in U$ is rational. Hence, 
\[
\mathbf{R}f_* \sO_Y=\mathbf{R}f_* \big(\mathbf{R}g_* \sO_{Y'}\big) = \mathbf{R} h_* \sO_{Y'} = \sO_U
\]
where we used rationality of $T$ and $U$ in the first and last equality; respectively. The same argument works replacing $\sO$ by $\omega$ using \cite[Lemma 8.2]{KovacsRationalSingularities}, \cf \cite[Proposition 2.3]{HaconWitaszekRationalityKawamataLogTerminalPosChar}.
\end{proof}

\subsection{Tameness}

We apply the above results to obtain tameness conditions for KLT threefold singularities in characteristic $p>5$. We need, however, some standard results on fundamental groups for log del Pezzo surfaces.

\subsubsection{\'Etale fundamental groups of big opens of log del Pezzo surfaces}
We remind the reader that a \emph{weak log del Pezzo surface} is a KLT surface $(X, \Delta)$ for which $-(K_X + \Delta)$ is nef and big.
We assume the following result is well-known to experts; see \cite{TakayamaSimpleConnectednessFanos,QiRationalConnectednessLogFanoes,GurjarZhangPi1SmoothPointsLogdelPezzo,XuNoetsPi1SmoothLociDelPezzo}. However, we include a proof for the sake of completeness. It will play a fundamental role in our study of tameness.

\begin{theorem} \label{thm.FundamentalGrouoDelPezzo}
Let $(X,\Delta)$ be a weak log del Pezzo surface over an algebraically closed field $\kay$ of positive characteristic $p$ ($X$ is in particular projective over $\kay$). Then, $\pi_1^{\mathrm{\acute{e}t}}(X) = 1$ and further, if $p>5$, $\pi_1^{\mathrm{\acute{e}t}}(U)$ is finite for all big opens $U \subset X$. 
\end{theorem}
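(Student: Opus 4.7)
The proof splits in two claims.

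For the first, $\piet(X)=1$, the plan is to use the theorem (due to Koll\'ar, in arbitrary characteristic) that a normal projective rationally chain connected variety over an algebraically closed field has trivial \'etale fundamental group. The weak log del Pezzo hypothesis ensures that $X$ is rationally chain connected by \cite{QiRationalConnectednessLogFanoes}. Alternatively, one may pass to a resolution $\mu\colon\tilde X\to X$, note that $\tilde X$ is a smooth projective surface of Kodaira dimension $-\infty$ which (being birational to an RCC variety) is rational by Castelnuovo, hence $\piet(\tilde X)=1$, and conclude via the surjection $\piet(\tilde X)\twoheadrightarrow\piet(X)$ coming from $\mu_*\sO_{\tilde X}=\sO_X$.

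For the second, the plan is to first reduce to finiteness of $\piet(X_{\mathrm{sm}})$ by purity. Given a big open $U\subseteq X$, set $W:=U\cap X_{\mathrm{sm}}$. Then $X_{\mathrm{sm}}\setminus W$ is a finite set of smooth points of $X$, of codimension $\geq 2$ in the regular variety $X_{\mathrm{sm}}$; Zariski--Nagata purity yields $\piet(W)\simeq\piet(X_{\mathrm{sm}})$. Similarly $U\setminus W$ consists of finitely many singular points of $X$, so $W\hookrightarrow U$ is a dense open immersion of normal varieties, inducing a surjection $\piet(W)\twoheadrightarrow \piet(U)$. Combining, $\piet(X_{\mathrm{sm}})\twoheadrightarrow \piet(U)$, so it suffices to bound $|\piet(X_{\mathrm{sm}})|$.

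The main obstacle is this final bound. In characteristic $p>5$, KLT surface singularities are strongly $F$-regular by Hara's theorem, so their local \'etale fundamental groups $\pi_1^{\mathrm{loc}}(X,P_i)$ at each singular point $P_i$ are finite by \cite{CarvajalSchwedeTuckerEtaleFundFsignature}. For any connected finite Galois \'etale cover $Y\to X_{\mathrm{sm}}$ with group $G$, its normalization $\bar Y\to X$ is a quasi-\'etale cover of degree $|G|$; by the first claim, $G$ is normally generated by the decomposition groups $D_i\subseteq G$ at preimages of the $P_i$ (otherwise the corresponding intermediate quotient would give a nontrivial \'etale cover of $X$), each $D_i$ being a finite quotient of $\pi_1^{\mathrm{loc}}(X,P_i)$. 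Normal generation by uniformly bounded finite subgroups does not by itself bound $|G|$; the extra input is that $(\bar Y,\bar\Delta_Y)$ is again a weak log del Pezzo with $(-K_{\bar Y}-\bar\Delta_Y)^2=|G|\cdot(-K_X-\Delta)^2$, so any BAB-type bound on anticanonical volumes of weak log del Pezzo surfaces (with coefficients inherited from $\Delta$) controls $|G|$. In characteristic $0$ this is the content of \cite{GurjarZhangPi1SmoothPointsLogdelPezzo,XuNoetsPi1SmoothLociDelPezzo}; in characteristic $p>5$ the same line of argument should go through using the Kawamata--Viehweg-type vanishing now available, as in \cite{HaconWitaszekRationalityKawamataLogTerminalPosChar}.
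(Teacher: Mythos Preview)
For the first claim, your approach via rational connectedness and rationality of a resolution is valid in spirit but differs from the paper's. (A caveat: \cite{QiRationalConnectednessLogFanoes} is in characteristic zero; you would need a positive-characteristic substitute.) The paper instead proves directly that $\chi(X,\sO_X)=1$ for every weak log del Pezzo surface, by passing to the minimal model $Z$ of a minimal resolution and using Grauert--Riemenschneider for surfaces to see that $H^i(X,\sO_X)\cong H^i(Z,\sO_Z)$, then ruling out the case that $Z$ is ruled over a nonrational curve. Once $\chi=1$ is known, any connected finite \'etale $f\:Y\to X$ of degree $n$ has $(Y,f^*\Delta)$ again weak log del Pezzo, so $1=\chi(Y,\sO_Y)=n\cdot\chi(X,\sO_X)=n$.

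For the second claim, your reduction to $\pi_1^{\text{\'et}}(X_{\mathrm{sm}})$ via purity is fine, but the final step via BAB-type volume bounds is a genuine gap: you acknowledge only that the argument ``should go through,'' and to make it work one must invoke boundedness of $\epsilon$-klt weak log del Pezzo surfaces in positive characteristic (the volume $(-K-\Delta)^2$ is \emph{not} uniformly bounded for merely klt pairs, e.g.\ $\bP(1,1,n)$), after checking that the $\epsilon$ is preserved under your quasi-\'etale covers. The paper bypasses all of this. Since $(X,\Delta)$ is a KLT surface, $X$ is $\bQ$-Gorenstein and hence $X$ alone is KLT; by Hara's theorem ($p>5$), $X$ is strongly $F$-regular. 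Then the main result of \cite{BhattCarvajalRojasGrafSchwedeTucker} supplies a single finite quasi-\'etale Galois cover $f\:Y\to X$, \'etale over $U$, with the property that every further quasi-\'etale cover of $Y$ is \'etale; equivalently, for $V\coloneqq f^{-1}(U)$ one has $\pi_1^{\text{\'et}}(V)=\pi_1^{\text{\'et}}(Y)$. Since $(Y,f^*\Delta)$ is again weak log del Pezzo, the first claim gives $\pi_1^{\text{\'et}}(Y)=1$, whence $\pi_1^{\text{\'et}}(U)=\Gal(Y/X)$ is finite. This is exactly the ``universal quasi-\'etale cover'' you are trying to build by hand via volume bounds; the existence theorem of \cite{BhattCarvajalRojasGrafSchwedeTucker} delivers it directly.
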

\begin{proof}
The first statement is just as in \cite[Theorem 1.1]{TakayamaSimpleConnectednessFanos} if we know $\chi(X,\sO_X)=1$ for every weak log del Pezzo surface $(X,\Delta)$. Indeed, granted this, if $f\:Y \to X$ is a (connected) finite \'etale morphism of generic degree $n$, we have that $(Y, f^*\Delta)$ is a weak log del Pezzo surface and so by \cite[18.3.9]{FultonIntersection}
\[
1=\chi(Y,\sO_Y) = n \cdot \chi(X,\sO_X) =n, 
\]
which means that $f$ is trivial and so is $\pi_1^{\mathrm{\acute{e}t}}(X)$.

The authors believe the following claim is well-known to experts yet we sketch a proof here for lack of an adequate reference. We extracted the following argument from \cite{PatakfalviPositiveCharAG}. We are thankful to Zsolt Patakfalvi for letting us include (a sketch of) his argument in this work.

\begin{claim} \label{cla.EulerCharacteristic}
$\chi(X,\sO_X)=1$
\end{claim}
\begin{proof}[Proof of claim]
Of course, this could be thought of as a direct consequence of Kawamata--Viehweg vanishing for weak log del Pezzo surfaces, which is now known to hold for $p > 5$ by \cite[Theorem 1.1]{ArvidssonBernasconiLaciniKVVforLogDelPezzosp>5}. In the general case, we may proceed as follows. 

The key (vanishing) fact is that Grauert--Riemenschneider vanishing holds for log surfaces; see \cite{KollarKovacsBirationalGeometrySurfaces}.\footnote{In other words, if $f\:S' \to S$ is a proper birational morphism between normal surfaces (over an algebraically closed field) and $S'$ is smooth, then $R^1f_* \omega_{S'}=0$.} Thus, we may let $f\:Y \to X$ be a minimal resolution and $g\: Y \to Z$ be a minimal model. Since both $Y$ and $Z$ are smooth they have rational singularities. Hence, the Leray spectral sequence shows that the vanishing of $H^i(X,\sO_X)$ is equivalent to the vanishing of $H^i(Z,\sO_Z)$. Now, define $\Gamma$ on $Y$ by the formula $K_Y+\Gamma = f^*(K_X+\Delta)$. Then, by the negativity lemma, $\Gamma$ is effective, and further $-(K_Y + \Gamma)$ is big and nef. Moreover, the coefficients of $\Gamma$ are $\leq 1$ as $(X,\Delta)$ has KLT singularities. On the other hand, one sees that $-(K_Z+g_*\Gamma)$ is big and nef, and so $-K_Z$ is big. In particular, $Z$ has negative Kodaira dimension and so $Z$ is either a ruled surface or the projective plane. Finally, one rules out the possibility that $Z$ is ruled over a nonrational curve by using that $K_Z+g_*\Gamma$ is big and nef.\footnote{To see this, consider a ruling $Z \to C$ with $C \not\cong \bP^1$. Consider the cases $Z \cong \bP^1 \times C$ and $Z \not\cong \bP^1 \times C$ separately. In the former case, intersect $-(K_Z+g_*\Gamma)$ with general fiber of $f$ to contradict that $-(K_Z+g_*\Gamma)$ is big. In the latter case, intersect $-(K_Z+g_*\Gamma)$ with the exceptional section of $f$ to contradict that $-(K_Z+g_*\Gamma)$ is nef.} That is, $Z$ is either the projective plane or a Hirzebruch surface, and so $H^1(Z,\sO_Z)=0$.

To see that $H^2(Z,\sO_Z)=0$, simply notice that $H^2(Z,\sO_Z)\cong H^0(Z,K_Z)$ and use that $-K_Z$ is big to obtain the desired vanishing.
\end{proof}
Finally, let $U \subset X$ be a big open. Since $(X, \Delta)$ is a KLT \emph{surface} it is $\mathbb{Q}$-Gorenstein \cite[Corollary 4.11]{TanakaMMPExcellentSurfaces} and that $X$ with no boundary is also KLT (\cite[Corollary 2.35]{KollarMori}). Hence, by \cite{HaraDimensionTwo}, we deduce that $X$ is strongly $F$-regular as long as $p > 5$. Then, we may use \cite{BhattCarvajalRojasGrafSchwedeTucker} to conclude the existence of a finite Galois cover $f\: Y \to X$ that is \'etale over $U$ (in particular quasi-\'etale) such that for $V=f^{-1} U$ we have $\pi_1^{\mathrm{\acute{e}t}}(V)=\pi_1^{\mathrm{\acute{e}t}}(Y)$. Nonetheless, we must have that $(Y,f^*\Delta)$ is weak log del Pezzo as $f$ is quasi-\'etale, and so $\pi_1^{\mathrm{\acute{e}t}}(Y)=1$. Hence, $\pi_1^{\mathrm{\acute{e}t}}(U) = \Gal(Y/X)$ is finite.
\end{proof}

\subsubsection{$\tau$-modules} We shall see that tameness can be understood in terms of the action of Frobenius on local cohomology modules so that we need to study modules equipped with a Frobenius action. Following \cite{BockleCohomologicalTheoryOfCrystals}, one has the following definition.
\begin{definition}
Let $X$ be an $\bF_p$-scheme. A \emph{$\tau$-module} on $X$ is a quasi-coherent $\sO_X$-module $\sF$ equipped with an $\sO_X$-linear map $\phi \: \sF \to F_* \sF$. Equivalently, letting $\sO_X[F]$ denote the sheaf of non-commutative rings given by $\sO_X\{F\}/(F\cdot a - a^p \cdot F)$ where $\sO_X\{F\}$ denotes the $\sO_X$-algebra obtained by adding a free non-commutative variable, a $\tau$-module is nothing but a left $\sO_X[F]$-module (that is quasi-coherent as sheaf of $\sO_X$-modules).
\end{definition}

As the reader may expect, the category of coherent $\tau$-modules over a field $\kay$ (of positive characteristic) should resemble the category of finitely generated modules over the PID $\kay[t]$. Indeed, if $\kay$ is perfect, a right division algorithm holds for $\kay[F]$. Consequently, every left ideal of $\kay[F]$ is principal and every submodule of a finite rank free module is free. In particular, a $\tau$-module over $\kay$ is a finite direct sum of cyclic modules $\kay[F]/\kay[F]f$ with $f=0$ or a power of an irreducible polynomial in $\kay[F]$. See \cite[\S14.e]{MilneAlgebraicGroups} for details. In particular, we have the following well-known result (see for instance \cite[III, Lemma 4.13]{MilneEtaleCohomology} and the references therein). 
\begin{theorem} \label{th.KeyDecomposition}
Let $\kay$ be a perfect field of characteristic $p>0$. Let $V$ be a coherent $\tau$-module and $\phi\: V \to F_{*}V$ be the $\kay$-linear map realizing the action of $F$ on $V$ (i.e. $\phi(v)=F\cdot v$). Then, $V$ admits a decomposition $V = V^{\mathrm{nil}} \oplus V^{\mathrm{ss}}$ of $\tau$-modules such that $\phi$ is nilpotent on $V^{\mathrm{nil}}$ and $\phi$ is an isomorphism on $V^{\mathrm{ss}}$. Moreover, if $\kay$ is algebraically closed, then $V^{\mathrm{ss}}$ admits a $\kay$-basis $\{v_1, \ldots ,v_n\}$ such that $\phi(v_i)=v_i$ and further $V^F = \langle v_1, \ldots ,v_n \rangle_{\bF_p}$.
\end{theorem}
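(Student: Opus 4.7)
The plan is to mimic the Fitting decomposition of a linear endomorphism, adapted to the $p$-linear map $\phi$. First I would observe that, because $\kay$ is perfect and $V$ is $\kay$-finite-dimensional, each image $\phi^n(V)$ is actually a $\kay$-subspace of $V$: given $w = \phi^n(v)$ and $\lambda \in \kay$, write $\lambda = \mu^{p^n}$ so that $\lambda w = \phi^n(\mu v)$. The descending chain $V \supseteq \phi(V) \supseteq \phi^2(V) \supseteq \cdots$ stabilizes, so there is an $N$ with $V^{\mathrm{ss}} := \phi^N(V) = \phi^{N+k}(V)$ for every $k \geq 0$; set $V^{\mathrm{nil}} := \ker(\phi^N)$. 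Both are $\tau$-submodules by construction. The restriction $\phi|_{V^{\mathrm{ss}}} \colon V^{\mathrm{ss}} \to F_{*}V^{\mathrm{ss}}$ is $\kay$-linear and surjective between $\kay$-vector spaces of the same dimension, hence an isomorphism; and $\phi$ is nilpotent on $V^{\mathrm{nil}}$ by definition.

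For the direct sum decomposition, $V^{\mathrm{nil}} \cap V^{\mathrm{ss}} = 0$ because $\phi$ is simultaneously nilpotent and bijective on the intersection. For the sum, if $v \in V$, then $\phi^N(v) \in V^{\mathrm{ss}}$ and $\phi^N|_{V^{\mathrm{ss}}}$ is bijective, so I can find $w \in V^{\mathrm{ss}}$ with $\phi^N(w) = \phi^N(v)$; then $v - w \in V^{\mathrm{nil}}$. This settles the first half of the statement over any perfect base.

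For the second half, assume $\kay$ algebraically closed. I would fix a $\kay$-basis $e_1, \ldots, e_n$ of $V^{\mathrm{ss}}$ and write $\phi(e_j) = \sum_i a_{ij} e_i$ with $A = (a_{ij}) \in \mathrm{GL}_n(\kay)$ since $\phi|_{V^{\mathrm{ss}}}$ is an isomorphism. A vector $v = \sum x_i e_i$ satisfies $\phi(v) = v$ if and only if $x_i = \sum_j a_{ij} x_j^p$ for every $i$. The Jacobian of this Artin–Schreier type system with respect to the $x_j$ is the identity matrix (the $x_j^p$ terms contribute zero in characteristic $p$), so over the algebraically closed field $\kay$ the system cuts out an \'etale, hence reduced, $0$-dimensional scheme of length $p^n$ by a Bezout-type degree count. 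Thus the solution set $V^F$ has exactly $p^n$ elements, and is visibly closed under addition and $\bF_p$-scalar multiplication, so it is an $\bF_p$-vector space of dimension $n$. Any $\bF_p$-basis $\{v_1, \ldots, v_n\}$ of $V^F$ is automatically $\kay$-linearly independent inside $V^{\mathrm{ss}}$ (otherwise a minimal nontrivial relation $\sum \la_i v_i = 0$ with $\la_1 = 1$ would, after subtracting its Frobenius twist $\sum \la_i^p v_i = 0$, yield a strictly shorter relation), hence spans $V^{\mathrm{ss}}$ for dimensional reasons.

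The main obstacle is the count of fixed points in the last paragraph: the first part is a routine adaptation of Fitting theory, but producing $p^n$ fixed vectors genuinely uses both algebraic closedness and the semisimple behavior of $\phi$ on $V^{\mathrm{ss}}$. One can either argue via the Jacobian/\'etaleness as above, invoke Lang's theorem for the additive group scheme realized by the equations $x_i - \sum_j a_{ij} x_j^p = 0$, or proceed by induction on $n$ using the fact that over an algebraically closed field any nonzero $\phi$-stable line contains a nonzero fixed vector (solve the single Artin–Schreier equation $\la = a \la^p$).
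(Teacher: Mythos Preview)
Your argument is correct. The paper does not actually prove this statement: it labels the result as well-known, cites \cite[III, Lemma 4.13]{MilneEtaleCohomology}, and in the preceding paragraph indicates the intended viewpoint, namely the structure theory of finitely generated left $\kay[F]$-modules. When $\kay$ is perfect, $\kay[F]$ admits a right division algorithm, so every coherent $\tau$-module decomposes as a finite direct sum of cyclic modules $\kay[F]/\kay[F]f$; the summands with $f$ a power of $F$ assemble into $V^{\mathrm{nil}}$ and the rest into $V^{\mathrm{ss}}$, after which the fixed-vector basis over an algebraically closed field is read off from the cyclic pieces (or one invokes Lang's theorem for $\mathrm{GL}_n$ to diagonalize $\phi$).

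Your route is genuinely different and more elementary: a direct Fitting-type decomposition using only that $\kay$ is perfect (so that $\phi^n(V)$ is a $\kay$-subspace), followed by an explicit count of fixed vectors. This avoids the non-commutative PID machinery entirely. The one step that deserves care is the claim that the \'etale zero-scheme has length exactly $p^n$; the phrase ``Bezout-type degree count'' is a little informal in affine space, but it becomes rigorous once you rewrite the system as $x_j^p = \sum_i (A^{-1})_{ji} x_i$, so that the quotient ring is visibly spanned by the $p^n$ reduced monomials, and equality of dimension follows either from the regular-sequence/Hilbert-series computation or, as you note, from Lang's theorem applied to $\mathrm{GL}_n$ (finding $g$ with $g^{-1} A\, g^{[p]} = I$), or from the inductive Artin--Schreier argument you sketch. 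Any of these closes the gap. The minimal-relation argument for $\kay$-linear independence of an $\bF_p$-basis of $V^F$ is clean and standard.
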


In this work, our main examples of $\tau$-modules are going to be cohomology modules. More precisely, let $f \: X \to Y$ be a morphism of $\bF_p$-schemes, and $\sF$ be a $\tau$-module with structural map $\phi\:\sF \to F_* \sF$ on $X$. Since Frobenius is functorial and affine (in particular $\mathbf{R}F_*=F_*$), we see that $\mathbf{R}f_* \phi$ defines a left $\sO_Y[F]$-module structure on $\mathbf{R}f_* \sF$ and so a $\tau$-module structure on its cohomology $R^k f_* \sF$, for
\[
\mathbf{R} f_* \big(F_* \sF\big) =  \mathbf{R} f_* \big(\mathbf{R}F_* \sF\big) = \mathbf{R}F_* \big(\mathbf{R}f_* \sF\big) = F_* \mathbf{R}f_* \sF.
\]
If $g \: Y \to Z$ is a further morphism of $\bF_p$-schemes, then the left $\sO_Z[F]$-module structure of $\mathbf{R}(g \circ f)_* \sF$ is obtained by mapping (pushing forward) the left $\sO_Y[F]$-module structure of $\mathbf{R}f_* \sF$ by $\mathbf{R} g_*$. 

We shall focus our attention on two subsheaves of $\sF$. Namely, $\sF^{\perp F} \coloneqq \ker \phi$ which is an $\sO_X$-submodule of $\sF$, and also $\sF^{F} \coloneqq \ker(\phi-\id)$ which is a subsheaf of $\sF$ as sheaves of $\bF_p$-modules, where $\id \: \sF \to F_* \sF$ is (locally defined) as the additive map $m \mapsto F_*m$. We observe that $(-)^{\perp F}$ defines a left exact functor from the category of $\tau$-modules on $X$ to the category of $\sO_X$-modules, whereas $(-)^F$ defines a left exact functor from $\tau$-modules to $\bF_p$-sheaves on $X$ (sheaves of $\bF_p$-modules). Observe that all the previous remarks apply to sheaves on the \'etale (or flat) topology of $X$.

\begin{theorem}[{\cite[Proposition 10.1.7, Lemma 10.3.1]{BockleCohomologicalTheoryOfCrystals}}] \label{th,.ExactnessOfFrobFixedPoints}
Let $X$ be a noetherian $\bF_p$-scheme. The functor of taking Frobenius fixed elements $(-)^F$ induces an \emph{exact} functor from the category of \'etale $\tau$-modules to the category of \'etale $\mathbb{F}_p$-sheaves. In particular, the canonical morphism
\[
R^k f_* \big(\sF^F\big) \to \big(R^k f_* \sF\big)^F
\]
is an isomorphism for all \'etale $\tau$-modules $\sF$ on $X$.
\end{theorem}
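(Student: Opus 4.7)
The plan is to realize $(-)^F$ as the kernel of the sheaf map $\phi - \mathrm{id}\colon \sF \to F_*\sF$ and to exploit étale-local Artin--Schreier surjectivity, then deduce the $R^kf_*$ statement from a long exact sequence.

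First I would establish the fundamental short exact sequence of étale abelian sheaves
\[
0 \longrightarrow \sF^F \longrightarrow \sF \xrightarrow{\phi - \mathrm{id}} F_* \sF \longrightarrow 0
\]
for every étale $\tau$-module $\sF$. Left-exactness is the definition of $\sF^F$. The content is the surjectivity of $\phi - \mathrm{id}$ in the étale topology: given an étale-local section $s$ of $F_*\sF$, one must produce, after a further étale refinement, a section $t$ of $\sF$ with $\phi(t)-t=s$. In the model case $\sF=\sO_X$ with the standard Frobenius this is solvability of $t^p - t = s$, which is exactly the defining property of étale Artin--Schreier covers. For a general quasi-coherent $\tau$-module, I would reduce to this case étale-locally using the local structural decomposition (\autoref{th.KeyDecomposition}): on the nilpotent part $\phi - \mathrm{id}$ is an isomorphism by Neumann series, while on the semisimple part one passes to an étale cover on which $\phi$ admits a basis of fixed sections, reducing to the standard Artin--Schreier case componentwise.

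Second, exactness of $(-)^F$ as a functor from étale $\tau$-modules to étale $\bF_p$-sheaves would follow from the snake lemma. Given a short exact sequence $0\to \sF_1\to \sF_2\to \sF_3\to 0$ of étale $\tau$-modules, the $3\times 3$ commutative diagram with vertical arrows $\phi_i-\mathrm{id}$ has exact rows and, by step one, surjective columns; the snake sequence then identifies $0\to \sF_1^F\to \sF_2^F\to \sF_3^F\to 0$ as the kernel sequence, whose exactness on the right is ensured by the vanishing of the cokernel of the last column.

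Third, the isomorphism $R^k f_*(\sF^F) \simeq (R^k f_* \sF)^F$ will come from applying $\mathbf{R}f_*$ to the sequence of step one; since Frobenius is affine, $R^k f_*(F_*\sF)=F_* R^k f_*\sF$, and the induced map $R^k f_*(\phi-\mathrm{id})$ agrees with $\Phi_k - \mathrm{id}$ where $\Phi_k$ is the $\tau$-structure on $R^k f_*\sF$. The resulting long exact sequence reads
\[
\cdots \to R^{k-1}f_*\sF \xrightarrow{\Phi_{k-1}-\mathrm{id}} F_*R^{k-1}f_*\sF \xrightarrow{\delta} R^k f_*(\sF^F) \to R^k f_* \sF \xrightarrow{\Phi_k - \mathrm{id}} F_*R^k f_* \sF \to \cdots .
\]
Applying step one to the étale $\tau$-module $R^{k-1}f_*\sF$ shows that $\Phi_{k-1}-\mathrm{id}$ is étale-locally surjective, so $\delta=0$. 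Hence $R^k f_*(\sF^F)$ embeds into $R^k f_*\sF$ with image exactly $\ker(\Phi_k-\mathrm{id}) = (R^k f_* \sF)^F$, yielding the claim.

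The main obstacle is step one: establishing étale-local surjectivity of $\phi-\mathrm{id}$ for an arbitrary $\tau$-module. The issue is that a $\tau$-module has an $\sO_X$-linear structure map $\phi$, not an $\sO_X$-linear difference $\phi-\mathrm{id}$, so one cannot argue by a simple trivialization; the reduction to the classical Artin--Schreier equation must be done carefully and in a way that is uniform enough to apply to non-finitely generated sheaves such as $R^{k-1}f_*\sF$ in the inductive application.
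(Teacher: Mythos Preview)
The paper does not itself prove that $(-)^F$ is exact; it simply invokes B\"ockle for that and only argues the ``In particular'' consequence. For that consequence your step~3 is essentially the paper's argument: apply $\mathbf{R}f_*$ to the sequence $0\to\sF^F\to\sF\to F_*\sF$ and then use exactness of $(-)^F$ to commute it with taking cohomology. Your long-exact-sequence phrasing, invoking surjectivity of $\Phi_{k-1}-\id$ on $R^{k-1}f_*\sF$, is just the exactness of $(-)^F$ applied to that particular $\tau$-module, so the two presentations are equivalent.

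Where your proposal has a real gap is step~1, and your proposed fix does not work. \autoref{th.KeyDecomposition} is a statement about \emph{coherent} $\tau$-modules over a \emph{perfect field}; it gives you nothing for the \'etale stalks of a quasi-coherent $\tau$-module on a scheme, which are modules over strictly henselian local rings (not fields) and are typically not finitely generated. There is no nilpotent/semisimple splitting available in that setting, so the reduction you sketch cannot be carried out. The actual reason $\phi-\id$ is \'etale-locally surjective is more elementary and does not pass through any structure theorem: writing the equation $\phi(t)-t=s$ in coordinates (after choosing local generators), every occurrence of the unknowns through $\phi$ is a $p$-th power, hence contributes zero to the Jacobian, while the $-t$ contributes $-I$. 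The resulting scheme over the base is therefore \'etale and finite, hence a surjective \'etale cover on which $s$ is hit. This is the content of the cited result of B\"ockle; your instinct that ``generalized Artin--Schreier'' is the point is correct, but the route via \autoref{th.KeyDecomposition} is a dead end.
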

\begin{proof}
For the last statement, notice that by applying $\mathbf{R}f_*$ to the exact sequence
\[
0 \to \sF^F \to \sF \xrightarrow{F-\id} F_*\sF
\]
we obtain a quasi-isomorphism $\mathbf{R}f_*(\sF^F) \to (\mathbf{R}f_*\sF)^F$. The statement then follows by taking cohomologies and using exactness of $(-)^F$ to say that the $k$-th cohomology of $(\mathbf{R}\sF)^F$ is $\big(\mathbf{R}^kf_*\sF\big)^F$.
\end{proof}

\begin{remark}
As pointed out to us by Jakub Witaszek, the use of \autoref{th,.ExactnessOfFrobFixedPoints} can also be replaced by an argument using perfections of schemes. This is formally related to the above theorem due to \cite[Section 1.2]{BhattLurieRiemanHilbert} where it is shown that the category of $\tau$-crystals embeds into the category of perfect Frobenius modules in a way compatible with the Frobenius fixed point functor.
\end{remark}

\subsubsection{Main results concerning tameness}
We are ready to apply the previous results in this section after recalling the following result from \cite{CarvajalFiniteTorsors}, which is well-known to experts.

\begin{lemma} \label{pro.FrefVanishingCohSupport}
Let $X$ be an integral $\bF_p$-scheme and $Z \subset X$ a proper closed subscheme. If $X$ is strongly $F$-regular, then $\sH^k_Z(X,\sO_X)^F=0$ for all $k$. If $X$ is $F$-pure, then $\sH^k_Z(X,\sO_X)^{\perp F}=0$ for all $k$.
\end{lemma}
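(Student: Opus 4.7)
The plan is to reduce both assertions to the affine case $X = \Spec R$ with $R$ an $F$-finite integral domain and $Z = V(I)$ for a proper (hence nonzero) ideal $I \subset R$. The passage from the sheaf statement to this local one is automatic, since both $F$-purity and strong $F$-regularity are Zariski-local, and for any nonempty affine open $U \subseteq X$, the irreducibility of $X$ forces $Z \cap U \subsetneq U$ to still be a proper closed subscheme.

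For the $F$-pure case, I would first upgrade $F$-purity to $F$-splitness using $F$-finiteness (which is in force by the running Convention): there is an $R$-linear retraction $\sigma \: F_* R \to R$ of Frobenius. Applying $\mathbf{R}\Gamma_I$ to the identity $\sigma \circ F = \id_R$, the induced Frobenius $\phi \: H^k_I(R) \to F_* H^k_I(R)$ becomes a split injection; hence $H^k_I(R)^{\perp F} = \ker \phi$ vanishes, which is what we wanted.

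For the strongly $F$-regular case, I would argue by contradiction. Assume $0 \ne \eta \in H^k_I(R)^F$. Since local cohomology is $I$-power torsion and $I \ne 0$, one can choose a nonzero $c \in \Ann_R(\eta)$. By the very definition of strong $F$-regularity, there exists $e \gg 0$ such that the $R$-linear map $\psi \: R \to F^e_* R$ given by $1 \mapsto F^e_*(c)$ splits as $R$-modules. Applying $H^k_I$ yields a split injection $\psi_* \: H^k_I(R) \to F^e_* H^k_I(R)$. Factoring $\psi$ as the ring Frobenius $F^e \: R \to F^e_* R$ composed with multiplication by $F^e_* c$, functoriality of local cohomology identifies
\[
\psi_*(\eta) = F^e_*\bigl(c \cdot \phi^e(\eta)\bigr),
\]
where $\phi^e \: H^k_I(R) \to F^e_* H^k_I(R)$ is the $e$-th Frobenius iterate. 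Iterating the Frobenius-fixed hypothesis gives $\phi^e(\eta) = \eta$ under the abelian-group identification $F^e_* H^k_I(R) \simeq H^k_I(R)$, so $\psi_*(\eta) = F^e_*(c\eta) = 0$, contradicting the injectivity of $\psi_*$.

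The main obstacle is the careful bookkeeping required to derive the formula for $\psi_*(\eta)$, since $F^e_* H^k_I(R)$ carries two distinct but useful incarnations: as an abelian group it is just $H^k_I(R)$, but as an $R$-module the action is twisted by $F^e$. Once this identification is properly nailed down, everything else reduces to standard functoriality of local cohomology together with the defining splittings of Frobenius supplied by $F$-purity and strong $F$-regularity.
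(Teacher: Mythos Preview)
Your proposal is correct and follows essentially the same approach as the paper's proof: reduce to the affine case, use the Frobenius splitting to get injectivity of $\phi$ for the $F$-pure statement, and for the strongly $F$-regular statement pick a nonzero annihilator of a putative $F$-fixed class, split the map $R \to F^e_*R$ sending $1 \mapsto F^e_* c$, and apply $H^k_I$ to derive a contradiction. Your write-up is in fact slightly more careful than the paper's in justifying why a nonzero annihilator exists (via $I$-power torsion) and in tracking the $F^e_*$ bookkeeping.
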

\begin{proof}
It suffices to consider the affine case. Write $X = \Spec R$ integral and $Z=V(\mathfrak{a})$ such that $\mathfrak{a} = \sqrt{\mathfrak{a}} \neq 0$. If $R$ is $F$-pure, meaning that $F^{\#} \:R \to F_*R$ is split, then $\phi \coloneqq H^k_{\mathfrak{a}}(F^{\#}) \: H^k_{\mathfrak{a}}(R) \to F_* H^k_{\mathfrak{a}}(R)$ is split and so injective, that is $0 =\ker \phi =H^k_{\mathfrak{a}}(R)^{\perp F}$.

Let $a \in H^k_{\mathfrak{a}}(R)$ and choose $r \neq 0$ such that $r \cdot a = 0$. If $R$ is strongly $F$-regular, we find $e \gg 0$ and an $R$-linear map $\vartheta \: F^e_*R \to R$ such that the following composition of $R$-linear maps is the identity
\[
R \xrightarrow{F^{e,\#}} F^e_*R \xrightarrow{\cdot F^e_* r} F^e_* R \xrightarrow{\vartheta} R.
\]
Applying the functor $H^k_{\mathfrak{a}}(-)$, we have that the composition
\[
H^k_{\mathfrak{a}}(R) \xrightarrow{\phi^e} F^e_* H^k_{\mathfrak{a}}(R) \xrightarrow{ F^e_* \cdot r} F^e_* H^k_{\mathfrak{a}}(R) \xrightarrow{\theta} H^k_{\mathfrak{a}}(R)
\]
is the identity as well. However, if $a \in H^k_{\mathfrak{a}}(R)^F$; that is $\phi(a)=F_*a$, then $a$ is mapped to zero along that composition, and so it must be zero.
\end{proof}

\begin{theorem} \label{thm.Tameness}
Work in \autoref{setup} and suppose that $\kay$ is algebraically closed of characteristic $p>5$ and $d=3$. Then, $H^1(S^{\circ}, \sO_{S^{\circ}})^F=H^2_{Z}(S,\sO_S)^F=0$.
\end{theorem}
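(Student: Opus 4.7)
The plan is to use the generalized PLT blowup of \autoref{thm.ExistencePFTRblowUP} to compare $S$ with a strongly $F$-regular birational model and then invoke \autoref{pro.FrefVanishingCohSupport}.

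To begin, since $S$ is affine and $H^i(S,\sO_S) = 0$ for $i>0$, the local cohomology long exact sequence collapses to a $\tau$-module isomorphism $H^1(S^{\circ}, \sO_{S^{\circ}}) \cong H^2_Z(S, \sO_S)$, so it suffices to establish the vanishing of the $F$-fixed part of either side. Applying \autoref{thm.ExistencePFTRblowUP} to a suitable open $U \ni x$ and pulling back by $S \to U$, one obtains a PLT blowup $f \colon (Y, \Delta_Y) \to S$ extracting a Koll\'ar component $(E, \Delta_E)$ with $(Y, E)$ purely $F$-regular; in particular $Y$ is itself strongly $F$-regular. Because $(E, \Delta_E)$ is a weak log del Pezzo surface in characteristic $p > 5$, Kawamata--Viehweg vanishing is available for it, so \autoref{thm.RationalityVanishing} yields $R^if_*\sO_Y = 0$ for $i > 0$ and consequently $H^i(Y,\sO_Y) = H^i(S,\sO_S) = 0$ for $i > 0$.

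Since $Z$ is a non-empty closed subset of the local scheme $S$ it contains the closed point $x$, and hence $f$ restricts to an isomorphism $Y^{\circ} := f^{-1}(S^{\circ}) \xrightarrow{\sim} S^{\circ}$. Setting $Z_Y := Y \setminus Y^{\circ}$ and feeding the vanishings $H^i(Y,\sO_Y)=0$ ($i>0$) into the local cohomology long exact sequence on $Y$, one obtains the $\tau$-module identification
\[
H^1(S^{\circ}, \sO_{S^{\circ}}) \;\cong\; H^1(Y^{\circ}, \sO_{Y^{\circ}}) \;\cong\; H^2_{Z_Y}(Y, \sO_Y).
\]
It therefore remains to show $H^2_{Z_Y}(Y,\sO_Y)^F = 0$. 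Since strong $F$-regularity passes to strict henselizations, \autoref{pro.FrefVanishingCohSupport} applied to the \'etale stalks of $Y$ yields $\sH^q_{Z_Y}(\sO_Y)^F = 0$ as an \'etale sheaf for every $q \geq 0$. Feeding this into the local-to-global spectral sequence
\[
E_2^{p,q} \;=\; H^p_{\mathrm{\acute{e}t}}\bigl(Y, \sH^q_{Z_Y}(\sO_Y)\bigr) \;\Longrightarrow\; H^{p+q}_{Z_Y}(Y, \sO_Y)
\]
(all sheaves involved are quasicoherent, so Zariski and \'etale cohomology agree), the exactness of the $F$-fixed functor from \autoref{th,.ExactnessOfFrobFixedPoints} gives $(E_2^{p,q})^F \cong H^p_{\mathrm{\acute{e}t}}(Y, \sH^q_{Z_Y}(\sO_Y)^F) = 0$, so the entire $E_\infty$-page vanishes on $F$-fixed parts and hence so does the abutment $H^2_{Z_Y}(Y,\sO_Y)^F$.

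The main obstacle is the careful juggling of Zariski versus \'etale cohomology together with the $\tau$-module compatibility of the spectral sequence in order to legitimately commute $(-)^F$ with cohomology via \autoref{th,.ExactnessOfFrobFixedPoints}. The essential geometric inputs---existence of the generalized PLT blowup and the Kawamata--Viehweg vanishing used to kill $R^if_*\sO_Y$---are precisely what force the characteristic restriction $p > 5$.
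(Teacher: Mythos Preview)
Your argument is correct but takes a genuinely different route from the paper, and the difference is worth spelling out.

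You invoke Kawamata--Viehweg vanishing for the log del Pezzo Koll\'ar component $(E,\Delta_E)$ in characteristic $p>5$ in order to obtain $R^if_*\sO_Y=0$ (equivalently, rationality of $S$), and then run a clean local-to-global spectral sequence on the strongly $F$-regular $Y$. Once $H^i(Y,\sO_Y)=0$ for $i>0$ is in hand, your reduction to $H^2_{Z_Y}(Y,\sO_Y)^F=0$ via \autoref{pro.FrefVanishingCohSupport} and \autoref{th,.ExactnessOfFrobFixedPoints} is valid. However, you should be aware that the KVV input you assert is \emph{not} established within the paper's framework: \autoref{thm.RationalityVanishing} is stated only conditionally, and the bound $p>5$ for KVV on log del Pezzo surfaces comes from \cite{ArvidssonBernasconiLaciniKVVforLogDelPezzosp>5}, which appeared after the first version of this paper (see the remark at the end of the introduction). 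The paper is deliberately organized so that \autoref{thm.Tameness} does not rely on rationality.

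The paper's own proof avoids this input entirely. Instead of collapsing $H^i(Y,\sO_Y)$, it runs the Leray spectral sequence for $i=\pi\circ j$ to isolate the obstruction as $(R^1\pi_*(j_*\sO_{S^\circ}))^F$, then rewrites this via Artin--Schreier and \autoref{th,.ExactnessOfFrobFixedPoints} as $R^1\pi_*(\bZ/p\bZ)$, and finally kills it using proper base change and $\pi_1^{\mathrm{\acute{e}t}}(E)=1$ from \autoref{thm.FundamentalGrouoDelPezzo}. The key vanishing $H^1(T,\sO_T)^F=0$ is thus obtained from the \'etale simple connectedness of $E$ rather than from any cohomology vanishing on $E$; \autoref{rem.RemarkDifferentExplanation} makes this point explicitly. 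Your approach is shorter but imports a strong external vanishing theorem; the paper's approach is more intricate but is self-contained with respect to rationality, which matters for the paper's stated goal of separating which results require rationality from those that do not.
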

\begin{proof}
We consider the following diagram
\[
\xymatrix{
S^{\circ} \ar[r]^-{j} \ar[dr]_-{i} & T \ar[d]^-{\pi} \\
& S
}
\]
where $i$ and $j$ are the open immersions whereas $\pi$ is a generalized PLT blowup. Strictly speaking, $\pi$ is the pullback of a PLT blowup $f\: (Y,\Delta_Y) \to U$ (as in \autoref{thm.ExistencePFTRblowUP}) along the canonical morphism $S \to U$. Denote by $(E,\Delta_E)$ the Koll\'ar component of $\pi$, which is a log del Pezzo surface.

Recall that $H^1(S^{\circ}, \sO_{S^{\circ}})$ is the $R$-module given by the global sections of $R^1i_* \sO_{S^{\circ}}$ (as $S = \Spec R$ is affine). On the other hand, by \cite[I, Corollaire 2.11 (29)]{SGA2}, there are canonical isomorphisms of $\tau$-modules on $S$
\[
R^1 i_* \sO_{S^{\circ}} \cong  \sH_Z^2(S,\sO_S), \qquad R^1 j_* \sO_{S^{\circ}} \cong \sH_{\pi^{-1}Z}^2(T,\sO_T).
\] 
In particular, $\big(R^1 j_* \sO_{S^{\circ}}\big)^F=0$ by \autoref{pro.FrefVanishingCohSupport} (and using \autoref{thm.ExistencePFTRblowUP} to ensure $F$-regularity of $T$). On the other hand, $\mathbf{R} i_ * = \mathbf{R}\pi_* \circ \mathbf{R}j_*$, so that $\mathbf{R} i_* \sO_{S^{\circ}} = \mathbf{R}\pi_*\big(\mathbf{R}j_* \sO_{S^{\circ}}\big)$. From the corresponding Grothendieck--Leray spectral sequence, one extracts the following exact sequence of $\tau$-modules on $S$
\[
0 \to R^1\pi_* (j_\ast \sO_{S^\circ}) \to R^1 i_\ast \sO_{S^\circ} \to \pi_\ast \big( R^1 j_\ast \sO_{S\circ}\big).
\]
See \cite[Appendix B, Theorem 1]{MilneEtaleCohomology}. Applying the left exact functor $(-)^F$, we have
\[
0 \to \big( R^1\pi_* (j_\ast \sO_{S^\circ}) \big)^F\to \big( R^1 i_\ast \sO_{S^\circ} \big)^F \to \pi_\ast \big( R^1 j_\ast \sO_{S\circ}\big)^F = 0.
\]
Therefore, it suffices to prove the vanishing
\[
\big( R^1\pi_* (j_\ast \sO_{S^\circ}) \big)^F = 0.
\]
Observe the cohomology sheaf in question is defined on $S$, which is the spectrum of a strictly local ring. In particular, \'etale cohomology on $S$ coincides Zariski cohomology. Carrying out computations on the \'etale site, we have 
\[
\big( R^1\pi_* (j_\ast \sO_{S^\circ}) \big)^F = \big( R^1\pi_* (j_\ast \bG_{\mathrm{a}}) \big)^F =  R^1\pi_* \big(j_\ast \bG_{\mathrm{a}}^F\big) =  R^1\pi_* (j_\ast \bZ/p\bZ) =  R^1 \pi_* (\bZ/p\bZ)
\]
by the Artin--Schreier short exact sequence
\[
0 \to \bZ/p\bZ \to \bG_{\mathrm{a}} \xrightarrow{F-\id} \bG_{\mathrm{a}} \to 0
\] and \autoref{th,.ExactnessOfFrobFixedPoints}.
However,
\[
 R^1\pi_* (\bZ/p\bZ)_{\bar{x}} = H^1(T_{\mathrm{\acute{e}t}},\bZ/p\bZ).
\]
by \cite[III, Theorem 1.15]{MilneEtaleCohomology}. On the other hand, pulling back along the closed embbeding $E \to T$ induces an isomorphism
\[
H^1(T_{\mathrm{\acute{e}t}},\bZ/p\bZ) \xrightarrow{\cong} H^1(E_{\mathrm{\acute{e}t}},\bZ/p\bZ) 
\]
by the proper base change theorem \cite[VI, Corollary 2.7]{MilneEtaleCohomology}. However,
\[
 H^1(E_{\mathrm{\acute{e}t}},\bZ/p\bZ) \cong \Hom_{\mathrm{cont}}\big(\pi_1^{\mathrm{\acute{e}t}}(E), \bZ/p\bZ\big)
\]
as pointed sets; see \cite[III, \S4]{MilneEtaleCohomology}. The result then follows from \autoref{thm.FundamentalGrouoDelPezzo}.
\end{proof}

\begin{remark} \label{rem.RemarkDifferentExplanation}
With notation as in the proof of \autoref{thm.Tameness}, we have more generally that pulling back along $E \to T$ induces an isomorphism $\pi_1^{\mathrm{\acute{e}t}}(E) \xrightarrow{\cong} \pi_1^{\mathrm{\acute{e}t}}(T)$; see \cite[IV, Proposition 2.2]{DeligneBoutotCohomologiETale} \cf \cite{ArtinAlgebraicApproximationOfStructuiresOverCmpleteLocalRIngs}, and so $\pi_1^{\mathrm{\acute{e}t}}(T) = 1$ by \autoref{thm.FundamentalGrouoDelPezzo}. Consider the the short exact sequence
\[0 \to H^0(T,\sO_T)/(F-\id)H^0(T,\sO_T) \to H^1(T_{\mathrm{\acute{e}t}},\bZ/p\bZ) \to H^1(T,\sO_T)^F \to 0,
\]
see \cite[III, Proposition 4.12]{MilneEtaleCohomology}. Notice that $R \to H^0(T,\sO_T)$ is an isomorphism as any global section of $T$ restricts to a global section $T \smallsetminus E \cong S \smallsetminus \{x\}$ which is an element of $R$ (for $R$ is normal and so $\mathbf{S}_2$). Moreover, since $R$ is strictly local, we see that $H^0(T,\sO_T)/(F-\id)H^0(T,\sO_T) = 0$.\footnote{Indeed, nonzero elements of $R/(F-\id)R$ correspond to nontrivial $\bZ/p\bZ$-torsors over $S$ via $r \in R/(F-\id)R \mapsto \Spec \big(R[t]/(t^p-t+r) \big) \to S$, which are trivial if $R$ is strictly local.\label{footnote2}} Thus, we obtain the vanishing
\[
 H^1(T,\sO_T)^F=0
\]
which would have been a consequence of the rationality of $S$.
\end{remark}

\begin{corollary} \label{cor.PGroupTorsors}
With the same setup of \autoref{thm.Tameness}, $H^1(S^{\circ}_{\mathrm{fl}},G) = *$ for all $p$-groups $G$. In particular, for all $p$-groups $G$, every $G$-torsor over $S^{\circ}$ is trivial.
\end{corollary}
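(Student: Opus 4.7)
The plan is to reduce by devissage to the case $G = \bZ/p\bZ$, and then to invoke \autoref{thm.Tameness} via the Artin--Schreier sequence. Since $\kay$ is algebraically closed, every finite \'etale $p$-group scheme over $\kay$ is constant, hence an honest finite $p$-group; as such it is nilpotent, so it admits a central filtration with subquotients isomorphic to $\bZ/p\bZ$.

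For the base case $G = \bZ/p\bZ$, I would apply the Artin--Schreier sequence
\[
0 \to \bZ/p\bZ \to \bG_{\mathrm{a}} \xrightarrow{F-\id} \bG_{\mathrm{a}} \to 0
\]
on $S^{\circ}$ (where flat cohomology agrees with \'etale cohomology, as $\bZ/p\bZ$ is smooth) to obtain the exact sequence
\[
H^0(S^{\circ}, \sO_{S^{\circ}})/(F-\id)H^0(S^{\circ}, \sO_{S^{\circ}}) \to H^1(S^{\circ}_{\mathrm{fl}}, \bZ/p\bZ) \to H^1(S^{\circ}, \sO_{S^{\circ}})^F \to 0.
\]
The rightmost term vanishes by \autoref{thm.Tameness}. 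For the leftmost term, $H^0(S^{\circ}, \sO_{S^{\circ}}) = R$ because $R$ is normal (hence $\mathbf{S}_2$) and $Z$ has codimension at least $2$; and $F - \id$ is surjective on $R$ because $R$ is strictly Henselian, so every $\bZ/p\bZ$-torsor over $\Spec R$ is trivial (this is the argument recalled in footnote~\ref{footnote2} of \autoref{rem.RemarkDifferentExplanation}). Hence $H^1(S^{\circ}_{\mathrm{fl}}, \bZ/p\bZ) = *$.

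For the inductive step, I would induct on $|G|$. If $|G| > p$, the center $Z(G)$ is nontrivial (as $G$ is a nontrivial $p$-group), so I may pick a central subgroup $H \subset Z(G)$ of order $p$, necessarily $\cong \bZ/p\bZ$. The central extension $1 \to H \to G \to G/H \to 1$ induces an exact sequence of pointed sets
\[
H^1(S^{\circ}_{\mathrm{fl}}, H) \to H^1(S^{\circ}_{\mathrm{fl}}, G) \to H^1(S^{\circ}_{\mathrm{fl}}, G/H).
\]
By the base case the left term is trivial, and by the induction hypothesis applied to $G/H$ the right term is trivial as well, so $H^1(S^{\circ}_{\mathrm{fl}}, G) = *$.

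The substantive input has already been supplied by \autoref{thm.Tameness}; what remains is only a formal devissage in nonabelian cohomology. The main obstacle—and the reason I insist on a \emph{central} filtration—is that for a non-central normal subgroup the fiber of $H^1(X, G) \to H^1(X, G/H)$ over a given $G/H$-torsor is classified not by $H^1$ of $H$ itself but by $H^1$ of a \emph{twisted} form of $H$, which would complicate the bookkeeping considerably. Nilpotency of finite $p$-groups makes central filtrations available and sidesteps this subtlety entirely.
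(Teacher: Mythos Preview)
Your proof is correct and follows essentially the same route as the paper: Artin--Schreier for the base case $G=\bZ/p\bZ$ (using $H^0(S^\circ,\sO_{S^\circ})=R$ and strict henselianity of $R$, together with \autoref{thm.Tameness}), then induction on $|G|$ via a short exact sequence $1\to\bZ/p\bZ\to G\to H\to 1$ and the long exact sequence of nonabelian cohomology. Your insistence on a \emph{central} $\bZ/p\bZ$ is harmless but not strictly required here: since you are proving that both $H^1(S^\circ_{\mathrm{fl}},\bZ/p\bZ)$ and $H^1(S^\circ_{\mathrm{fl}},G/H)$ are the trivial pointed set, exactness at the middle term forces every class in $H^1(S^\circ_{\mathrm{fl}},G)$ to lie in the image of the untwisted $H^1(S^\circ_{\mathrm{fl}},\bZ/p\bZ)$, so twisting never enters.
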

\begin{proof}
Just as in \cite[\S4.3]{CarvajalFiniteTorsors}, this is a formal consequence of \autoref{thm.Tameness}. Indeed, say the order of $G$ is $p^e$. We may proceed by induction on $e$. The case $e=1$ follows from \cite[III, Proposition 4.12]{MilneEtaleCohomology} which establishes a short exact sequence
\[
0 \to R/(F-\id)R \to H^1(S^{\circ}_{\mathrm{\acute{e}t}},\bZ/p\bZ) \to H^1(S^{\circ},\sO_{S^{\circ}})^F \to 0
\]
However, $R/(F-\id)R = 0$ as $R$ is strictly local (see footnote~\ref{footnote2} on page~\pageref{footnote2}). By \autoref{thm.Tameness}, $H^1(S^{\circ},\sO_{S^{\circ}})^F=0$. Then, since $\bZ/p\bZ$ is abelian and \'etale, $ H^1(S^{\circ}_{\mathrm{fl}},\bZ/p\bZ)=H^1(S^{\circ}_{\mathrm{\acute{e}t}},\bZ/p\bZ)=0$. For arbitrary $e>1$, use \cite[I, Corollary 6.6]{LangAlgebra}. That is, there is a short exact sequence
\[
1 \to \bZ/p\bZ \to G \to H \to 1
\]
where $H$ is a $p$-group of order $p^{e-1}$. Then, we conclude by taking (non-abelian) cohomology \cite[III, Proposition 4.5]{MilneEtaleCohomology} and the inductive hypothesis.
\end{proof}

For the next corollary, we use the following lemma.

\begin{lemma} \label{lem.FinitegenerationH^2}
Let $(R,\fram)$ be a quasi-excellent noetherian normal local ring of dimension $d \geq 3$, then $H^2_{\fram}(R)$ is a finitely generated $\kay$-module.
\end{lemma}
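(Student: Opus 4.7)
The plan is to apply Grothendieck's local duality to reduce the claim to a support computation on the cohomology of the dualizing complex of $R$.

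Since $R$ is normal, it is $S_2$, so $\depth R \geq 2$ and hence $H^0_{\fram}(R) = H^1_{\fram}(R) = 0$. Being quasi-excellent, $R$ admits a normalized dualizing complex $\omega_R^{\bullet}$. By local duality (applied over $\hat R$, using the invariance of local cohomology under completion), I would obtain the Matlis-duality isomorphism
\[
H^2_{\fram}(R)^{\vee} \cong H^{-2}(\omega_R^{\bullet}),
\]
where $(-)^{\vee} = \Hom_R(-, E_R(\kay))$. Matlis duality exchanges Artinian modules with finitely generated modules over $\hat R$ and preserves finite length, and $H^{-2}(\omega_R^{\bullet})$ is a priori finitely generated. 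Hence it suffices to show that its support is contained in $\{\fram\}$: then it will have finite length, so $H^2_{\fram}(R)$ will as well, and any finite length $R$-module is a finite-dimensional $\kay$-vector space via its composition series.

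To verify the support condition, I would take any prime $\fp \neq \fram$, of height $h \leq d - 1$. The dualizing complex localizes as $\omega_{R_{\fp}}^{\bullet} \cong (\omega_R^{\bullet})_{\fp}[-\dim R/\fp]$. Combining this with local duality on $R_{\fp}$ yields
\[
H^{-2}(\omega_R^{\bullet})_{\fp} \;\cong\; H^{-2+d-h}(\omega_{R_{\fp}}^{\bullet}) \;\cong\; \bigl(H^{2-d+h}_{\fram R_{\fp}}(R_{\fp})\bigr)^{\vee}.
\]
The exponent satisfies $2 - d + h \leq 1$ since $h \leq d - 1$. Because $R_{\fp}$ is normal, it is $S_2$, so $H^i_{\fram R_{\fp}}(R_{\fp}) = 0$ for $i \leq 1$ whenever $\dim R_{\fp} \geq 2$; the edge cases where $R_{\fp}$ is a field or a DVR are immediate. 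Hence the local cohomology above vanishes, and so does $H^{-2}(\omega_R^{\bullet})_{\fp}$, as needed.

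The main obstacle will be the careful invocation of local duality in the quasi-excellent (not necessarily complete) setting, together with the bookkeeping of the shift convention for the dualizing complex under localization. Once these are set up, the verification becomes a direct consequence of the normality of $R$ and of its localizations.
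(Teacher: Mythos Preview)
Your argument is correct and is essentially the paper's proof with the black box opened: the paper passes to $\hat R$, observes it is normal by quasi-excellence, and cites SGA2, Expos\'e~VIII, Corollaire~2.3 to reduce finite generation of $H^2_{\hat\fram}(\hat R)$ to the vanishing of $H^1_{\fq}(\hat R_{\fq})$ at height $d-1$ primes, whereas you rederive exactly that criterion via local duality and the support of $H^{-2}(\omega^\bullet)$---both routes land on the same $S_2$ estimate supplied by normality. One point to tighten: rather than claiming that quasi-excellence gives $R$ itself a dualizing complex, run the whole argument over $\hat R$ (as your parenthetical suggests), where the dualizing complex exists by the Cohen structure theorem and the shift formula under localization is justified by catenarity; quasi-excellence is then used precisely to ensure that $\hat R$, and hence each $\hat R_{\fq}$, remains normal.
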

\begin{proof}
Quite generally, $H^2_{\fram}(R)$ is an artinian $R$-module \cite[Proposition 3.5.4 (a)]{BrunsHerzog}. It then suffices to show $H^2_{\fram}(R)$ is noetherian. To that end, $\hat{R}$ is a quotient of a regular ring by the Cohen structure theorem. However, $H^2_{\hat{\fram}}(\hat{R})= \hat{R} \otimes_R H^2_{\fram}(R)$ and so $H^2_{\fram}(R)$ is finitely generated as an $R$-module if and only if so is $H^2_{\hat{\fram}}(\hat{R})$ as an $\hat{R}$-module; see \cite[\href{https://stacks.math.columbia.edu/tag/03C4}{Tag 03C4}]{stacks-project}. Now observe that $\hat{R}$ is normal \cite[\href{https://stacks.math.columbia.edu/tag/0C23}{Tag 0C23}]{stacks-project}. 
Then one uses \cite[Expos\'e VIII, Corollaire 2.3]{SGA2} which establishes that $H^2_{\fram}(\hat{R})$ is finitely generated if $H^1_{\p R_{\p}}(\hat{R}_{\mathfrak{p}})=0$ for all prime ideals $\mathfrak{p} \subset \hat{R}$ of height $d-1 \geq 2$, which follows from normality.
\end{proof}

\begin{corollary} \label{cor.FrationalityImpliesRational}
With the same setup of \autoref{thm.Tameness}, if $S$ is $F$-injective then it is rational.
\end{corollary}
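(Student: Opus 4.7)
The plan is to show that $R$ is Cohen--Macaulay and then invoke Kov\'acs' equivalence of Cohen--Macaulayness and rationality for KLT singularities (see \cite{KovacsRationalSingularities}, as recalled in the footnote of the introduction) to conclude. Since $R$ is normal of dimension $3$ we already have $H^0_\fram(R) = H^1_\fram(R) = 0$, so the whole question reduces to establishing the vanishing $H^2_\fram(R) = 0$.

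Specializing \autoref{thm.Tameness} to $Z = \{x\} = V(\fram)$---whose codimension is $3 \geq 2$---immediately yields $H^2_\fram(R)^F = 0$. The second key input is the just-proved \autoref{lem.FinitegenerationH^2}, which guarantees that $H^2_\fram(R)$ is in fact a finite-dimensional $\kay$-vector space, and hence a coherent $\tau$-module over the algebraically closed field $\kay$. We may therefore apply \autoref{th.KeyDecomposition} to obtain a decomposition
\[
H^2_\fram(R) = V^{\mathrm{nil}} \oplus V^{\mathrm{ss}}
\]
on whose first summand the Frobenius $\phi$ is nilpotent and on whose second summand it is an isomorphism, with $V^{\mathrm{ss}}$ admitting a $\kay$-basis of $F$-fixed vectors whose $\bF_p$-span is the whole of $V^F = H^2_\fram(R)^F$.

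The two summands will now be forced to vanish separately. First, the $F$-injectivity of $R$ makes $\phi$ act injectively on the whole of $H^2_\fram(R)$, and when combined with nilpotence on $V^{\mathrm{nil}}$ this gives $V^{\mathrm{nil}} = 0$. Second, the $F$-fixed basis of $V^{\mathrm{ss}}$ lies in $V^F \subseteq H^2_\fram(R)^F = 0$, so it must be empty and $V^{\mathrm{ss}} = 0$ as well. We conclude $H^2_\fram(R) = 0$, so $R$ is Cohen--Macaulay and hence rational. I do not expect any genuine obstacle here: the argument is a clean assembly of the preceding results. The only delicate point is the need for finite-dimensionality over $\kay$ in order to invoke the $\tau$-module structure theorem, which is precisely what \autoref{lem.FinitegenerationH^2} was isolated to supply and which is what makes the argument specific to dimension three.
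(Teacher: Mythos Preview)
Your proof is correct and follows essentially the same route as the paper: reduce rationality to Cohen--Macaulayness via Kov\'acs, then kill $H^2_\fram(R)$ by combining \autoref{lem.FinitegenerationH^2}, the $\tau$-module decomposition of \autoref{th.KeyDecomposition}, the vanishing $H^2_\fram(R)^F=0$ from \autoref{thm.Tameness}, and $F$-injectivity to eliminate the nilpotent part. The paper phrases the last step as ``it suffices to show $H^2_\fram(R)^{\perp F}=0$,'' but this is exactly your observation that injectivity of $\phi$ forces $V^{\mathrm{nil}}=0$.
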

\begin{proof}
For a KLT singularity, rationality is the same as Cohen--Macaulayness \cite{KovacsRationalSingularities}. Hence, the statement amounts to the vanishing $H^2_{\fram}(R)=0$. However, putting together \autoref{lem.FinitegenerationH^2}, \autoref{th.KeyDecomposition}, and \autoref{thm.Tameness}, it suffices to show $H^2_{\fram}(R)^{\perp F}=0$. This, however, is trivially the case for $F$-injective singularities.
\end{proof}

Next, we study the Frobenius annihilated elements on $H^1(S^{\circ},\sO_{S^{\circ}})$ assuming rationality.

\begin{theorem} \label{thm.UnipotentTameness}
Work in the setup of \autoref{thm.Tameness}. If $S$ is rational, then $H^1(S^{\circ}, \sO_{S^{\circ}})^{\perp F}=H^2_{Z}(S,\sO_S)^{\perp F}=0$.
\end{theorem}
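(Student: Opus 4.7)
The strategy is to exploit the rationality hypothesis to transfer the local cohomology computation from $S$ up to a generalized PLT blowup $\pi \: T \to S$, where the strongly $F$-regular geometry of $T$ makes \autoref{pro.FrefVanishingCohSupport} directly applicable. Recall from \autoref{thm.RationalityVanishing} that rationality of $S$ is equivalent to $R^k\pi_* \sO_T = 0$ for all $k > 0$; that is, the natural morphism $\sO_S \to \mathbf{R}\pi_* \sO_T$ is a quasi-isomorphism. Since $T$ is strongly $F$-regular (in particular $F$-pure) by \autoref{thm.ExistencePFTRblowUP}, we obtain the desired vanishing by matching $H^2_Z(S,\sO_S)$ with $H^2_{\pi^{-1}Z}(T,\sO_T)$ as $\tau$-modules and then invoking the $F$-pure half of \autoref{pro.FrefVanishingCohSupport}.

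Concretely, apply $\mathbf{R}\Gamma_Z(S,-)$ to the quasi-isomorphism $\sO_S \simeq \mathbf{R}\pi_*\sO_T$ and use the standard identity $\mathbf{R}\Gamma_Z(S,-) \circ \mathbf{R}\pi_* = \mathbf{R}\Gamma_{\pi^{-1}Z}(T,-)$ to obtain a quasi-isomorphism
\[
\mathbf{R}\Gamma_Z(S,\sO_S) \simeq \mathbf{R}\Gamma_{\pi^{-1}Z}(T,\sO_T).
\]
This is in fact a quasi-isomorphism of complexes of $\tau$-modules: the Frobenius compatibility follows from the naturality of $F^{\#}$ together with the identity $\pi \circ F_T = F_S \circ \pi$, which ensures that $F_*$ commutes with $\mathbf{R}\pi_*$, exactly as used in the discussion preceding \autoref{th,.ExactnessOfFrobFixedPoints}. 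Taking $H^2$ yields an isomorphism of $\tau$-modules $H^2_Z(S,\sO_S) \cong H^2_{\pi^{-1}Z}(T,\sO_T)$, and the right-hand side satisfies $(-)^{\perp F} = 0$ by \autoref{pro.FrefVanishingCohSupport} applied to the $F$-pure ring governing $T$ and the proper closed subscheme $\pi^{-1}Z \subset T$. Hence $H^2_Z(S,\sO_S)^{\perp F} = 0$. Since the proof of \autoref{thm.Tameness} already produced an isomorphism of $\tau$-modules $H^1(S^\circ,\sO_{S^\circ}) \cong H^2_Z(S,\sO_S)$ (via $R^1 i_* \sO_{S^\circ} \cong \sH^2_Z(S,\sO_S)$ and the affineness of $S$), the vanishing $H^1(S^\circ,\sO_{S^\circ})^{\perp F} = 0$ follows at once.

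The only point that really requires care is the bookkeeping of the Frobenius action through the above derived-category manipulations; this should be essentially formal because every functor involved is natural, and the affine pushforward $F_*$ commutes with the proper pushforward $\mathbf{R}\pi_*$. Conceptually, the contrast with \autoref{thm.Tameness} is illuminating: there the Leray spectral sequence together with the triviality of $\pi_1^{\textnormal{\'{e}t}}(E)$ was needed to bridge $S$ and $T$ for the functor $(-)^F$, whereas here rationality directly collapses the discrepancy between $\sO_S$ and $\mathbf{R}\pi_*\sO_T$, letting us transplant the $F$-purity-based annihilator vanishing from $T$ down to $S$ in one step.
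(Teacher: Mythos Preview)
Your reduction to $H^2_{\pi^{-1}Z}(T,\sO_T)$ via the quasi-isomorphism $\sO_S \simeq \mathbf{R}\pi_*\sO_T$ and the identity $\mathbf{R}\Gamma_Z(S,-)\circ\mathbf{R}\pi_* \simeq \mathbf{R}\Gamma_{\pi^{-1}Z}(T,-)$ is correct and cleaner than the paper's route through the Leray spectral sequence; the $\tau$-module compatibility is indeed formal. The gap is in the final step: \autoref{pro.FrefVanishingCohSupport} concerns the \emph{sheaf} local cohomology $\sH^k_{\pi^{-1}Z}(T,\sO_T)$, and its proof is carried out affine-locally. You invoke it for the \emph{global} group $H^2_{\pi^{-1}Z}(T,\sO_T)$ on the non-affine scheme $T$, which is a different object related to the sheaf version by a spectral sequence with nontrivial higher terms. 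Phrasing it as ``the $F$-pure ring governing $T$'' obscures that no such ring exists.

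To push the argument through one would need a \emph{global} splitting of $\sO_T \to F_*\sO_T$, so that Frobenius acts split-injectively on every cohomology functor applied to $\sO_T$. But pushing such a splitting forward by $\pi_*$ would split $R \to F_*R$, i.e.\ force $R$ itself to be $F$-pure---precisely what is not assumed (and is the whole point of the non-strongly-$F$-regular KLT case). So this shortcut is genuinely blocked. The paper handles $H^2_{\pi^{-1}Z}(T,\sO_T)^{\perp F}=0$ by a Mayer--Vietoris decomposition along $\pi^{-1}Z = Z' \cup E$: the pieces supported on the strict transform $Z'$ and on the intersection points $Z'\cap E$ inject into local cohomology of the strongly $F$-regular local rings $\sO_{T,x_i}$ (where \autoref{pro.FrefVanishingCohSupport} legitimately applies), while the remaining global piece $H^2_E(T,\sO_T)$ is shown to vanish outright using $H^2_{\fram}(R)=0$, a second use of rationality. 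Your argument can be repaired by appending exactly this decomposition after your (valid) identification $H^2_Z(S,\sO_S)\cong H^2_{\pi^{-1}Z}(T,\sO_T)$.
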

\begin{proof}
The proof commences identically to the one for \autoref{thm.Tameness} up to the point where we need to prove the vanishing
\[
\big( R^1\pi_* (j_\ast \sO_{S^\circ}) \big)^{\perp F} = 0.
\]
Unfortunately, we do not know of any analogous result to \autoref{th,.ExactnessOfFrobFixedPoints} for the functor $(-)^{\perp F}$. Hence, we need to prove such a vanishing by different methods which in our case shall require rationality. However, the following proof is valid for both functors $(-)^F$ and $(-)^{\perp F}$. We proceed as follows, by \cite[I, Corollaire 2.11, (28)]{SGA2}, there is a natural exact sequence
\[
0 \to \sH_{\pi^{-1}Z}^0 (T,\sO_T) \to \sO_T \to j_* \sO_{S^{\circ}} \to \sH_{\pi^{-1}Z}^1(T,\sO_T) \to 0
\]
where we see straight away that $\sH_{\pi^{-1}Z}^0 (T,\sO_T)=0$ as $T$ is integral.
Hence, using the vanishing $R^k \pi_* \sO_T$ for all $k>0$ (see \autoref{thm.RationalityVanishing}), we have that
\[
R^1 \pi_* (j_* \sO_{S^{\circ}}) \cong R^1\pi_* \sH_{\pi^{-1}Z}^1(T,\sO_T) = H^1\big(T,\sH_{\pi^{-1}Z}^1(T,\sO_T)\big)^{\sim}
\]
where the actions of Frobenius are compatible by the naturality of the short exact sequence (i.e. these are isomorphisms of $\tau$-modules). On the other hand, we have a spectral sequence
\[
H^k\big(T,\sH_{\pi^{-1}Z}^l(T,\sO_T)\big) \Rightarrow H^{k+l}_{\pi^{-1}Z}(T,\sO_T),
\]
compatible with Frobenius actions by functoriality, and so it is a spectral sequences of $\tau$-modules. From this spectral sequence, we extract the following exact sequence (of $\tau$-modules) of low degrees (starting with $E_2^{2,0} = 0$, \cf \cite[Appendix B, paragraph before Theorem 1]{MilneEtaleCohomology})
\[
E_2^{2,0} = 0 \to E_1^2 \to E_2^{1,1} \to E_2^{3,0} = H^3\big(T, \sH^0_{\pi^{-1} Z}(T, \sO_T)\big) = 0,
\]
where 
\[
E_1^2 = \ker\Big(H^2_{\pi^{-1}Z}(T, \sO_T) \to H^0\big(T, \sH^2_{\pi^{-1} Z}(T, \sO_T)\big)\Big).
\] 
Since $H^1\big(T,\sH_{\pi^{-1}Z}^1(T,\sO_T)\big) = E_2^{1,1} \cong E_1^2$, it therefore suffices to show that
\[
H_{\pi^{-1}Z}^2(T,\sO_T)^{\perp F}=0.
\]
Write $\pi^{-1}Z=Z'\cup E$ where $Z'$ is the strict transform of $Z$ along $\pi$ (i.e. $Z'$ is the Zariski-closure of $Z \cap (S \smallsetminus \{x\})$ in $T$). We then have the Mayer--Vietoris sequence
\[
\cdots \to H^2_{Z'\cap E}(T,\sO_T) \to H^2_{Z'}(T,\sO_T) \oplus H^2_E(T,\sO_T) \to H^2_{\pi^{-1}Z}(T,\sO_T) \to H^3_{Z'\cap E}(T,\sO_T) \to \cdots
\]
which, by its naturality, is a sequence of $\tau$-modules. Notice that $(Z'\cap E)_{\mathrm{red}}$ is either empty or a finite set of closed points of $T$; say $x_1,\ldots,x_n \in T$.\footnote{Indeed, $Z \cap (S\smallsetminus \{x\})=Z \smallsetminus \{x\}$ is either empty or a finite set of codimension-$2$ points (which correspond to the generic points of the irreducible components of $Z$). Hence, in the latter case, $Z'$ consists of all the specializations of these points in $T$ which must be all closed points in $T$. In other words, $Z'$ meets $E$ only at closed points.\label{footnote}}
In particular,
\[
H^2_{Z'\cap E}(T,\sO_T) = \bigoplus_{i=1}^n H^2_{x_i}(T,\sO_T) = \bigoplus_{i=1}^n H^2_{\fram_{x_i}}(\sO_{T,x_i}) = 0, 
\]
which is zero as $T$ is Cohen--Macaulay and has dimension $3$. Hence, we have an exact sequence
\[
0 \to H^2_{Z'}(T,\sO_T) \oplus H^2_E(T,\sO_T) \to H^2_{\pi^{-1}Z}(T,\sO_T) \to \bigoplus_{i=1}^n H^3_{\fram_{x_i}}(\sO_{T,x_i})
\]
Since $(-)^{\perp F}$ is left exact and $H^3_{\fram_{x_i}}(\sO_{T,x_i})^{\perp F}=0$ by \autoref{pro.FrefVanishingCohSupport}, we obtain
\[
H^2_{\pi^{-1}Z}(T,\sO_T)^{\perp F}= H^2_{Z'}(T,\sO_T)^{\perp F} \oplus H^2_E(T,\sO_T)^{\perp F}.
\]
Next, observe that $Z' \subset T$ is a closed subscheme that passes through only finitely many closed points, namely $x_1, \ldots ,x_n$. Hence, if we denote by $\mathfrak{a}_i \subset \sO_{T,x_i}$ the ideal defining the pullback of $Z'$ along $\Spec \sO_{T,x_i} \to T$, the canonical homomorphism
\[
\rho \: H^2_{Z'}(T,\sO_T) \to  \bigoplus_{i=1}^n H^2_{\mathfrak{a}_i}(\sO_{T,x_i})
\]
is injective. To see this, note first that $\sH_{Z'}^i(T,\sO_T) = 0$ for $i=0,1$ as the complement of $Z'$ in $T$ is a big open (and $T$ is normal). Therefore, $E_2^{2,0} = 0$ and $E_2^{1,1} = 0$ in the spectral sequence
\[
H^k\big(T,\sH_{Z'}^l(T,\sO_T)\big) \Rightarrow H^{k+l}_{Z'}(T,\sO_T).
\]
From this, we deduce that $E_1^2 = \ker \big(H^2_{Z'}(T, \sO_T) \to H^0(T, \sH_{Z'}^2(T, \sO_T) \big) = 0$, i.e. the canonical map
\[
H^2_{Z'}(T,\sO_T) \to H^0\big(T,\sH_{Z'}^2(T,\sO_T)\big)
\]
is injective. Of course, $\rho$ is none other than the composition
\[
H^2_{Z'}(T,\sO_T) \to H^0\big(T,\sH_{Z'}^2(T,\sO_T)\big) \to \bigoplus_{i=1}^n H^2_{\mathfrak{a}_i}(\sO_{T,x_i})
\]
where the arrow to the right is defined via the restriction-to-stalks map and the fact that $\sH_{Z'}^2(T,\sO_T)_{x_i} = H^2_{\mathfrak{a}_i}(\sO_{T,x_i})$ (which holds as local cohomology commutes with localizations \cite[Proposition 7.15]{Twenty-FourHoursOfLocalCohomology}). Since $x_1, \ldots ,x_n$ is the list of closed points supporting the sheaf $\sH_{Z'}^2(T,\sO_T)$, that map is injective and so is $\rho$.

Now, since $\rho$ is compatible with the Frobenius actions, $H^2_{Z'}(T,\sO_T)^{\perp F}$ injects into
\[
\bigoplus_{i=1}^n H^2_{\mathfrak{a}_i}(\sO_{T,x_i})^{\perp F} =0
\]
and so $H^2_{Z'}(T,\sO_T)^{\perp F} = 0$ (the above vanishing is obtained again by using \autoref{pro.FrefVanishingCohSupport} and the strong $F$-regularity of $T$). Thus, we are left with proving the vanishings $H^2_E(T,\sO_T)^{\perp F}=0$. In fact, we have that $H^2_E(T,\sO_T)=0$. To see this, consider the long exact sequence
\[
\cdots \to H^1(T,\sO_T) \to H^1(T\smallsetminus E, \sO_{T \smallsetminus E}) \to H^2_E(T,\sO_T) \to H^2(T,\sO_T) \to \cdots
\]
noticing that $H^1(T,\sO_T)=0 =H^2(T,\sO_T)$ (this is the vanishing $R^i\pi_* \sO_T =0$, $i=1,2$). Hence,
\[
H^2_E(T,\sO_T) \cong H^1(T\smallsetminus E, \sO_{T \smallsetminus E}) \cong H^1\big(S \smallsetminus \{x\}, \sO_{S \smallsetminus \{x\}}\big) \cong H^2_{\fram}(R)=0,
\]
where the last vanishing holds as $S =\Spec R$ is Cohen--Macaulay.
\end{proof}

\begin{corollary} \label{cor.fulltameness}
Working in \autoref{setup}, suppose $\kay$ is algebraically closed of characteristic $p>5$ and $d=3$. If $S$ is rational, then the restriction map
\[
\varrho^1_S(G) \: H^1(S_{\mathrm{fl}}, G) \to H^1(S^{\circ}_{\mathrm{fl}},G)
\]
is surjective for all finite unipotent group-schemes $G/\kay$. In particular, if $Y \to X$ is a $G$-quotient by a finite unipotent group-scheme $G/\kay$ that is a $G$-torsor in codimension-$1$, then $Y \to X$ is a $G$-torsor.
\end{corollary}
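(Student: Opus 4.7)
The plan is to perform dévissage on a composition series of $G$ to reduce to the two simple cases $G = \mathbb{Z}/p\mathbb{Z}$ and $G = \alpha_p$, and then handle each via an Artin--Schreier-type sequence combined with the tameness vanishings already established in \autoref{cor.PGroupTorsors} and \autoref{thm.UnipotentTameness}.

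First, I would invoke the structure theory of finite unipotent group schemes over an algebraically closed field of characteristic $p$: every such $G$ admits a central normal filtration whose successive quotients are isomorphic to either $\mathbb{Z}/p\mathbb{Z}$ or $\alpha_p$. Arguing by induction on $|G|$, pick a central short exact sequence $1 \to H \to G \to Q \to 1$ with $H$ one of these two basic blocks. Since $H$ is central, flat cohomology yields a seven-term exact sequence of pointed sets natural in the base; comparing the sequences for $S$ and $S^\circ$ along the restriction ladder, surjectivity of $\varrho^1_S(G)$ reduces formally to surjectivity of $\varrho^0_S(Q)$, $\varrho^1_S(H)$, and $\varrho^1_S(Q)$. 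The first is automatic: $Q$ is affine and $\Gamma(S^\circ, \sO_{S^\circ}) = R = \Gamma(S, \sO_S)$ by bigness of $S^\circ$ and normality of $S$, whence $Q(S^\circ) = Q(S)$.

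The case $G = \mathbb{Z}/p\mathbb{Z}$ is immediate from \autoref{cor.PGroupTorsors}, which asserts $H^1(S^\circ_{\mathrm{fl}}, \mathbb{Z}/p\mathbb{Z}) = \ast$. For $G = \alpha_p$, I would use the fppf-exact sequence
\[
0 \to \alpha_p \to \mathbb{G}_{\mathrm{a}} \xrightarrow{F} \mathbb{G}_{\mathrm{a}} \to 0.
\]
Its long exact sequence on $S$, combined with $H^1(S, \sO_S) = 0$ (affineness), gives $H^1(S_{\mathrm{fl}}, \alpha_p) \cong R/F(R)$; on $S^\circ$, using $\Gamma(S^\circ, \sO_{S^\circ}) = R$, one obtains the short exact sequence
\[
0 \to R/F(R) \to H^1(S^\circ_{\mathrm{fl}}, \alpha_p) \to H^1(S^\circ, \sO_{S^\circ})^{\perp F} \to 0.
\]
Under this identification, $\varrho^1_S(\alpha_p)$ is the inclusion of the leftmost term, and \autoref{thm.UnipotentTameness}---this is precisely where the rationality hypothesis enters---forces the rightmost term to vanish. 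Hence $\varrho^1_S(\alpha_p)$ is an isomorphism.

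For the ``in particular'' statement, a $G$-quotient $Y \to X$ that is a $G$-torsor on a big open $X^\circ \subseteq X$ defines, after passing to each strict henselization at a closed point, a class in $H^1(S^\circ_{\mathrm{fl}}, G)$; the just-proved surjectivity lifts it to a $G$-torsor on $S$, and this lift must coincide with the pullback of $Y$ itself by S$_2$-ness of the normal $G$-quotient. The main technical obstacle I anticipate is organizing the non-abelian dévissage: producing the central composition series with $\mathbb{Z}/p\mathbb{Z}$ or $\alpha_p$ quotients and carrying out the attendant diagram chase in non-abelian flat cohomology is essentially standard but requires careful bookkeeping of pointed-set exactness.
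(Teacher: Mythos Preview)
Your approach is essentially the same as the paper's: the paper simply states that the local statement is a formal consequence of \autoref{thm.Tameness} and \autoref{thm.UnipotentTameness}, deferring to \cite[\S4.3]{CarvajalFiniteTorsors} for the d\'evissage, and handles the global statement by noting that the torsor condition is \'etale-local and open. You have correctly unpacked that reference.

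One small point to tighten in your reduction step: writing that surjectivity of $\varrho^1_S(G)$ ``reduces formally to surjectivity of $\varrho^0_S(Q)$, $\varrho^1_S(H)$, and $\varrho^1_S(Q)$'' omits the obstruction to lifting the class $\bar\eta\in H^1(S,Q)$ back to $H^1(S,G)$, which lives in $H^2(S_{\mathrm{fl}},H)$. This vanishes here because $S$ is the spectrum of a strictly henselian ring (so $H^i_{\mathrm{\acute{e}t}}(S,\bZ/p\bZ)=0$ for $i>0$) and is affine (so $H^i(S,\sO_S)=0$ for $i>0$, whence $H^2_{\mathrm{fl}}(S,\bm{\alpha}_p)=0$ via the sequence $0\to\bm{\alpha}_p\to\bG_{\mathrm a}\xrightarrow{F}\bG_{\mathrm a}\to 0$). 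You already flag the non-abelian bookkeeping as the main technical point, so just make this vanishing explicit when you write it up.
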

\begin{proof}
As in \cite[\S4.3]{CarvajalFiniteTorsors}, the first, local statement is a formal consequence of \autoref{thm.Tameness} and \autoref{thm.UnipotentTameness}. The second, global statement follows because checking whether a given $G$-quotient is a torsor can be done locally in the \'etale topology. Also, recall that being a torsor is an open condition \cite[Corollary 3.13]{CarvajalFiniteTorsors}
\end{proof}

\subsubsection{Questions of interest}

\autoref{cor.fulltameness} raises the following three questions, which should be compared to the ones in \cite[\S 4.7]{CarvajalFiniteTorsors}.
\begin{question} \label{1st.question}
Work in \autoref{setup}. Is the restriction map $H^1(S_{\mathrm{fl}}, G) \to H^1(S^{\circ}_{\mathrm{fl}}, G)$ surjective for all unipotent group-schemes $G/\kay$ if $p$ is large enough?
\end{question}

\begin{question} \label{2nd.question}
Work in \autoref{setup}. Is the restriction map $H^1(S_{\mathrm{fl}}, G) \to H^1(S^{\circ}_{\mathrm{fl}}, G)$ surjective for all unipotent group-schemes $G/\kay$ if $S$ is rational?
\end{question}

\begin{question} \label{3rd.question}
Letting $\kay$ be an algebraically closed field of characteristic $p>5$, is there an action of $\bm{\alpha}_p$ on $\hat{\bA}^3_{\kay}$ whose quotient $\hat{\bA}^3_{\kay} \to \hat{\bA}^3_{\kay}/\bm{\alpha}_p$ is an $\bm{\alpha}_p$-torsor in codimension $1$ and $\hat{\bA}^3_{\kay}/\bm{\alpha}_p$ is KLT?
\end{question}

\begin{remark}
As mentioned in \autoref{rem.Rationality}, \cite{ArvidssonBernasconiLaciniKVVforLogDelPezzosp>5} shows that KLT threefold singularities are rational in characteristics $p>5$. In particular, \autoref{cor.fulltameness} answers \autoref{3rd.question} affirmatively. Of course, it also answers \autoref{1st.question} affirmatively in the $3$-dimensional case by taking $p>5$.
\end{remark}

\section{On the structure of tame covers via local tame fundamental groups} \label{sec.LocalToGlobal}

In this section, we establish an analog of the main results in \cite{BhattCarvajalRojasGrafSchwedeTucker} (\cf \cite{GrebKebekusPeternellEtaleFundamental, StibitzFundamentalGroups}) in the context of tame covers and tame fundamental groups. In \autoref{sec.finiteness}, we will apply this result to the case $X$ is the source of a generalized PLT blowup and $P$ is the corresponding Koll\'ar component. We consider the following setup and notation.

\begin{setup} \label{setup.LocalToGlobal}
Let $X$ be a normal integral scheme, and let $Z \subset X$ be a closed subscheme of codimension at least $2$ with open complement $U=X \smallsetminus Z$. Let $P$ be a prime divisor on $X$---whose restriction to $U$ we denote by $P$ as well---that is itself a normal (integral) scheme. We will denote the ideal sheaf corresponding to $P$ by $\p$. 
Given a geometric point $\bar{x} \to U$, we denote by $P_{\bar{x}}$ the pullback of $P$ to $U_{\bar{x}} \coloneqq \Spec \sO_{U,\bar{x}}^{\mathrm{sh}}$.
\end{setup}

\begin{remark} \label{rem.GlobalToLocalPrimality}
Working in \autoref{setup.LocalToGlobal}, observe that $P_{\bar{x}}$ is a prime divisor on $\sO_{U,\bar{x}}^{\mathrm{sh}}$. Indeed, note that $\sO_{U,\bar{x}}^{\mathrm{sh}}$ is normal (as it is the colimit of \'etale $\sO_{U,x}$-algebras (\cite[\href{https://stacks.math.columbia.edu/tag/033C}{Lemma 033C}]{stacks-project}, \cite[\href{https://stacks.math.columbia.edu/tag/037D}{Lemma 037D}]{stacks-project}) and $\sO_{U,x}$ is normal). Similarly, $P_{\bar{x}}$ is given by the spectrum of the strict henselization of $\sO_{U,x}/\p_x$ (\cite[\href{https://stacks.math.columbia.edu/tag/05WS}{Lemma 05WS}]{stacks-project}) and thus also normal since $\sO_{U,x}/\p_x$ is normal. Since $P_{\bar{x}}$ is the spectrum of a local ring it is integral. 
\end{remark}

\subsubsection*{\'Etale and tame fundamental groups}
For the reader's convenience, we briefly recall the fundamental groups of interest in this paper. For further details, we recommend \cite{MurreLecturesFundamentalGroups,GrothendieckMurreTameFundamentalGroup,CadoretGaloisCategories,CarvajalRojasStaeblerTameFundamentalGroupsPurePairs}. Let $X$ be a normal integral scheme with field of functions $K$, and let $P$ be a prime divisor on $X$. Denoting by $\eta \in X$ the generic point of $X$, we write $\bar{\eta} \to X$ for a chosen geometric generic point, which amounts to the choice of a separable closure $K^{\mathrm{sep}}$ of $K$. We denote by $\mathsf{FEt}(X)$ the Galois category of finite \'etale covers over $X$. Of course, $\mathsf{FEt}(K)$ is nothing but the Galois category (whose minimal/connected objects consist) of finite separable extensions $L/K$, and $\mathsf{FEt}(X)$ can be realized as the full Galois subcategory of $\mathsf{FEt}(K)$ given by those finite separable extensions $L/K$ such that the normalization of $X$ in $L$; say $X^L$, is \'etale over $X$. In particular, we have a canonical surjective homomorphism of topological groups
\[
\pi_1^{\mathrm{\acute{e}t}}(K)=\Gal(K^{\mathrm{sep}}/K) \coloneqq \varprojlim_{\substack{ K^{\mathrm{sep}}/L/K \\ L/K \text{ Galois}}} {\Gal(L/K)} \twoheadrightarrow \pi_1^{\mathrm{\acute{e}t}}(X) = \varprojlim_{\substack{ K^{\mathrm{sep}}/L/K \\ L/K \text{ Galois} \\ X^L/X \in \mathsf{FEt}(X)}} {\Gal(L/K)}
\]
where we have implicitly chosen $\bar{\eta} \to X$ as our common base point. However, we will always drop it from our notation.

Further, we denote by $\mathsf{Rev}^P(X)$ the Galois category of tame covers over $X$ with respect to $P$. More precisely, $\mathsf{Rev}^P(X)$ is the full Galois subcategory of $\mathsf{FEt}(K)$ (whose minimal/connected objects are) given by the finite separable extensions $L/K$ such that the normalization of $X$ in $L$, say $Y \to X$, is such that $Y_{X \smallsetminus P} \to X \smallsetminus P$ is \'etale and $L/K$ is tamely ramified with respect to the DVR $\sO_{X,P} \subset K$. Thus, we consider
\[
\pi_1^{\mathrm{t},P}(X)\coloneqq \varprojlim_{\substack{K^{ \mathrm{sep}}/L/K\\ L/K \text{ Galois}\\ X^L/X \in \mathsf{Rev}^P(X)}} {\Gal(L/K)}
\]
where the limit traverses all finite Galois extensions $L/K$ (inside $K^{\mathrm{sep}}$) such that $X^L \to X$ is \'etale away from $P$ but at worst tamely ramified over the generic point of $P$.

Summing up, we have canonical continuous and surjective homomorphisms
\[
\Gal(K^{\mathrm{sep}}/K) \twoheadrightarrow \pi_1^{\mathrm{\acute{e}t}}(X) \twoheadrightarrow \pi_1^{\mathrm{t},P}(X).
\]

We recall now some terminology from \cite{CarvajalRojasStaeblerTameFundamentalGroupsPurePairs} which will be important in the following discussion. To do so, it is important to notice that given a Galois object $f\: Y \to X$ in $\mathsf{Rev}^{P}(X)$ of (generic) degree $d_f$ we have an equality
\[
d_f=n_f \cdot e_f \cdot i_f
\]
where $n_f$ is the number of irreducible components of $f^{-1} P$,  $e_f$ is the \emph{ramification index}, and $i_f$ is the \emph{inertial degree}. More precisely, if $Q_1, \ldots, Q_{n_f}$ are the irreducible components of $\bigl(f^{-1}(P)\bigr)_{\mathrm{red}}$ and so prime divisors on $Y$, then $e_f$ is the (common) ramification index of the extension of DVRs $\sO_{X,P} \subset \sO_{Y,Q_j}$ for all $j=1,\ldots, n_f$ and $i_f \coloneqq \big[K(Q_j):K(P)\big]$ for all $j=1,\ldots, n_f$.

\begin{terminology}[{\cite[Terminology 3.21]{CarvajalRojasStaeblerTameFundamentalGroupsPurePairs}}]
 Le us recall the following properties on the Galois category $\mathsf{Rev}^{P}(X)$:
\begin{enumerate}
    \item \emph{$P$-irreducibility}: Every connected cover $f\: Y \to X$ in $\mathsf{Rev}^{P}(X)$ satisfies that $Q\coloneqq \bigl(f^{-1}(P)\bigr)_{\mathrm{red}}$ is a prime divisor on $Y$. If $f$ is Galois, this means $n_f = 1$. \emph{In this case, we shall often represent an object in $\mathsf{Rev}^P(X)$ as a pair $(Y,Q)$.}
    \item \emph{Inertial boundedness}: There exists $N \in \bN$ such that $i_f \leq N$ for all Galois objects $f\: Y \to X$ in $\mathsf{Rev}^{P}(X)$.
    \item \emph{Inertial tameness}: The inertial degree $i_f$ is prime-to-$p$ for all Galois objects $f\: Y \to X$ in $\mathsf{Rev}^{P}(X)$.
    \item \emph{Inertial decantation}: Assuming the $P$-irreducibility of $\mathsf{Rev}^{P}(X)$, inertial decantation means that every Galois cover $f\: Y \to X$ in $\mathsf{Rev}^{P}(X)$ dominates a quasi-\'etale Galois cover $Y' \to X$ in $\mathsf{Rev}^{P}(X)$ whose degree is $i_f$. When such degree is $1$, we say that $f$ is \emph{totally ramified} (i.e. $d_f=e_f$).
\end{enumerate}
\end{terminology}

\begin{remark} \label{rem.PiiredImpliesInDecantation}
The \emph{$P$-irreducibility} of $\mathsf{Rev}^{P}(X)$ implies its inertial decantation; see \cite[Lemma 3.23]{CarvajalRojasStaeblerTameFundamentalGroupsPurePairs}.
\end{remark}

\begin{theorem} \label{thm.FundamentalTheorem}
Work in \autoref{setup.LocalToGlobal} and suppose that:
\begin{enumerate}[label=(\arabic*)]
    \item Every connected cover $f\: V \to U$ in $\mathsf{Rev}^P(U)$ is such that $f^{-1}P$ is connected,
    \item $\mathsf{Rev}^{P_{\bar{x}}}(U_{\bar{x}})$ is $P_{\bar{x}}$-irreducible and inertially bounded for all geometric points $\bar{x} \to U$.
\end{enumerate}
 Then, the Galois category $\mathsf{Rev}^P(U)$ has the three following properties:
 \begin{enumerate}[label=(\Alph*)]
     \item $P$-irreducibility.
     \item Inertial decantation.
     \item There exists a quasi-\'etale Galois cover $(\Bar{U},\Bar{P})$ in $\mathsf{Rev}^P(U)$ such that every quasi-\'etale Galois cover in $\mathsf{Rev}^{\Bar{P}}(\Bar{U})$ is \'etale.
 \end{enumerate}
 Furthermore, suppose that $\bar{P}$ is normal and $\pi_1^{\mathrm{\acute{e}t}}(\bar{P} \cap \bar{U})$ is finite. Then, there exists an \'etale cover $(\tilde{U}, \Tilde{P})$ in $\mathsf{Rev}^{\bar{P}}(\Bar{U})$ (with $\Tilde{P}$ normal) such that for all Galois covers $(V,Q)$ in $\mathsf{Rev}^{\Tilde{P}}\bigl(\Tilde{U}\bigr)$ we have:
     \begin{enumerate}[label=(\roman*)]
    \item $(V,Q)/(\tilde{U}, \Tilde{P})$ is totally ramified,  
    \item $Q \to \tilde{P}$ is an isomorphism and so $Q$ is normal,
    \item $(V,Q)/(\tilde{U}, \Tilde{P})$ is \'etale-locally a Kummer cover over $\tilde{U}_{\mathrm{reg}}$. That is, given a geometric point $\bar{x} \to \Tilde{U}_{\mathrm{reg}}$, there exists an \'etale neighborhood $W \to \Tilde{U}$ of $\bar{x}$ such that $V_{W} \to W$ is isomorphic in $\mathsf{Rev}^{P_W}(W)$ to $W[T]/(T^{e}-w) \to W$, where $\Div w = P_{W}$ for some $0 \neq w \in \Gamma(W,\sO_W)$, and $e=[K(V):K(\tilde{U})]$ is the ramification index of $(V,Q)/(\tilde{U}, \Tilde{P})$.
\end{enumerate}
\end{theorem}

\begin{remark}\label{rem.FunRem}
Property (A) in \autoref{thm.FundamentalTheorem} is truly essential. In its pressence, we can think of connected objects in $\mathsf{Rev}^P(U)$ as pairs $(V,Q)$ where $V/U$ is a tamely ramified cover with respect to $P$ and $Q$ is the only prime divisor on $V$ lying over $P$. In this way, $V\smallsetminus Q \to U \smallsetminus P$ is \'etale and $\sO_{U,P} \to \sO_{V,Q}$ is a tamely ramified extension of DVRs, so that $[K(V):K(U)]=e \cdot [K(Q):K(P)]$, where $e$ is the ramification index. Consequently, the Galois category consisting of the objects of $\mathsf{Rev}^P(U)$ that lie over a given (connected) object $(V,Q)$ is the same as the Galois category $\mathsf{Rev}^Q(V)$. In particular, given a Galois object $(V,Q)$ in $\mathsf{Rev}^P(U)$, we have a short exact sequence of topological groups
\[
1 \to \pi_1^{\mathrm{t},Q}(V) \to  \pi_1^{\mathrm{t},P}(U) \to \Gal(V/U) \to 1. 
\]
\end{remark}

Before getting into the proof of \autoref{thm.FundamentalTheorem}, we consider instructive to have the following preliminary discussion on stratifications and Abhyankar's lemma.

\subsection{Stratifications}
Let $X$ be a normal integral scheme. Suppose that we are given a tower of quasi-\'etale Galois covers
\[
X = X_0 \xleftarrow{\gamma_0} X_1 \xleftarrow{\gamma_1} X_2 \xleftarrow{\gamma_2} \cdots
\]
such that $\gamma_{i,\bar{x}}$ is \'etale for sufficiently large $i$ for every geometric point $\bar{x} \to X$. More precisely, suppose that for all geometric points $\bar{x} \to X$ there is $N_{\bar{x}} \in \bN$ such that $\gamma_{i,\bar{x}}$ is \'etale for all $i \geq N_{\bar{x}}$. It is natural to ask whether this implies the existence of a uniform constant $N \in \bN$ (independent of the geometric point) such that $\gamma_i$ is \'etale for all $i \geq N$. This question was considered in the works \cite{GrebKebekusPeternellEtaleFundamental,BhattCarvajalRojasGrafSchwedeTucker}, where positive answers were given over the complex numbers and in positive characteristic, respectively. In both cases, a type of stratification was required. In \cite{GrebKebekusPeternellEtaleFundamental}, the authors employed the so-called Whitney's stratification, whereas in \cite{BhattCarvajalRojasGrafSchwedeTucker} the authors used a result of O.~Gabber \cite[Th\'eor\`eme 1.1, 1.3]{TravauxdeGabber} to construct an analogous stratification. It is worth noting that C.~Stibitz \cite{StibitzFundamentalGroups} has recently introduced a characteristic free stratification; based on de Jong's alterations \cite{deJongAlterations}, that is suitable to answer this question positively.

\begin{theorem} \label{cor.LocalToGlogalQuasiEtaleCovers}
Let $X$ be a normal integral excellent scheme. Let
\[
X = X_0 \xleftarrow{\gamma_0} X_1 \xleftarrow{\gamma_1} X_2 \xleftarrow{\gamma_2} \cdots
\]
be a tower of quasi-\'etale Galois covers over $X$. Suppose that for every geometric point $\bar{x} \to X$ there is $N_{\bar{x}} \in \bN$ such that $\gamma_{i,\bar{x}}$ is \'etale (hence trivial) for all $i \geq N_{\bar{x}}$. Then, there is $N \in \bN$ such that $\gamma_i$ is \'etale for all $i \geq N$.
\end{theorem}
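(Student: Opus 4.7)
The plan is to reduce the pointwise hypothesis to a uniform bound via a finite stratification of $X$, following the paradigm of \cite{BhattCarvajalRojasGrafSchwedeTucker}. It is convenient to work with the composite covers $\delta_i \coloneqq \gamma_0 \circ \cdots \circ \gamma_{i-1}\colon X_i \to X$, which are quasi-\'etale and (after possibly enlarging to a common Galois closure at each level) Galois over $X$. Note that \'etaleness of $\delta_i$ for all $i \geq N$ is equivalent to \'etaleness of $\gamma_i$ for all $i \geq N$: if $\delta_i$ and $\delta_{i+1} = \delta_i \circ \gamma_i$ are both \'etale, then $\gamma_i$ is \'etale by the ``\'etale over \'etale'' characterization; conversely, \'etaleness propagates along the composite. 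So it suffices to show $\delta_i$ is \'etale for all sufficiently large $i$.

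The key step is to invoke a stratification theorem controlling quasi-\'etale Galois covers of $X$---either Gabber's stratification \cite{TravauxdeGabber} in positive characteristic (as exploited in \cite{BhattCarvajalRojasGrafSchwedeTucker}), or Stibitz's characteristic-free refinement based on de Jong's alterations \cite{StibitzFundamentalGroups,deJongAlterations}---to produce a \emph{finite} collection of geometric points $\bar{s}_1,\dots,\bar{s}_m \to X$ with the following property: for any quasi-\'etale Galois cover $f\colon Y \to X$ with $Y$ integral, $f$ is \'etale on $X$ if and only if $f$ is \'etale at each $\bar{s}_\alpha$. With this controlling set in hand, the remainder of the argument is a pigeonhole. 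The hypothesis supplies for each $\alpha \in \{1,\dots,m\}$ an integer $N_\alpha$ such that $\delta_{i,\bar{s}_\alpha}$ is \'etale for every $i \geq N_\alpha$ (this is a direct consequence of $\gamma_{j,\bar{s}_\alpha}$ being \'etale for every $j \leq i-1$ with $j \geq N_{\bar{s}_\alpha}$, after increasing $N_\alpha$ if necessary). Setting $N \coloneqq \max_\alpha N_\alpha$---finite because there are only finitely many $\bar{s}_\alpha$---we conclude that for every $i \geq N$, the cover $\delta_i$ is \'etale at each $\bar{s}_\alpha$ simultaneously, and the controlling property forces $\delta_i$ to be \'etale on all of $X$.

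The hard part will be securing the finite controlling collection $\{\bar{s}_\alpha\}$: one needs a stratification that simultaneously detects \'etaleness for \emph{every} quasi-\'etale Galois cover of $X$, not merely a fixed one. This uniform-across-all-covers feature is precisely the deep input extracted from the work of Gabber or Stibitz, and is where the real content sits. Once such a stratification is available, the rest of the argument---reducing to composite covers, applying the pigeonhole, and upgrading back to \'etaleness of each $\gamma_i$---is formal.
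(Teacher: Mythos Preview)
Your overall strategy---invoke a stratification (Gabber or Stibitz) to reduce to finitely many controlling geometric points, then take a maximum---is the same as the paper's, which simply cites \cite{BhattCarvajalRojasGrafSchwedeTucker} (and \cite{StibitzFundamentalGroups} as an alternative). However, your execution contains a genuine gap in the passage from the $\gamma_i$ to the composites $\delta_i$.

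The problem is the sentence claiming that the hypothesis supplies $N_\alpha$ with $\delta_{i,\bar{s}_\alpha}$ \'etale for all $i \geq N_\alpha$. This is false: the hypothesis only tells you $\gamma_{j,\bar{s}_\alpha}$ is \'etale for $j \geq N_{\bar{s}_\alpha}$, but $\delta_i = \gamma_{i-1} \circ \cdots \circ \gamma_0$ still involves the early maps $\gamma_0,\dots,\gamma_{N_{\bar{s}_\alpha}-1}$, which may well be ramified at $\bar{s}_\alpha$. So $\delta_{i,\bar{s}_\alpha}$ need never become \'etale, no matter how large $i$ is. (Relatedly, your asserted equivalence between ``$\delta_i$ \'etale for all $i\geq N$'' and ``$\gamma_i$ \'etale for all $i\geq N$'' only holds in one direction: the converse fails for exactly this reason.) In short, the controlling points you extract test \'etaleness of covers of $X$, but you cannot feed the $\gamma_i$ into that test directly since they are covers of $X_i$, and you cannot feed the $\delta_i$ in because the hypothesis says nothing about them.

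The way \cite{BhattCarvajalRojasGrafSchwedeTucker} avoids this is that Gabber's stratification is not merely a finite set of test points for covers of $X$: it is a stratification whose preimage in each $X_i$ is again a stratification with the same controlling property for quasi-\'etale Galois covers of $X_i$. This compatibility under pullback is what lets one apply the test directly to each $\gamma_i$ (over $X_i$) while still using only the finitely many generic points of strata on $X$; Stibitz's alteration-based stratification has the analogous feature. Once you use this stronger form of the stratification, the pigeonhole step goes through exactly as you wrote, applied to the $\gamma_i$ themselves rather than to the $\delta_i$.
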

\begin{proof}
This follows straight away from \cite{BhattCarvajalRojasGrafSchwedeTucker}; \cf \cite[Remark 4.2]{BhattCarvajalRojasGrafSchwedeTucker} to see why bounding the order of the local fundamental groups in \cite[Proposition 3.2]{BhattCarvajalRojasGrafSchwedeTucker} is superfluous. Note that, if $X_i$ admits a regular alteration for all large enough $i$, another proof can be found in \cite[\S4, proof of $(i) \Rightarrow (iii)$]{StibitzFundamentalGroups}.
\end{proof}

\subsection{Abhyankar's lemma}

Next, we come to the \'etale-local description of normal totally ramified Galois covers in \autoref{thm.FundamentalTheorem}, which we refer to as Abhyankar's lemma, \cf \cite[\href{https://stacks.math.columbia.edu/tag/0EYG}{Tag 0EYG}]{stacks-project}. The following result is based on a generalization of \cite[\href{https://stacks.math.columbia.edu/tag/0EYG}{Tag 0EYG}]{stacks-project} to singular divisors given in \cite{CarvajalRojasStaeblerTameFundamentalGroupsPurePairs}. 

\begin{theorem} \label{thm.InversionAbhyankar}
Work in the same setup of \autoref{thm.FundamentalTheorem}. Let $f\: (V,Q) \to (U,P)$ be a totally ramified Galois cover in $\mathsf{Rev}^{P}(U)$ of degree $e$. Then, for a given geometric point $\bar{x} \to X$ with $x\in P \cap U_{\mathrm{reg}}$, there exists an \'etale neighborhood $W \to U$ of $\bar{x}$ such that $V_W \to W $ is 
isomorphic in $\mathsf{Rev}^{P_W}(W)$ to $W[T]/(T^{e}-w) \to W$, where $\Div w = P_W$ for some $0 \neq w \in \Gamma(W,\sO_W)$.
\end{theorem}

\begin{proof}
We use the notation of \autoref{thm.FundamentalTheorem} and \autoref{setup.LocalToGlobal}. Consider the cartesian diagram
\begin{equation} \label{eqn.StrictLocalizationGalois}
\xymatrix{
V \ar[d]_-{f} & V_{\bar{x}} \ar[l] \ar[d]^-{f_{\bar{x}}} \\
U & U_{\bar{x}} \ar[l]
}
\end{equation}
where $f_{\bar{x}}$ is an object in $\mathsf{Rev}^{P_{\bar{x}}}(U_{\bar{x}})$; see \cite[Lemma 2.2.8]{GrothendieckMurreTameFundamentalGroup}.\footnote{Note that $V_{\bar{x}}$ is normal as $V_{\bar{x}} \to V$ is pro-\'etale and $V$ is normal.} Since $f\: Q \to P$ is a finite birational (as $f\: V \to U$ is totally ramified) morphism and $P$ is normal, we obtain that $f\: Q \to P$ is an isomorphism (this is a simple case of Zariski's main theorem). Therefore, since $x\in P$, there is only one point of $V_{\bar{x}}$ lying over $\bar{x}$. Hence, $f_{\bar{x}}$ is connected and totally ramified. In particular, $V_{\bar{x}}$ is a connected normal scheme, which has the following consequence:

\begin{claim}
In \autoref{eqn.StrictLocalizationGalois}, $f_{\bar{x}}$ is Galois. 
\end{claim}
\begin{proof}[Proof of claim]
Let $W \to U$ be a connected \'etale neighborhood of $\bar{x} \to U$, and consider the diagram
\begin{equation*}
\xymatrix{
V \ar[d]_-{f} & V_W \ar[d]^-{f_W} \ar[l] & V_{\bar{x}} \ar[l] \ar[d]^-{f_{\bar{x}}} \\
U & W \ar[l] & U_{\bar{x}} \coloneqq \Spec \sO_{U,\bar{x}}^{\mathrm{sh}} \ar[l]
}
\end{equation*}
where both inner squares are cartesian and the outer square is none other than \autoref{eqn.StrictLocalizationGalois}. In particular, $V_W$ is a normal integral scheme.

Now, by localizing the left cartesian diagram at the generic point of $U$, we conclude that $K(V_W) = K(V) \otimes_{K(U)} K(W)$, this because all the four arrows in this diagram are quasi-finite morphisms in between irreducible schemes. 
In other words, $K(V)$ and $K(W)$ are subfields of $K(W)$ whose compositum is $K(V_W)$. In particular,  $K(V_W)/K(W)$ is Galois by \cite[Chapter VI, Theorem 1.12]{LangAlgebra}, which is to say $f_W$ is Galois as an object of $\mathsf{Rev}^{P_W}(W)$.

To prove $K(V_{\bar{x}})/K(U_{\bar{x}})$ is Galois, we notice that separability is clear as \'etaleness is preserved under base change. We must explain normality of $K(V_{\bar{x}})/K(U_{\bar{x}})$. To this end, we consider $0 \neq b \in K(V_{\bar{x}})$ and let $\alpha(t) = t^n +a_{n-1}t^{n-1}+\cdots + a_1t+a_0 \in K(U_{\bar{x}})[t]$ be its minimal polynomial. By clearing denominators, let $0 \neq a \in \sO_{U,\bar{x}}^{\mathrm{sh}}$ be such that $a \cdot  \alpha(t)$ has coefficients in $\sO_{U,\bar{x}}^{\mathrm{sh}}$.
Moreover, we write $b=b'/b''$ where $0 \neq  b', b'' \in \Gamma(V_{\bar{x}}, \sO_{V_{\bar{x}}}) = f_*\sO_{V} \otimes_{\sO_U} \sO_{U,\bar{x}}^{\mathrm{sh}}$ (where this is an equality of $\sO_{U,\bar{x}}^{\mathrm{sh}}$-algebras). Recall that $\sO_{U,\bar{x}}^{\mathrm{sh}} = \varinjlim_{W} \Gamma(W,\sO_W)$ where the colimit traverses all \'etale neighborhoods $W \to U$ of $\bar{x} \to U$. In particular, we may write \[
\Gamma(V_{\bar{x}}, \sO_{V_{\bar{x}}}) = f_*\sO_{V} \otimes_{\sO_U} \sO_{U,\bar{x}}^{\mathrm{sh}} = \varinjlim_W \big( f_* \sO_V \otimes_{\sO_U} \Gamma(W,\sO_W) \big) = \varinjlim_W \Gamma(V_W,\sO_{V_W}),
\]
where $f_*\sO_V \otimes_{\sO_U} \Gamma(W,\sO_W) = \Gamma(V_W,\sO_{V_W})$ is an equality of $\Gamma(W,\sO_W)$-algebras.
Let $W \to U$ be a sufficiently small connected \'etale neighborhood of $\bar{x} \to U$ such that $\Gamma(W, \sO_W) \ni a, aa_{n-1}, \ldots, aa_0$,\footnote{To be precise, we choose $W$ such that the image of $\Gamma(W,\sO_W)$ under the canonical homomorphism $\Gamma(W,\sO_W) \to \sO_{U,\bar{x}}^{\mathrm{sh}}$ contains $a$ and $aa_i$ for all $i$.} and such that $b',b'' \in f_*\sO_V \otimes_{\sO_U} \Gamma(W,\sO_W) = \Gamma(V_W,\sO_{V_W})$. In particular, we have that $a_i \in K(W)$ for all $i=1,\ldots,n-1$, and further $b\in K(V_W)$. However, we proved above that $K(V_W)/K(W)$ is Galois (so normal). Hence, the polynomial $\alpha(t)$ must split in $K(V_W)$ and consequenctly in $K(V_{\bar{x}})$. 
\end{proof}

By assumption $x \in U_{\mathrm{reg}}$ and so $U_{\bar{x}} \coloneqq \Spec \sO_{U,\bar{x}}^{\mathrm{sh}}$ is a regular strictly henselian scheme and in particular factorial. Hence, $P_{\bar{x}} = \Div w$ for some $0 \neq w \in U_{\bar{x}} \coloneqq \Spec \sO_{U,\bar{x}}^{\mathrm{sh}}$. Moreover, by purity of the branch locus for regular schemes, it follows that every Galois quasi-\'etale cover over $U_{\bar{x}} \coloneqq \Spec \sO_{U,\bar{x}}^{\mathrm{sh}}$ is trivial. Putting everything together, we conclude that $\pi_1^{\mathrm{t},P_{\bar{x}}}(U_{\bar{x}})= \hat{\Z}^{(p)}$ by \cite[Theorem 3.29]{CarvajalRojasStaeblerTameFundamentalGroupsPurePairs}. Hence, $V_{\bar{x}} \to U_{\bar{x}}$ is further cyclic and so a Kummer cover with ramification index $e$, \cf \cite[Lemma 3.34]{CarvajalRojasStaeblerTameFundamentalGroupsPurePairs}. This description remains valid on a sufficiently small \'etale neighborhood $W$ of $\bar{x} \to U$ where $\Div w = P_W$.
\end{proof}

The following result will be useful in \autoref{sec.finiteness}.
\begin{corollary} \label{cor.UseAbhyankarLemma}
With the same hypothesis as in \autoref{thm.InversionAbhyankar}, set $U' \coloneqq U_{\mathrm{reg}} \smallsetminus P$. Additionally, suppose that $P$ contains all closed points of $U$. Then,  $V_{U'} \to {U'}$ is a connected $\bm{\mu}_{e}$-torsor in the sense of \cite[III, \S4]{MilneEtaleCohomology} (i.e. a torsor in the fppf or flat topology).
\end{corollary}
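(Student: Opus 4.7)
The plan is to verify three properties of the restriction $V_{U'} \to U'$: that it is (a) finite étale, (b) connected, and (c) a $\bm{\mu}_e$-torsor in the flat topology.

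For (a) and (b): by the definition of $\mathsf{Rev}^P(U)$, the morphism $V \to U$ is étale over $U \smallsetminus P$, and $U' \subseteq U \smallsetminus P$, so $V_{U'} \to U'$ is finite étale of degree $e$. Since $V$ is connected and normal, it is integral, and $V_{U'}$ is a nonempty open subscheme of $V$ (nonempty because $U_{\mathrm{reg}}$ is a big open of $U$ and $P$ is a proper closed subset), so $V_{U'}$ is itself integral and in particular connected.

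The main content is (c). By \autoref{thm.InversionAbhyankar}, at every geometric point $\bar{x} \to U$ with $x \in P \cap U_{\mathrm{reg}}$ there is an étale neighborhood $W \to U$ on which $V_W \cong W[T]/(T^e - w)$ with $\Div w = P_W$. Restricting to $W \smallsetminus P_W$ makes $w$ a unit, and the resulting cover $W[T]/(T^e - w) \to W \smallsetminus P_W$ is the standard Kummer $\bm{\mu}_e$-torsor, with $\bm{\mu}_e$ acting by $T \mapsto \zeta T$. These local torsor structures are canonical—the $\bm{\mu}_e$-action matches the intrinsic identification of the cyclic group $\Gal(V/U)$ with $\bm{\mu}_e$ coming from the tame inertia action on the cotangent line at the generic point of $P$—so they agree on overlaps and glue to a $\bm{\mu}_e$-torsor structure on $V_{U'} \to U'$, provided the resulting étale charts actually cover $U'$.

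The hypothesis that $P$ contains all closed points of $U$ is precisely what forces this covering. For any $y \in U'$, the closure $\overline{\{y\}}^U$ contains a closed point of $U$, which by hypothesis lies on $P$. Since $P$ is a prime divisor and $U_{\mathrm{sing}}$ has codimension at least two in $U$, the locus $P \cap U_{\mathrm{reg}}$ is dense in $P$, and one may arrange a closed specialization $x \in \overline{\{y\}}^U \cap P \cap U_{\mathrm{reg}}$. Since étale morphisms are open and $y$ lies in every open neighborhood of $x$ (because $y$ specializes to $x$), the image of the Abhyankar étale neighborhood $W \to U$ at $x$ contains $y$. The main technical subtlety I expect is this set-theoretic extraction of a regular closed specialization of $y$; once this is handled, the local-to-global gluing into a $\bm{\mu}_e$-torsor on $V_{U'}$ is formal.
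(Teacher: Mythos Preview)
Your proposal is correct and follows essentially the same approach as the paper: apply \autoref{thm.InversionAbhyankar} at points of $P\cap U_{\mathrm{reg}}$ to get local Kummer descriptions, restrict away from $P$ to get local $\bm{\mu}_e$-torsors, and glue. The paper handles the covering step slightly more directly by observing that every closed point of $U_{\mathrm{reg}}$ is already a closed point of $U$ (hence lies in $P\cap U_{\mathrm{reg}}$), so the Abhyankar charts immediately cover $U_{\mathrm{reg}}$; this avoids your density-in-$P$ maneuver, which as stated does not quite produce the required regular specialization---take the closed specialization inside $U_{\mathrm{reg}}$ first and then invoke the hypothesis.
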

\begin{proof}
Since $P$ contains all closed points of $U$, $P \cap U_{\mathrm{reg}}$ contains all the closed points of 
$U_{\mathrm{reg}}$.\footnote{Indeed, a closed point of $U_{\mathrm{reg}}$ is necessarily a closed point of $U$ by \cite[\href{https://stacks.math.columbia.edu/tag/07P8}{Tag 07P8}]{stacks-project} (or directly by \cite[\href{https://stacks.math.columbia.edu/tag/0541}{Tag 0541}]{stacks-project} and the fact that the Zariski closure of a point is irreducible).} Applying \autoref{thm.InversionAbhyankar}, we find an \'etale covering $\{W_i \to U_\mathrm{reg}\}_{i \in I}$ such that $f_{W_i}\: V_{W_i} \to W_i$ is a Kummer cover of degree $e$. In particular, the pullback of $f_{W_i}$ to $U'$ is a $\bm{\mu}_e$-torsor. Since these torsors are all induced by a morphism $f\: V \to U$ they clearly glue. Finally, we observe that connectedness follows from normality.
\end{proof}

\begin{remark} \label{rem.Z/etorsorsCyclicCovers}
Let $Y \to X$ be a connected $\bm{\mu}_e$-torsor (in the flat topology) with $X$, $Y$ normal integral schemes.
We may use Kummer theory to describe $Y \to X$ in terms of cyclic covers; see \cite[III, \S4]{MilneEtaleCohomology}. Indeed, we have that there exists an invertible sheaf $\sL$ on $X$ of index $e$ such that $Y \to X$ is isomorphic to a cyclic cover $\Spec_X \bigoplus_{i=0}^{e-1} \sL^i \to X$. In particular, if the Picard group of $X$ is finitely generated, $e$ cannot be arbitrarily large.
\end{remark}

\subsection{Proof of \autoref{thm.FundamentalTheorem}}
With the above in place, we are ready to proceed with the proof of the main theorem of this section. 
\begin{proof}[Proof of \autoref{thm.FundamentalTheorem}]
First of all, we recall that $P_{\bar{x}}$ is a prime divisor on $\sO_{X,\bar{x}}^{\mathrm{sh}}$ so that our local hypothesis (2) makes sense; see \autoref{rem.GlobalToLocalPrimality}.

\begin{proof}[Proof of property (A)] 
Let $f\: V \to U$ be a connected cover in $\mathsf{Rev}^P(U)$. Since $f$ is finite and surjective, proving (A) amounts to showing that there is exactly one DVR of $K(V)$ lying over $\sO_{U,  P}\subset K(U)$; equivalently, that there is exactly one point of $V$ lying over the generic point of $P$. Indeed, $(f^{-1}P)_{\mathrm{red}}$ is the reduced closed subscheme of $V$ whose irreducible components are given by the prime divisors associated to the (necessarily) codimension-$1$ points of $V$ lying over the generic point of $P$. Since we assume $f^{-1} P$ to be connected by hypothesis (1), so is $(f^{-1}P)_{\mathrm{red}}$. Hence, it suffices to prove that the prime divisors of $V$ lying over $P$ do not intersect. Suppose for the sake of contradiction that two of such divisors, say $Q$ and $Q'$, meet at a closed point $y \in V$ and let $x= f(y) \in Y$. Let $W \to U_{\bar{x}}$ be the connected component of  $f_{\bar{x}}\:V_{\bar{x}} \to U_{\bar{x}}$ that contains $\bar{y} \to V_{\bar{x}}$; see \cite[Lemma 3.31]{CarvajalRojasStaeblerTameFundamentalGroupsPurePairs}. Then, $W \to U_{\bar{x}}$ is a connected object in  $\mathsf{Rev}^{P_{\bar{x}}}(U_{\bar{x}})$ that violates $P_{\bar{x}}$-irreducibility and so our hypothesis (2); a contradiction. Indeed, the pullback of $Q$ and $Q'$ to $W$ are two prime divisors on $W$ lying over $P_{\bar{x}}$.
\end{proof}

\begin{proof}[Proof of property (B)] Property (B) is a formal consequence of (A); see \autoref{rem.PiiredImpliesInDecantation}. 
\end{proof}

\begin{proof}[Proof of property (C)]
The existence of the stated quasi-\'etale Galois cover in $\mathsf{Rev}^P(U)$ follows from applying \autoref{cor.LocalToGlogalQuasiEtaleCovers}. Indeed, consider a tower
\[
U = U_0 \xleftarrow{\gamma_0} U_1 \xleftarrow{\gamma_1} U_2 \xleftarrow{\gamma_2} \cdots
\]
of quasi-\'etale Galois covers in $\mathsf{Rev}^P(U)$.
Since $\mathsf{Rev}^{P_{\bar{x}}}(U_{\bar{x}})$ inertially bounded for all geometric points $\bar{x} \to U$, the maps $\gamma_{i,\bar{x}}$ are trivial for sufficiently large $i$. By applying \autoref{cor.LocalToGlogalQuasiEtaleCovers}, we conclude that the maps $\gamma_i$ are \'etale for sufficiently large $i$. By the abstract nonsense regarding Galois categories (keeping in mind \autoref{rem.FunRem}), we obtain the desired statement.
\end{proof}

\begin{proof}[Proof of the final statement]
Consider a tower
\[
(\Bar{U}, \Bar{P}) = (\Bar{U}_0, \Bar{Q}_0) \xleftarrow{\beta_0} (\bar{U}_1, \Bar{Q}_1) \xleftarrow{\beta_1} (\Bar{U}_2, \Bar{Q}_2) \xleftarrow{\beta_2} \cdots
\]
of \'etale Galois covers in $\mathsf{Rev}^{\Bar{P}}(\Bar{U})$. When restricted to $\Bar{P}$, we obtain a tower of \'etale covers
\[
\Bar{P} = \Bar{Q}_0 \xleftarrow{\alpha_0} \bar{Q}_1 \xleftarrow{\alpha_1} \Bar{Q}_2 \xleftarrow{\alpha_2} \cdots
\]
Here, we use that the maps $\beta_i$ are \'etale to say that $\Bar{Q}_{i+1} = (\beta_i^{-1}\bar{Q}_i)_{\mathrm{red}}=\beta_i^{-1}\bar{Q}_i$. In particular, we have that all the prime divisors $\Bar{Q}_i$ are normal. In this manner, the $\alpha_i$ must eventually be all isomorphisms as $\pi_1^{\mathrm{\acute{e}t}}(\Bar{P})$ is assumed finite. Using this, \autoref{rem.FunRem}, and property (B), we obtain an object $(\tilde{U}, \tilde{P})$ that satisfies property (i). We obtain that  $(\tilde{U}, \tilde{P})$ satisfies property (ii) and (iii) as a direct application of \autoref{thm.InversionAbhyankar}.
\end{proof}
This completes the proof of \autoref{thm.FundamentalTheorem}.
\end{proof}

\section{Finiteness and tameness of local \'etale fundamental groups} \label{sec.finiteness}

We now come to the finiteness of $\pi_1^{\mathrm{\acute{e}t}}(S^{\circ})^{(p)}$ and eventually to the tameness and finiteness of $\pi_1^{\mathrm{\acute{e}t}}(S^{\circ})$. \emph{Throughout this section, we work in \autoref{setup} assuming $d = 3$ and $\kay$ to be algebraically closed of characteristic $p>5$}.

First, use \autoref{thm.ExistencePFTRblowUP} to ensure the existence of a \emph{generalized} PLT blowup $\pi\: (T,\Delta_T) \to S$ extracting a Koll\'ar component $(E, \Delta_E)$,\footnote{To be precise, we construct the PLT blowup on a Zariski open neighborhood of $x\in X$ and then we pull it back to $S$. However, as pointed out to us by an anonymous referee, this may now be done directly by using the recent work \cite{BhattMaPatakfalviSchwedeTuckerWaldronWitaszekG+RegMMPthrefoldMixChar}.} which is a log del Pezzo pair. Recall that $(T, E+\Delta_T)$ is a PLT pair whereas $(T,E)$ is a purely $F$-regular one. Define $\Delta_S \coloneqq \pi_* \Delta_T$, so $(S,\Delta_S)$ is KLT (\cf \autoref{rem.logfano}). We denote by $Z' \subset T$ the strict transform of $Z$ along $\pi$,\footnote{That is, $Z'$ is the Zariski-closure of $Z \cap (S \smallsetminus \{x\})$ in $T$. Note that, by construction, $S \smallsetminus \{x\}$ is an open subscheme of both $S$ and $T$.} and we write $U \coloneqq T \smallsetminus Z'$. By abuse of notation, we denote by $E$ the restriction of $E$ to $U$. Further, we observe that $S^{\circ} = U \smallsetminus E$. The following commutative diagram depicts the situation
\begin{equation} \label{eqn.setup}
\xymatrix{
S^{\circ} \ar[r]^-{\iota} \ar[rd]_-{i} & U \ar[d]^-{\varpi} \ar[r]^-{o} & T \ar[ld]^-{\pi} \\
& S
}
\end{equation}
where $i$, $\iota$, and $o$ are the open immersions. Next, we consider the following observation of Grothendieck--Murre whose proof we revisit for the reader's convenience and for the sake of completeness (as we work in a slightly different setup).
 \begin{proposition}[{\cite[Corollary 9.8]{GrothendieckMurreTameFundamentalGroup}}] \label{cla.GrotehndieckMurre}
The restriction functor defined by $\iota$
\[
\mathsf{Rev}^E(U) \to \mathsf{FEt}(S^{\circ})
\]
induces a surjective homomorphism of topological groups
\[
\lambda \:\pi_1^{\mathrm{\acute{e}t}}(S^\circ) \to \pi_1^{\mathrm{t},E}(U)
\]
that induces an isomorphism on prime-to-$p$ parts:
\[
\mu = \lambda^{(p)} \: \pi_1^{\mathrm{\acute{e}t}}(S^\circ)^{(p)} \xrightarrow{\cong} \pi_1^{\mathrm{t},E}(U)^{(p)}.
\]
Thus, we have surjective homomorphisms of topological groups
\[
\pi_1^{\mathrm{\acute{e}t}}(S^\circ) \twoheadrightarrow \pi_1^{\mathrm{t},E}(U) \twoheadrightarrow \pi_1^{\mathrm{\acute{e}t}}(S^\circ)^{(p)}
\]
\end{proposition}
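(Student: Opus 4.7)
The plan is to obtain $\lambda$ from a fully faithful restriction functor between Galois categories, and then to upgrade it to an isomorphism on prime-to-$p$ parts by checking essential surjectivity there. First I would verify that $\iota^\ast \colon \mathsf{Rev}^E(U) \to \mathsf{FEt}(S^{\circ})$ is well-defined: any connected $(V, Q) \in \mathsf{Rev}^E(U)$ is a normal finite cover of $U$ that is \'etale away from $E$, so its restriction to $S^{\circ} = U \smallsetminus E$ is finite \'etale; moreover, since $V$ is integral (being normal and connected), its restriction to the dense open $S^{\circ}$ remains connected. Faithfulness of $\iota^\ast$ is immediate from the density of $S^{\circ}$ in $V$. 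For fullness, a morphism $h \colon V|_{S^{\circ}} \to V'|_{S^{\circ}}$ over $S^{\circ}$ induces an inclusion of function fields $K(V') \hookrightarrow K(V)$, and since both $V$ and $V'$ are the normalizations of $U$ inside their own function fields, this inclusion yields a canonical extension to a morphism $V \to V'$ over $U$ restricting to $h$. By the standard Galois-categorical dictionary, a fully faithful functor between Galois categories that preserves connectedness translates precisely into a surjection on fundamental groups, giving $\lambda \colon \pi_1^{\mathrm{loc}}(S; Z) \twoheadrightarrow \pi_1^{\mathrm{t}, E}(U)$.

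Next I would show that every finite \'etale cover $W \to S^{\circ}$ of prime-to-$p$ generic degree lies in the essential image of $\iota^\ast$. Take $V$ to be the normalization of $U$ in $K(W)$; this produces a finite normal cover of $U$ that is \'etale over $S^{\circ}$ and whose ramification index at the generic point of the prime divisor $E$ divides $\deg W$, hence is prime to $p$. Such a ramification is automatically tame at the DVR $\sO_{U, E}$, so $V$ together with the reduced preimage of $E$ defines an object of $\mathsf{Rev}^E(U)$ that restricts to $W$. Consequently $\iota^\ast$ restricts to an equivalence between the prime-to-$p$ Galois subcategories on both sides, which translates to $\mu = \lambda^{(p)}$ being an isomorphism of profinite groups.

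The final chain of surjections in the statement is then formal: composing $\lambda$ with the canonical projection $\pi_1^{\mathrm{t}, E}(U) \twoheadrightarrow \pi_1^{\mathrm{t}, E}(U)^{(p)}$ and then with $\mu^{-1}$ recovers the canonical surjection $\pi_1^{\mathrm{loc}}(S; Z) \twoheadrightarrow \pi_1^{\mathrm{loc}}(S; Z)^{(p)}$. The main subtlety is simply to confirm that $E$ yields a well-defined codimension-one generic point in $U$, so that $\sO_{U, E}$ is a DVR and the tame/wild dichotomy is governed by whether $p$ divides the ramification index; this is automatic because $U$ is normal and $E$ is a prime divisor. All in all, the proposition is a direct packaging of the Grothendieck--Murre purity principle for tame fundamental groups \cite{GrothendieckMurreTameFundamentalGroup} in our setting.
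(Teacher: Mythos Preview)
Your approach is essentially the same as the paper's: both argue surjectivity of $\lambda$ via the Galois-categorical criterion (you via full faithfulness, the paper via ``connected stays connected,'' which are equivalent characterizations), and both obtain the prime-to-$p$ isomorphism by normalizing $U$ in the function field of a given cover of $S^{\circ}$ and checking tameness over $E$.

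There is one imprecision worth flagging. You claim that for every finite \'etale cover $W \to S^{\circ}$ of prime-to-$p$ degree, the ramification index of the normalization $V \to U$ at $E$ \emph{divides} $\deg W$. This is not true in general for non-Galois covers: for instance, a degree~$3$ cover with Galois closure $S_3$ can have a prime with ramification index~$2$ over $E$, which does not divide~$3$. Moreover, ``prime-to-$p$ degree'' is not the correct class for the prime-to-$p$ quotient anyway; what matters is that the monodromy (Galois closure) has prime-to-$p$ order. The fix is simply to restrict, as the paper does, to \emph{Galois} covers $W \to S^{\circ}$ of prime-to-$p$ order: for these the ramification index genuinely divides $[K(W):K]$ (since $efg = n$ with all $e_i$ equal), hence is prime to~$p$, and tameness follows. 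Since Galois covers of prime-to-$p$ order determine $\pi_1^{\mathrm{loc}}(S;Z)^{(p)}$, this suffices for the injectivity of $\mu$.
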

\begin{proof}
By the abstract nonsense regarding Galois categories \cite[\S5.2.1]{MurreLecturesFundamentalGroups}, to show $\lambda$ is surjective, we must prove that any connected (Galois) cover $V \to U$ in $\mathsf{Rev}^E(U)$ remains connected when restricted to $S^{\circ}$. However, this follows from normality as connectedness and irreducibility are equivalent notions under normality and because irreducibility is preserved under restriction to open subschemes.\footnote{In general, any non-empty open subset of an irreducible topological space is (dense and) irreducible \cite[I, \S1, Excercise 1.6]{Hartshorne}.}

To see why $\mu = \lambda^{(p)}$ is injective, we proceed as follows. We must verify that any Galois cover $Y \to S^{\circ}$ in $\mathsf{FEt}(S^{\circ})$ of order prime-to-$p$ is obtained as the restriction of some Galois cover in $\mathsf{Rev}^E(U)$ (whose order is necessarily prime-to-$p$). To this end, we consider the Galois field extension $K \subset K(Y)$. Since $\pi$ is birational, we have $K(U) = K$. Thus, we may let $V \to U$ be the normalization of $U$ in $K(Y)$. Of course, the pullback of $V \to U$ along $\iota$ is $Y \to S^{\circ}$, and moreover $K(Y)/K$ is tame with respect to $\sO_{U,E}$ as $p$ does not divide $\big[K(Y):K\big]$.
\end{proof}

\begin{remark} \label{rem.GettingThingsClear}
In the proof of \autoref{cla.GrotehndieckMurre}, we mentioned the relationship between the Galois categories $\mathsf{Rev}^E(U)$, $\mathsf{FEt}(S^{\circ})$. We would like expand on this as it will facilitate and clarify the upcoming discussion. Let $K_1/K$ be a finite Galois extension. We may then take the normalization of the diagram \autoref{eqn.setup} so that we have:
\[
\xymatrix{ 
& & S_1^{\circ} \ar@{.>}[dll] \ar[r]^-{\iota_1} \ar[rd]_-{i_1} & U_1 \ar@{.>}[dll] \ar[d]^-{\varpi_1} \ar[r]^-{o_1} & T_1 \ar@{.>}[dll] \ar[ld]^-{\pi_1} &  \\
S^{\circ}  \ar[r]^-{\iota} \ar[rd]_-{i} & U  \ar[d]^-{\varpi} \ar[r]^-{o} & T \ar[ld]^-{\pi} &  S_1 \ar@{.>}[dll] \\
& S 
}
\]
where the dotted, unlabeled arrows represent the normalization morphisms. Note that both top (slanted) rectangles are cartesian as well as the one most to the left, for $i$, $\iota$, and $o$ are open immersions.  Notice that $S_1 = \Spec R_1$ where $R_1$---the integral closure of $R$ in $K_1$---is a strictly local ring as $R$ is strictly local and $S_1$ is connected. We may say that the content of \autoref{cla.GrotehndieckMurre} is that $S_1^{\circ}/S^{\circ}$ is \'etale if $U_1/U$ is at worst tamely ramified over $E$ and the converse (trivially) holds provided that $[K_1:K]$ is prime-to-$p$. Consider next the commutative square \autoref{keydiagram} making the lateral right face of the diagram. This is the diagram we shall focus on:
\begin{align}
\label{keydiagram}
\xymatrix{
T \ar[d]_-{\pi} & T_1 \ar[d]^-{\pi_1}  \ar[l]_-{g} \\
S  & S_1 \ar[l]_-{f}
}
\end{align}
Note that $(S_1, f^* \Delta_S\eqqcolon \Delta_{S_1})$ is a KLT singularity (\cite[2.43]{KollarSingulaitieofMMP}). Similarly, we define $\Delta_{T_1} \coloneqq g^* \Delta_T$. Since $g$ and $f$ are finite (and so proper), we have that $\pi_1$ is proper for $\pi$ is proper. Thus, $\pi_1$ is a proper birational morphism with exceptional locus equal to $E_1 \coloneqq g^{-1} E$ by the commutativity of the diagram. We conclude that $E_1$ is then connected by Zariski's main theorem; see \cite[III, Corollary 11.4]{Hartshorne}.
\end{remark}

With the above in place, our next goal is to prove that $\pi_1^{\mathrm{t},E}(U)$ is finite, so that $\pi_1^{\mathrm{\acute{e}t}}(S^{\circ})^{(p)}$ is finite as well in the rational case.

\begin{theorem}[{\cf \cite[Theorem 3.4]{XuZhangNonvanishing}, \cite[Proposition 5.2]{HaconWitaszekMMPLowCharacteristic}}] \label{theo.mainresultbody}
The groups $\pi_1^{\mathrm{t},E}(U)$ and so $\pi_1^{\mathrm{\acute{e}t}}(S^{\circ})^{(p)}$ are finite.
\end{theorem}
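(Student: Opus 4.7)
The plan is to apply \autoref{thm.FundamentalTheorem} iteratively to the pair $(U,E)$ of \eqref{eqn.setup}, using the isomorphism $\pi_1^{\mathrm{loc}}(S;Z)^{(p)} \cong \pi_1^{\mathrm{t},E}(U)^{(p)}$ of \autoref{cla.GrotehndieckMurre} to reduce a global finiteness statement to controlling a tower of totally ramified Kummer-type covers whose ramification indices are bounded by the prime-to-$p$ torsion of a local class group.

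First I would verify the two hypotheses of \autoref{thm.FundamentalTheorem}. Hypothesis (2)---that $\mathsf{Rev}^{E_{\bar x}}(U_{\bar x})$ satisfies (a) and (b) of \cite[Theorem~3.20]{CarvajalRojasStaeblerTameFundamentalGroupsPurePairs}---follows from $(T,E)$ being purely $F$-regular (by \autoref{thm.ExistencePFTRblowUP}), a property preserved under strict localization. For hypothesis (1), given a connected cover $V \to U$ in $\mathsf{Rev}^E(U)$, I would extend it to the normalization $T_1 \to T$ as in \autoref{rem.GettingThingsClear}; then $E_1 = \pi_1^{-1}(x)$ is connected by Zariski's connectedness theorem applied to the proper birational map $\pi_1 \colon T_1 \to S_1$, and passing from $T_1$ to $U_1$ removes only a codimension $\ge 2$ closed subset, so $E_1 \cap U_1$ remains connected.

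Next I would apply property (C) of \autoref{thm.FundamentalTheorem} to obtain a quasi-\'etale Galois cover $(\bar U,\bar E) \to (U,E)$ such that every quasi-\'etale Galois cover in $\mathsf{Rev}^{\bar E}(\bar U)$ is already \'etale. Since quasi-\'etale covers preserve pure $F$-regularity and pull weak log del Pezzo pairs back to weak log del Pezzo pairs (Riemann--Hurwitz in codimension one), $(\bar E,\Delta_{\bar E})$ is still weak log del Pezzo, so \autoref{thm.FundamentalGrouoDelPezzo} yields the finiteness of $\pi_1^{\mathrm{\acute et}}(\bar E \cap \bar U)$. Feeding this into the ``furthermore'' part of \autoref{thm.FundamentalTheorem} produces $(\tilde U,\tilde E)$ such that every Galois cover in $\mathsf{Rev}^{\tilde E}(\tilde U)$ is totally ramified of some index $e$ (necessarily prime to $p$) and is \'etale-locally a Kummer cover on $\tilde U_{\mathrm{reg}}$. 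By \autoref{cor.UseAbhyankarLemma}, after restricting to $U' \coloneqq \tilde U_{\mathrm{reg}} \smallsetminus \tilde E$, any such cover becomes a connected $\bm{\mu}_e$-torsor.

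By Kummer theory as recalled in \autoref{rem.Z/etorsorsCyclicCovers}, connected $\bm{\mu}_e$-torsors on $U'$ are classified by $e$-torsion line bundles on $U'$, which embed in a local Picard / divisor class group attached to the singularity underlying $\tilde U$. Hence the finiteness of $\pi_1^{\mathrm{t},\tilde E}(\tilde U)$---and then of $\pi_1^{\mathrm{t},E}(U)$ via the short exact sequence in \autoref{rem.FunRem}---reduces to showing that the prime-to-$p$ torsion of this local class group is finite. This is the main obstacle; I would discharge it through the local Picard scheme machinery of \autoref{sec.LocalPicardSchemes} (appealing to Boutot's theorem, which requires rationality), or, to bypass the rationality hypothesis entirely, through Koll\'ar's arguments in \autoref{sec.Appendix}. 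The finiteness of $\pi_1^{\mathrm{loc}}(S;Z)^{(p)}$ then follows from \autoref{cla.GrotehndieckMurre}.
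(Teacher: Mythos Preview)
Your proposal is correct and follows essentially the same route as the paper: verify hypotheses (1) and (2) of \autoref{thm.FundamentalTheorem} via Zariski connectedness and pure $F$-regularity of $(T,E)$, pass to $(\tilde U,\tilde E)$ where every cover is totally ramified and \'etale-locally Kummer, then bound the ramification indices by the prime-to-$p$ torsion of $\Cl \tilde S$ using \autoref{cor.UseAbhyankarLemma} and \autoref{dlt.loc.pic.prop}. Two small points you gloss over that the paper makes explicit: to invoke \autoref{cor.UseAbhyankarLemma} you must check that $\tilde E$ passes through every closed point of $\tilde U$ (which holds because $\tilde E$ is the closed fiber of the proper map $\tilde T \to \tilde S$ over a local base), and you should identify $\tilde U_{\mathrm{reg}}\smallsetminus \tilde E$ precisely with a regular big open of $\tilde S$ so that its Picard group equals $\Cl \tilde S$.
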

\begin{proof} 
We apply \autoref{thm.FundamentalTheorem} to the case $X \leftrightarrow T$, $Z \leftrightarrow Z'$, $U \leftrightarrow U$, and $P \leftrightarrow E$.
First, we verify the hypothesis (1) and (2) of \autoref{thm.FundamentalTheorem} to hold in this context.

\begin{claim} \label{cla.CheckingHypothesis}
$\mathsf{Rev}^E(U)$ satisfies hypothesis (1) and (2) in  \autoref{thm.FundamentalTheorem}. Furthermore, for any connected object $(U_1,E_1)$ in $\mathsf{Rev}^E(U)$, we have:
\begin{itemize}
    \item $E_1$ is as normal prime divisor on $T_1$,
    \item $(T_1,E_1)$ is a purely $F$-regular pair and so $\pi_1 \: (T_1, \Delta_{T_1}) \to S_1$ is a generalized PLT blowup with respective Koll\'ar component $(E_1, \Delta_{E_1}$),
    \item $\pi_1^{\mathrm{\acute{e}t}}(E_1 \cap U_1)$ is finite.
\end{itemize}
\end{claim}
\begin{proof}[Proof of claim]
Why $\mathsf{Rev}^E(U)$ satisfies hypothesis (1) was explained in the last paragraph of \autoref{rem.GettingThingsClear}.

We proceed to verify (2). Recall that $(T,E)$ has purely $F$-regular singularities. In particular, for every geometric point $\bar{x} \to U$, the local pair $(U_{\bar{x}}, E_{\bar{x}})$ is a purely $F$-regular local pair in the sense of \cite[Definition 2.3]{CarvajalRojasStaeblerTameFundamentalGroupsPurePairs}. Thus, we may apply \cite[Theorem 4.12]{CarvajalRojasStaeblerTameFundamentalGroupsPurePairs}, which establishes that $\mathsf{Rev}^{E_{\bar{x}}}(U_{\bar{x}})$ is $E_{\bar{x}}$-irreducible, inertially bounded, and inertially tame. 

Let us explain next why $(U_1,E_1)$ is purely $F$-regular, so that $E_1$ is strongly $F$-regular and so normal. Further, $E_1$ is a normal prime divisor as it is connected. The pure $F$-regularity of $(U_1,E_1)$ is a local condition that can be checked at every geometric point. Moreover, it can be checked at every geometric point $\bar{x} \to U$. Since $(U_{\bar{x}}, E_{\bar{x}})$ is a purely $F$-regular, we may apply \cite[Theorem 4.7]{CarvajalRojasStaeblerTameFundamentalGroupsPurePairs} to conclude that every connected component of $(E_{1,\bar{x}},E_{1,\bar{x}})$ is a purely $F$-regular local pair; as desired.

Since $g$ is a tame cover with respect to $E$, $(T_1, E_1 + \Delta_{T_1})$ is PLT by \cite[2.43]{KollarSingulaitieofMMP} and in fact $g^\ast(K_T + E + \Delta_T) = K_{T_1} + E_1 + \Delta_{T_1}$. Further, we see that $-(K_{T_1}+E_1 + \Delta_{T_1})$ is ample over $S$ and the contraction of $E_1$ is precisely $\pi_1$ and $\pi_{1,*}\Delta_{T_1} = \Delta_{S_1}$. This concludes the proof of the second bullet point.

For the finiteness of $\pi_1^{\mathrm{\acute{e}t}}(E_1 \cap U_1)$, we use \autoref{thm.FundamentalGrouoDelPezzo} noting that $E_1 \cap U_1$ is a big open of $E_1$; see footnote~\ref{footnote} on page~\pageref{footnote}.
\end{proof}

By \autoref{cla.CheckingHypothesis}, we may apply \autoref{thm.FundamentalTheorem} to obtain a Galois quasi-\'etale cover $f\: \tilde{U} \to U$ satisfying properties (i) and (ii) in \autoref{thm.FundamentalTheorem}. In particular, it suffices to explain why $\pi_1^{\mathrm{t},\tilde{E}}(\tilde{U})$ is finite. To this end, we use \autoref{cor.UseAbhyankarLemma} and \autoref{dlt.loc.pic.prop}. First, we observe that $\tilde{E}$ goes through every closed point of $\tilde{U}$. Indeed, using the construction and notation of \autoref{rem.GettingThingsClear}, we observe that $\tilde{\pi} \: \tilde{T} \to \tilde{S}$ is a proper morphism over the spectrum of a local ring and $\tilde{E}$ is the fiber over \emph{the} closed point and so it must contain every closed point of $\tilde{T}$. Thus, by \autoref{cor.UseAbhyankarLemma} \cf \autoref{rem.Z/etorsorsCyclicCovers}, the degree of a Galois cover in $\mathsf{Rev}^{\tilde{E}}(\tilde{U})$ is bounded by the degree of a connected prime-to-$p$ cyclic cover over 
\[
\tilde{U}_{\mathrm{reg}} \smallsetminus \tilde{E} = \tilde{S}^{\circ} \cap \tilde{U}_{\mathrm{reg}} = \tilde{S}^{\circ}_{\mathrm{reg}} = \tilde{S}^{\circ} \cap \tilde{S}_{\mathrm{reg}} \eqqcolon O,
\]
and so by the index of a prime-to-$p$ torsion element of $\Pic O$. However, $O$ is a regular big open of $\tilde{S}$ and so $\Pic O = \Cl O = \Cl \tilde{S}$, which has finitely generated by prime-to-$p$ part by  \autoref{dlt.loc.pic.prop}. Thus, the degree of a Galois object in $\mathsf{Rev}^{\tilde{E}}(\tilde{U})$ cannot be arbitrarily large. This implies the existence of a universal Galois cover in $\mathsf{Rev}^{\tilde{E}}(\tilde{U})$ and further the finiteness of $\pi_1^{\mathrm{t},\tilde{E}}(\tilde{U})$.
\end{proof}

The following is the main result of this paper, which we shall prove as a formal Galois-theoretic consequence of \autoref{cor.PGroupTorsors} and \autoref{theo.mainresultbody}.

\begin{corollary} \label{thm.TamenessFundGroup}
The canonical surjection $\pi_1^{\mathrm{\acute{e}t}}(S^{\circ}) \twoheadrightarrow \pi_1^{\mathrm{t},E}(U) \twoheadrightarrow \pi_1^{\mathrm{\acute{e}t}}(S^{\circ})^{(p)}$ is an isomorphism. Hence, $\pi_1^{\mathrm{\acute{e}t}}(S^{\circ})$ is tame and finite.
\end{corollary}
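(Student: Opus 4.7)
The plan is to leverage \autoref{theo.mainresultbody} (which gives finiteness of $\pi_1^{\mathrm{loc}}(S;Z)^{(p)}$) together with \autoref{cor.PGroupTorsors} (vanishing of all $p$-group torsors on $S^{\circ}$) to show that \emph{every} finite continuous quotient of $\pi_1^{\mathrm{loc}}(S;Z)$ has order coprime to $p$; this is precisely equivalent, by the very definition of the prime-to-$p$ part, to the claimed isomorphism $\pi_1^{\mathrm{loc}}(S;Z) \xrightarrow{\cong} \pi_1^{\mathrm{loc}}(S;Z)^{(p)}$, and both finiteness and tameness then follow immediately from \autoref{theo.mainresultbody}.

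I would argue by contradiction. Assuming a continuous surjection $\phi\:\pi_1^{\mathrm{loc}}(S;Z) \twoheadrightarrow Q$ onto a finite group $Q$ with $p \mid |Q|$, pick a Sylow $p$-subgroup $P \leq Q$ and set $H \coloneqq \phi^{-1}(P)$. Then $H$ is an open subgroup of $\pi_1^{\mathrm{loc}}(S;Z)$ of prime-to-$p$ index admitting a continuous surjection $\phi|_H \: H \twoheadrightarrow P$ onto a non-trivial finite $p$-group. By the Galois correspondence for $\mathsf{FEt}(S^{\circ})$, $H$ is the \'etale fundamental group of a connected finite \'etale cover $\hat{S}^{\circ} \to S^{\circ}$, and $\phi|_H$ witnesses a non-trivial connected $P$-torsor over $\hat{S}^{\circ}$.

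The next step---which I expect to be the main technical point---is to argue that \autoref{cor.PGroupTorsors} applies verbatim to $\hat{S}^{\circ}$ in place of $S^{\circ}$, thereby forcing the $P$-torsor above to be trivial and producing the contradiction. For this, let $\hat{S} \to S$ be the normalization of $S$ in the function field of $\hat{S}^{\circ}$: since $R$ is strictly Henselian and $\hat{S}^{\circ}$ is connected, $\hat{S} = \Spec \hat{R}$ with $\hat{R}$ a strictly Henselian normal local domain, and $\hat{S} \to S$ is a connected finite quasi-\'etale cover. Then $(\hat{S},\hat{\Delta})$ is again a $3$-dimensional KLT singularity over the algebraically closed field $\kay$ of characteristic $p>5$ by \cite[2.43]{KollarSingulaitieofMMP}, and $\hat{S}^{\circ}$ is obtained from $\hat{S}$ by removing the preimage of $Z$, which has codimension at least $2$. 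Thus $\hat{S}^{\circ}$ falls under \autoref{setup}, and the entire chain of arguments leading to \autoref{cor.PGroupTorsors} (through \autoref{thm.Tameness}) applies to it unchanged.

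The real work is therefore in checking this compatibility---that the normalization of $R$ in a finite separable field extension that is \'etale in codimension one is again strictly local and inherits the KLT hypothesis in a manner fitting the setup under which \autoref{cor.PGroupTorsors} was proved. Once this is in place, the surjection $\phi|_H \: H \twoheadrightarrow P$ onto $P \neq 1$ contradicts the vanishing $H^1(\hat{S}^{\circ}_{\mathrm{fl}}, P) = *$, so $p \nmid |Q|$ for every finite quotient $Q$ of $\pi_1^{\mathrm{loc}}(S;Z)$. Hence $\pi_1^{\mathrm{loc}}(S;Z) = \pi_1^{\mathrm{loc}}(S;Z)^{(p)}$, which is finite by \autoref{theo.mainresultbody}, and the canonical surjection in the statement is an isomorphism as desired.
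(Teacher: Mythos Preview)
Your proposal is correct and follows essentially the same approach as the paper's proof: both argue by contradiction, pass to the intermediate cover corresponding to a Sylow $p$-subgroup of a hypothetical finite quotient of order divisible by $p$, observe that this intermediate cover is again a KLT threefold singularity fitting \autoref{setup} (via \cite[2.43]{KollarSingulaitieofMMP}), and then invoke \autoref{cor.PGroupTorsors} on that cover to derive a contradiction. Your $\hat{S}$ is exactly the paper's $S_0 = S_1/H$, and your $P$-torsor is the paper's $S_1 \to S_0$.
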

\begin{proof}
Let $K_1/K$ be a Galois extension whose Galois group $G = \Gal(K_1/K)$ has order divisible by $p$. Suppose, for the sake of contradiction, that $K_1/K$ is an object in $\mathsf{FEt}(S^{\circ})$. Formally, this is tantamount to the existence of a homomorphism $\pi_1^{\mathrm{\acute{e}t}}(S^{\circ}) \twoheadrightarrow G$. Let $S_1 \to S$ be the corresponding quasi-\'etale cover, i.e. $S_1 = \Spec R_1$ where $R_1$ is the integral closure of $R$ in $K_1$ which is a strictly local ring. Let $H$ be the $p$-Sylow group of $G$, which is nontrivial by assumption. Let $S_0 \coloneqq S_1/H = \Spec R_1^H$ be the corresponding quotient ($R_1^H$ is also a strictly local ring). Consider the induced factorization $S_1 \to S_0 \to S$ of $S_1 \to S$ noting that $S_0 \to S$ is \'etale over $S^{\circ}$ (and so quasi-\'etale) and $S_1 \to S_0$ is an $H$-torsor over the inverse image of $S^{\circ}$ along $S_0 \to S$, say $S_0^{\circ}$. In particular, $S_0$ is a KLT singularity by \cite[Corollary 2.43]{KollarSingulaitieofMMP} just as $S$ is. Then, applying \autoref{cor.PGroupTorsors} to $S_0$ yields the sought contradiction. Indeed, the pullback of $S_1 \to S_0$ to $S_0^{\circ}$ defines a nontrivial element of $H^1(S_{0,\mathrm{fl}}^{\circ}, H)$---which is a singleton according to \autoref{cor.PGroupTorsors}. 
\end{proof}

\begin{remark}
Note that \autoref{thm.TamenessFundGroup} and so \autoref{thm.Tameness} do not hold in characteristics $p=2,3,5$ by examples in \cite{YasudaDiscrepanciesPcyclicQuotientVarieties,TotaroFailureKVforFanos}. 
\end{remark}

We finalize with the following---by now standard---application \cite{GrebKebekusPeternellEtaleFundamental,BhattCarvajalRojasGrafSchwedeTucker,StibitzFundamentalGroups}. Roughly speaking, it establishes that threefolds with at worst KLT singularities \emph{almost} satisfy Zariski--Nagata--Auslander purity of the branch locus.

\begin{corollary} \label{cor.DirectCorollary}
Work in \autoref{setup} assuming $d=3$ and $\kay$ to be algebraically closed of characteristic $p>5$. Then, every quasi-\'etale cover over $X$ whose degree is a power of $p$ is \'etale. Moreover, there exists a quasi-\'etale Galois cover $\tilde{X} \to X$ with prime-to-$p$ degree such that every quasi-\'etale cover over $\tilde{X}$ is \'etale. Moreover, $\tilde{X} \to X$ can be taken so that it is \'etale over any prescribed big open of $X$.
\end{corollary}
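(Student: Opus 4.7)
The first assertion follows directly from Theorem~A by a pointwise analysis. Let $f\colon Y \to X$ be a quasi-\'etale cover of degree $p^n$. To show $f$ is \'etale it suffices, by the local nature of \'etaleness, to verify that for each closed point $x \in X$ the restriction of $f$ over $\Spec \sO_{X,\bar{x}}^{\mathrm{sh}} \eqqcolon S$ is \'etale (at regular closed points this is automatic by Zariski--Nagata--Auslander purity). The restriction $f_S\colon Y_S \to S$ is a quasi-\'etale cover of degree $p^n$, so it is classified by a continuous action of $\pi_1^{\mathrm{loc}}(S)$ on a set of $p^n$ elements. By Theorem~A, $\pi_1^{\mathrm{loc}}(S)$ is finite and of prime-to-$p$ order, so every orbit has cardinality dividing $\gcd(p^n, |\pi_1^{\mathrm{loc}}(S)|) = 1$. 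Hence the action is trivial, $Y_S \to S$ is the trivial cover, and $f$ is \'etale at $x$.

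For the second assertion, my plan is to construct $\tilde{X}$ as a ``universal'' prime-to-$p$ quasi-\'etale Galois cover. Consider any tower
\[
X = X_0 \xleftarrow{\gamma_0} X_1 \xleftarrow{\gamma_1} X_2 \xleftarrow{\gamma_2} \cdots
\]
of prime-to-$p$ quasi-\'etale Galois covers of $X$. Each $X_i$ is again a KLT threefold singularity by \cite[Corollary 2.43]{KollarSingulaitieofMMP}, so Theorem~A applied to each $X_i$ ensures that all of its local \'etale fundamental groups are finite. Consequently the tower stabilizes locally at every geometric point, and \autoref{cor.LocalToGlogalQuasiEtaleCovers} upgrades this to uniform stabilization: there is $N \in \bN$ such that $\gamma_i$ is \'etale for $i \geq N$. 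A standard cofinality/limit argument on the filtered system of prime-to-$p$ quasi-\'etale Galois covers of $X$ (mimicking the proof of the analogous statement in \cite{BhattCarvajalRojasGrafSchwedeTucker}) then produces a universal such cover $\tilde{X} \to X$, characterized by the property that every prime-to-$p$ quasi-\'etale Galois cover of $\tilde{X}$ is \'etale.

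To upgrade this to \emph{all} quasi-\'etale covers: given any quasi-\'etale Galois cover $Z \to \tilde{X}$ with Galois group $G$, pick a $p$-Sylow subgroup $G_p \leq G$ and factor as $Z \to Z/G_p \to \tilde{X}$. Since $Z/G_p$ is again a KLT threefold singularity (by \cite[Corollary 2.43]{KollarSingulaitieofMMP}), the first assertion applied to the $p$-power degree cover $Z \to Z/G_p$ forces it to be \'etale. Meanwhile, $Z/G_p \to \tilde{X}$ is quasi-\'etale of prime-to-$p$ degree, hence \'etale by construction of $\tilde{X}$. Composing, $Z \to \tilde{X}$ is \'etale, as desired.

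The main obstacle is the construction of $\tilde{X}$ as a genuinely finite cover rather than merely a pro-object: this requires \autoref{cor.LocalToGlogalQuasiEtaleCovers}, which upgrades the pointwise finiteness from Theorem~A to a uniform statement across $X$. The remaining manipulations---Sylow decomposition and applying the first part to intermediate covers---are formal once this is in place.
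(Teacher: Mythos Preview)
Your approach matches the paper's almost exactly: localize at geometric closed points and invoke tameness of the local fundamental group for the first assertion; for the second, produce a universal prime-to-$p$ quasi-\'etale Galois cover $\tilde{X}\to X$ via the tower/stratification argument of \cite{BhattCarvajalRojasGrafSchwedeTucker} (i.e.\ \autoref{cor.LocalToGlogalQuasiEtaleCovers}), and then upgrade to arbitrary quasi-\'etale covers by a Sylow factorization $Z\to Z/G_p\to \tilde X$ combined with the first assertion.

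There is one genuine slip in the Sylow step. You characterize $\tilde X$ by the property that every prime-to-$p$ quasi-\'etale \emph{Galois} cover of $\tilde X$ is \'etale, and then conclude that $Z/G_p\to\tilde X$ is \'etale ``by construction of $\tilde X$''. But $G_p$ need not be normal in $G$, so $Z/G_p\to\tilde X$ is generally \emph{not} Galois, and its Galois closure inside $Z$ is $Z/N$ with $N=\bigcap_g gG_pg^{-1}$, which may well have degree divisible by $p$. So the stated property of $\tilde X$ does not apply. The paper avoids this by asserting from the outset the stronger property coming out of the \cite{BhattCarvajalRojasGrafSchwedeTucker} argument: every prime-to-$p$ quasi-\'etale cover of $\tilde X$ (Galois or not) is \'etale. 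This is what the local-to-global method actually yields, since it forces the prime-to-$p$ part of each local fundamental group of $\tilde X$ to vanish; once you state it this way, your Sylow argument goes through verbatim. Apart from this, your proof is correct and coincides with the paper's.
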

\begin{proof}
The first statement was already noticed in more generality in \autoref{cor.fulltameness}. If $Y \to X$ is a quasi-\'etale cover whose degree is a power of $p$, then the same is true for $Y \times_ X \Spec \sO_{X,\bar{x}}^{\mathrm{sh}} \to \Spec \sO_{X,\bar{x}}^{\mathrm{sh}}$ at every geometric closed point $\bar{x} \to X$. Hence they are all \'etale by \autoref{cor.PGroupTorsors}, and so is $Y \to X$. 

The existence of a quasi-\'etale Galois cover $X' \to X$ such that quasi-\'etale covers over $X'$ are \'etale follows from the methods in \cite{BhattCarvajalRojasGrafSchwedeTucker}, as well as \cite[Theorem 1]{StibitzFundamentalGroups} for $X$ is a variety. However, it also follows by the same methods that there exists a quasi-\'etale Galois cover $\tilde{X} \to X$ with prime-to-$p$ degree such that every quasi-\'etale cover over $\tilde{X}$ with prime-to-$p$ degree is \'etale. We claim that all quasi-\'etale covers over $\tilde{X}$ are \'etale. It suffices to prove this for the connected and further Galois ones. Thus, let $Y \to \tilde{X}$ be a quasi-\'etale Galois cover. Let $H \subset \Gal(Y/\tilde{X})$ be the corresponding $p$-Sylow group, which we may assume nontrivial as otherwise there is nothing to prove. Let $Y \to Y/H \to \tilde{X}$ be the corresponding quotient. Thus, $Y \to Y/H$ is a quasi-\'etale Galois cover with Galois group $H$ and $Y/H \to \tilde{X}$ is quasi-\'etale and so \'etale. In particular, $Y/H$ is a KLT threefold just as $X$ and $\tilde{X}$ are. Hence, by the first part of the corollary, $Y \to Y/H$ must be \'etale and so is $Y \to \tilde{X}$.
\end{proof}

\begin{corollary} \label{cor.FinitenessAndTamenessOfFundGroupsOfBigOpens}
Work in \autoref{setup} assuming $d=3$, $\kay$ to be algebraically closed of characteristic $p>5$, and $(X,\Delta)$ is a log Fano pair. If $U \subset X$ is a big open subset then $\pi_1^{\mathrm{\acute{e}t}}(U)$ is finite and tame. 
\end{corollary}
\begin{proof}
By \autoref{cor.DirectCorollary}, there is a tame quasi-\'etale Galois cover $f\:\tilde{X} \to X$ that is \'etale over $U$ and such that the induced homomorphism $\pi_1^{\mathrm{\acute{e}t}}(\tilde{X}_U) \to \pi_1^{\mathrm{\acute{e}t}}(\tilde{X})$ is an isomorphism. In particular, it suffices to show that $\pi_1^{\mathrm{\acute{e}t}}(\tilde{X})$ is finite and tame. To this end, we observe that $(\tilde{X}, f^* \Delta)$ is a log Fano pair and so rationally chain connected by \cite[Theorem 1.5]{GongyoNakamuraTanakaRationalPointsFanoThreefoldsFiniteFields}. Then, the result follows from \cite{ChambertLoirFundamentalGroupsofRCCVarieties}.
\end{proof}

\appendix

\section{Local Picard schemes and local divisor class groups} \label{sec.LocalPicardSchemes}
Inspired by \cite[II (a)]{MumfordFundGroup}, Grothendieck proposed in \cite[pp. 189-193]{SGA2} the construction of a local Picard scheme; see \cite[Remark 9.4.19]{FGAExplained} for an account. Such a construction would provide a connection between the local cohomology module $H^2_{\mathfrak{m}_{\bar{x}}}(\sO_{X,\bar{x}}^{\mathrm{sh}})$ and the Picard group of the punctured spectrum of $\sO_{X,\bar{x}}^{\mathrm{sh}}$. Also see \cite[\S 3]{EsnaultViehwegSurfaceSingularitiesDominatedSmoothVarieties}, where the $2$-dimensional case is worked out. The construction of a local Picard scheme was obtained by J.~Lipman \cite{LipmanLocalPicardScheme} and subsequently by J.-F.~Boutot \cite{BoutotSchemaPicardLocal}. For the reader's convenience, we briefly recall Boutot's approach and main results. For details, see \cite{BoutotSchemaPicardLocal}.

\emph{Throughout this section, we let $(R,\fram, \kay)$ be a noetherian strictly local $\kay$-algebra and denote by $S$ and $S^{\circ}$ its spectrum and punctured spectrum; respectively.}

For a given $\kay$-algebra $A$, one defines $ R \otimes_{\kay}^{\mathrm{sh}} A$ to be the henselization of $R \otimes_{\kay} A$ along the ideal $\fram \otimes_{\kay} A$. There is a canonical morphism
\[
\alpha_A \: \Spec \bigl( R \otimes_{\kay}^{\mathrm{sh}} A \bigr) \to S,
\]
and one defines
\[
\tilde{S}^{\circ}_A \coloneqq \alpha_A^{-1} \big(S^{\circ} \big).
\]
Next, one considers the presheaf of abelian groups on $\kay$-algebras given by
\[
A \mapsto \Pic \tilde{S}^{\circ}_A.
\]
One defines the \emph{local Picard functor} $\ssPic_{S/\kay}$ to be the sheafification of this presheaf in the \'etale topology on $\kay$-algebras. 

The first main result we need from \cite{BoutotSchemaPicardLocal} is the following:
\begin{theorem}[\cite{BoutotSchemaPicardLocal}] \label{thm.LocalPicardScheme}
Suppose that $\depth R \geq 2$ and $H^2_{\fram}(R)=H^1(S^{\circ}, \sO_{S^{\circ}})$ is finite over $\kay$. Then, $\ssPic_{S/\kay}$ is represented by a (commutative) group-scheme locally of finite type over $\kay$, say $\Pic_{S/\kay}$. Moreover, the tangent space of $\Pic_{S/\kay}$ at the identity is isomorphic to $H^1(S^{\circ}, \sO_{S^{\circ}})$ as $\kay$-modules.
\end{theorem}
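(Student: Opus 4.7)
The plan is to verify the hypotheses of Artin's representability theorem for the functor $\ssPic_{S/\kay}$. By construction it is already an \'etale sheaf of abelian groups, so the goal is to show it is representable by an algebraic space locally of finite type over $\kay$; since $\ssPic_{S/\kay}$ is a commutative group functor, a theorem of M.~Artin then promotes the resulting algebraic space to a scheme.

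First I would check the ``bookkeeping'' Artin axioms: compatibility with filtered colimits of $\kay$-algebras (which follows from the noetherianity of $R$, the compatibility of henselization with filtered colimits, and the finite presentation of line bundles) and representability of the diagonal (a reduction to standard properties of Picard groups). Next I would set up the infinitesimal deformation theory. For a square-zero surjection $A' \twoheadrightarrow A$ of $\kay$-algebras with kernel $I$, deformations of a line bundle on $\tilde{S}^{\circ}_A$ to $\tilde{S}^{\circ}_{A'}$ are governed by the exponential-type exact sequence
\[
1 \to \sO_{\tilde{S}^{\circ}_A} \otimes_A I \xrightarrow{f \mapsto 1 + f} \sO_{\tilde{S}^{\circ}_{A'}}^{\times} \to \sO_{\tilde{S}^{\circ}_A}^{\times} \to 1,
\]
so that deformations form a torsor under $H^1\bigl(\tilde{S}^{\circ}_A, \sO \otimes_A I\bigr)$ with obstructions living in $H^2\bigl(\tilde{S}^{\circ}_A, \sO \otimes_A I\bigr)$. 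Taking $A = \kay$ and $A' = \kay[\epsilon]/(\epsilon^2)$ identifies the Zariski tangent space at the identity with $H^1(S^{\circ}, \sO_{S^{\circ}})$, which under $\depth R \geq 2$ coincides with $H^2_{\fram}(R)$ via the usual local cohomology long exact sequence (combined with $H^i(S,\sO_S) = 0$ for $i > 0$, since $S$ is affine). The finiteness hypothesis then guarantees this tangent space is finite-dimensional over $\kay$, as required by Schlessinger's conditions $(H_1)$--$(H_3)$.

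The main obstacle is \emph{effectivity} (algebraization) of formal deformations: given a compatible system of line bundles on the infinitesimal thickenings $\tilde{S}^{\circ}_{A_n}$ of a $\kay$-point, one must produce a genuine line bundle on $\tilde{S}^{\circ}_{A}$ where $A = \varprojlim A_n$. This is the local analogue of Grothendieck's existence theorem (EGA III), and it cannot be invoked directly because the punctured spectrum $S^{\circ}$ is not proper over $\kay$. The strategy I would follow is Boutot's: exploit the depth hypothesis so that reflexive rank-one sheaves on $\tilde{S}^{\circ}_{A_n}$ extend uniquely (by pushforward) to reflexive sheaves on the affine thickening $\tilde{S}_{A_n}$; apply the classical formal existence theorem on the affine formal scheme; and then restrict back to the punctured spectrum, using the finiteness of $H^2_{\fram}(R)$ to control the higher obstructions that could a priori obstruct this restriction-extension procedure. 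With effectivity in place, Artin's criterion yields representability by an algebraic space locally of finite type over $\kay$, which is automatically a scheme; and the infinitesimal analysis above identifies $T_0 \Pic_{S/\kay}$ with $H^1(S^{\circ}, \sO_{S^{\circ}})$, concluding the proof.
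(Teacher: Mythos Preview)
The paper does not actually prove this theorem; it is stated with a bare citation to Boutot's monograph and no argument is given for the representability part. Your sketch via Artin's representability criterion is a faithful high-level outline of Boutot's approach, and your identification of the tangent space through the exponential sequence over $\kay[\epsilon]/(\epsilon^2)$ matches exactly the argument the paper does spell out in the remark immediately following the theorem (the only portion of the statement the paper elaborates on).
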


\begin{definition}
With notation as in \autoref{thm.LocalPicardScheme}, one refers to $\Pic_{S/\kay}$ as the \emph{local Picard scheme} of $R$ (or $S$). One denotes by $\Pic^0_{S/\kay}$
the connected component of $\Pic_{S/\kay}$ at the identity, which by general principles is an algebraic group over $\kay$; see \cite[Lemma 9.5.1]{FGAExplained} or \cite[Theorem 2.4.1]{BrionStructureTheoremsAlgebraicGroups}. The \emph{local N\'eron--Severi} group is defined as the quotient
\[
\NS_{S/\kay} \coloneqq \Pic_{S/\kay}\bigm/\Pic^0_{S/\kay}.
\]
\end{definition}

\begin{remark} \label{rem.ExistenceCmCase}
With notation as in \autoref{thm.LocalPicardScheme}, suppose that $R$ is quasi-excellent and normal of dimension $d \geq 3$. Then, $\Pic_{S/\kay}$ exists by \autoref{lem.FinitegenerationH^2}.
In that case, the $\kay$-points of $\Pic_{S/\kay}$ correspond to the Picard group of $S^{\circ}$. Precisely,
\[
\Pic_{S/\kay}\big(\Spec(\kay)\big) = \ssPic_{S/\kay} (\kay) = \Pic S^{\circ} \subset \Cl S^{\circ} = \Cl S.
\]
More generally, if $A$ is a strictly local $\kay$-algebra, then the $A$-points on $\Pic_{S/\kay}$ can be computed as follows
\[
\Pic_{S/\kay}\big(\Spec A\big) = \ssPic_{S/\kay}(A) = \Pic \tilde{S}^{\circ}_A
\]
Technically speaking, this is to say that the canonical homomorphism $ \Pic \tilde{S}^{\circ}_A \to \ssPic_{S/\kay}(A)$ is an isomorphism if $A$ is a strictly local $\kay$-algebra; see \cite[II, Proposition 3.2]{BoutotSchemaPicardLocal}. For instance, recall that the tangent space of $\Pic_{S/\kay}$ at the identity is
\[
\mathrm{Lie}(\Pic_{S/\kay}) \coloneqq \ker\Big(\ssPic_{S/\kay}\big(\kay[\varepsilon]/\varepsilon^2\big) \to \ssPic_{S/\kay}(\kay)\Big)
\]
Hence, since $\kay$ and $\kay[\varepsilon]/\varepsilon^2$ are both strictly local $\kay$-algebras, there is a canonical isomorphism
\[
\ker\Big(\Pic\big(S^{\circ}[\varepsilon]/\varepsilon^2\big) \to \Pic(S^{\circ})\Big) \xrightarrow{\cong} \mathrm{Lie}(\Pic_{S/\kay})
\]
and further
\[
\ker\Big(H^1\big(S^{\circ}[\varepsilon]/\varepsilon^2, \sO_{S^{\circ}[\varepsilon]/\varepsilon^2}^{\times} \big) \to H^1(S^{\circ}, \sO_{S^{\circ}}^{\times})\Big) \xrightarrow{\cong} \mathrm{Lie}(\Pic_{S/\kay})
\]
Now, considering the short exact sequence of sheaves on $S^{\circ}[\varepsilon]/\varepsilon^2$
\[
0 \to \sO_{S^{\circ}} \xrightarrow{a \mapsto 1+a \varepsilon}  \sO_{S^{\circ}[\varepsilon]/\varepsilon^2}^{\times} \to \sO_{S^{\circ}}^{\times} \to 1
\]
and the corresponding long exact sequence on cohomology, we see why $\mathrm{Lie}(\Pic_{S/\kay})$ is canonically isomorphic to $H^1(S^{\circ}, \sO_{S^{\circ}}) = H^2_{\fram}(R)$ (\cf \cite[III, Exercise 4.6]{Hartshorne}). For more details, see \cite[II, Proposition 4.2]{BoutotSchemaPicardLocal} whose proof specializes to the above argument by taking $A'\to A \to A_0$ in ibid. to be $\kay[\varepsilon]/\varepsilon^2 \to \kay \to \kay$.
In particular, if $R$ is further Cohen--Macaulay then $\Pic^0_{S/\kay}=0$ as its tangent space at the identity $H^2_{\fram}(R)$ is zero. That is, $\NS_{S/\kay} \cong \Pic_{S/\kay}$.
\end{remark}

\begin{theorem}\cite{BoutotSchemaPicardLocal} \label{thm.FinitenessOfneronSeveri}
Suppose that $\kay$ is algebraically closed, and $R$ is excellent, normal, and has dimension $3$. Then, $\NS_{S/\kay}(\kay)$ is a finitely generated abelian group. 
\end{theorem}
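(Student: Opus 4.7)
Our plan is to reduce the finite generation of $\NS_{S/\kay}(\kay)$ to the classical fact that the N\'eron--Severi group of a projective scheme over an algebraically closed field is finitely generated. The bridge is provided by a resolution of singularities of $S$ together with a careful comparison between the local Picard functor and the Picard functor of a formal neighborhood of the exceptional locus.

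First, we would choose a projective resolution of singularities $f\colon Y \to S$, available since $R$ is excellent and $\dim R = 3$. Let $E \coloneqq f^{-1}(\fram)_{\mathrm{red}}$, a projective scheme over $\kay$, and let $Y_n$ denote the $n$-th infinitesimal thickening of $E$ in $Y$. Since $R$ is normal of dimension $3$, the restriction map $\Pic Y \to \Pic S^{\circ}$ has kernel generated by the (finitely many) classes of components of $E$ and cokernel controlled by whether line bundles on $S^{\circ}$ extend across $E$. Thus it suffices to bound the component group of $\Pic Y$ (or, more accurately, the image of $\Pic Y$ in $\Pic_{S/\kay}$ modulo $\Pic^0_{S/\kay}$) in terms of $\NS(E)$.

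Second, the heart of the argument is comparing $\Pic Y$ with the inverse limit $\varprojlim_n \Pic Y_n$ via formal GAGA applied to $\hat Y \to \Spf \hat R$. Using the short exact sequences
\begin{equation*}
0 \to \sI^n/\sI^{n+1} \xrightarrow{a \mapsto 1 + a} \ker\bigl(\sO_{Y_{n+1}}^\times \to \sO_{Y_n}^\times \bigr) \to 1
\end{equation*}
with $\sI$ the ideal sheaf of $E$ in $Y$, the differences between $\Pic Y_{n+1}$ and $\Pic Y_n$ are controlled by $H^1\bigl(E, \sI^n/\sI^{n+1}\bigr)$ and $H^2\bigl(E,\sI^n/\sI^{n+1}\bigr)$. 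Because $E$ is projective over $\kay$ and the sheaves $\sI^n/\sI^{n+1}$ are coherent, these cohomology groups are finite-dimensional $\kay$-vector spaces, hence contribute only to the identity component of the Picard scheme. Therefore, modulo the connected component of the identity, the component group of $\varprojlim_n \Pic Y_n$ coincides with the component group of $\Pic E_{\mathrm{red}} = \Pic Y_0$, i.e.\ with a subquotient of $\NS(E_{\mathrm{red}})$. The latter is finitely generated by the classical N\'eron--Severi theorem for projective $\kay$-schemes.

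The main obstacle is the rigorous identification of $\NS_{S/\kay}(\kay)$ with a subquotient of the component group of $\varprojlim_n \Pic Y_n$. The local Picard functor $\ssPic_{S/\kay}$ is defined via the henselian (not the completion) tensor product, whereas formal GAGA and the thickening computation above naturally live over $\hat R$. Reconciling these two worlds is precisely where Artin approximation enters: one must show that line bundles defined over $\hat R$-valued fibers descend, up to finitely generated ambiguity, to line bundles seen by $\ssPic_{S/\kay}$ and that the identity components match. This faithful-flatness/approximation comparison, together with ensuring that the cokernel of $\Pic Y \to \Pic_{S/\kay}(\kay)$ is also finitely generated, is the technical heart of the argument and the most delicate step.
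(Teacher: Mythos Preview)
The paper does not give its own proof of this statement; it is simply cited from Boutot's monograph, with the accompanying remark noting that Boutot's argument requires a resolution of singularities (``strongly desingularizable''), which is available in dimension $3$ by Abhyankar and Cossart--Piltant. Your sketch is precisely the strategy Boutot employs: resolve, pass to the formal neighborhood of the exceptional fiber, compare the component group of the local Picard scheme with the N\'eron--Severi group of the projective exceptional divisor $E$ via the thickenings $Y_n$, and use Artin approximation to mediate between the henselian and complete pictures. So your approach is correct and is essentially the one the paper invokes by reference.

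One minor inaccuracy worth flagging: when you write that the restriction $\Pic Y \to \Pic S^{\circ}$ has ``cokernel controlled by whether line bundles on $S^{\circ}$ extend across $E$,'' this is misleading in the present setting. Since $Y$ is regular, $\Pic Y = \Cl Y$, and the exact sequence for open complements gives a \emph{surjection} $\Pic Y \twoheadrightarrow \Cl(Y\smallsetminus E)$ with kernel generated by the $[E_i]$. The issue is rather that $Y\smallsetminus E$ need not equal $S^{\circ}$ if the singular locus of $S$ is one-dimensional (the resolution may also modify curves in $S^{\circ}$), and that $\Pic S^{\circ}$ may sit properly inside $\Cl S^{\circ}$. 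Neither point derails the argument---the extra exceptional components over $\Sing S^{\circ}$ contribute only finitely many classes, and it is $\NS_{S/\kay}(\kay)$ rather than $\Cl S$ that you must bound---but the bookkeeping should be stated accurately. Your identification of the henselian-versus-complete comparison as the genuine technical crux is exactly right; that is where the substance of Boutot's Chapter~V lies.
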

\begin{remark}
Boutot proved \autoref{thm.FinitenessOfneronSeveri} to hold in higher dimensions as well if $R$ is further \emph{strongly desingularizable}; see \cite[V, \S3]{BoutotSchemaPicardLocal}. However, as Boutot remarks, if $\kay$ has characteristic $0$ this condition follows from \cite{HironakaResolution} or else by \cite{AbhyankarResolutionOfSingularities,CossartPiltantResolutionSingularitiesArithmetical3folds} if $\dim R \leq 3$.
\end{remark}

As a direct consequence of \autoref{thm.FinitenessOfneronSeveri}, one obtains the following important corollaries, \cf \cite[V, Corollaire 4.9]{BoutotSchemaPicardLocal}.
\begin{corollary} \label{cor.FiniteGenerationCL}
Work in the setup of \autoref{thm.FinitenessOfneronSeveri}. If $S$ is Cohen--Macaulay, then $ \Pic S^{\circ}$ is a finitely generated abelian group.
\end{corollary}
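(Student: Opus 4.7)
The plan is to reduce the statement to \autoref{thm.FinitenessOfneronSeveri} by showing that, under the Cohen--Macaulay hypothesis, the identity component $\Pic^0_{S/\kay}$ is trivial, so that $\Pic_{S/\kay} = \NS_{S/\kay}$ on $\kay$-points.

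First, I would observe that the local Picard scheme $\Pic_{S/\kay}$ exists in our setting: by \autoref{lem.FinitegenerationH^2} the $R$-module $H^2_{\fram}(R) \cong H^1(S^{\circ},\sO_{S^{\circ}})$ is finitely generated over $\kay$, so \autoref{thm.LocalPicardScheme} applies. By \autoref{rem.ExistenceCmCase}, one then has an identification
\[
\Pic S^{\circ} = \ssPic_{S/\kay}(\kay) = \Pic_{S/\kay}\bigl(\Spec \kay\bigr),
\]
so it suffices to prove that the group of $\kay$-points of $\Pic_{S/\kay}$ is finitely generated.

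Next, I would use the Cohen--Macaulay hypothesis to show $\Pic^0_{S/\kay}$ is trivial. Since $R$ is Cohen--Macaulay of dimension $3$, we have $\depth R = 3 > 2$, hence $H^2_{\fram}(R) = 0$. By the last part of \autoref{thm.LocalPicardScheme} (and spelled out in \autoref{rem.ExistenceCmCase}), the tangent space of $\Pic_{S/\kay}$ at the identity is $H^1(S^{\circ},\sO_{S^{\circ}}) = H^2_{\fram}(R) = 0$. Now $\Pic^0_{S/\kay}$ is a connected commutative group scheme locally of finite type over the algebraically closed field $\kay$, and the vanishing of its tangent space at the identity means that the maximal ideal $\fran$ of its local ring at the identity satisfies $\fran = \fran^2$. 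By Nakayama's lemma this forces $\fran = 0$, so that the identity point is open; by connectedness, $\Pic^0_{S/\kay}$ consists of the single identity point. Hence $\Pic_{S/\kay} = \NS_{S/\kay}$.

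Finally, I would conclude by applying \autoref{thm.FinitenessOfneronSeveri}, which gives that $\NS_{S/\kay}(\kay)$ is a finitely generated abelian group. Combining with the identification in the first step,
\[
\Pic S^{\circ} \;=\; \Pic_{S/\kay}(\kay) \;=\; \NS_{S/\kay}(\kay),
\]
is finitely generated, as desired. The only nonroutine step in this argument is verifying that the vanishing of the Lie algebra of a connected commutative $\kay$-group scheme locally of finite type forces triviality; this is the one place to be careful in positive characteristic, but it goes through via the Nakayama argument above since local finiteness gives a Noetherian local ring at the identity.
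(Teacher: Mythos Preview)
Your proposal is correct and follows essentially the same approach as the paper: the paper's proof simply cites \autoref{rem.ExistenceCmCase} for the fact that $\Pic^0_{S/\kay}=0$ when $R$ is Cohen--Macaulay (because its tangent space $H^2_{\fram}(R)$ vanishes), then takes $\kay$-points and applies \autoref{thm.FinitenessOfneronSeveri}. You have merely unpacked the step ``tangent space zero $\Rightarrow$ $\Pic^0_{S/\kay}$ trivial'' via the Nakayama argument, which the paper leaves implicit.
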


\begin{proof}
As explained in \autoref{rem.ExistenceCmCase}, $\NS_{S/\kay} \cong \Pic_{S/\kay}$ if $R$ is Cohen--Macaulay. Then, by computing $\kay$-points under this isomorphism, we obtain the claimed statement by using \autoref{thm.FinitenessOfneronSeveri}.
\end{proof}

We are thankful to Thomas Polstra for his help with the following corollary, which answers \cite[Question 1]{PolstraATheoremMCM} in dimension $3$.

\begin{corollary} \label{cor.BoundingTorsionn}
Work in the setup of \autoref{thm.FinitenessOfneronSeveri}. If $S$ is rational, then $\Cl S$ is a finitely generated abelian group.
\end{corollary}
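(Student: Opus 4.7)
The plan is to bootstrap the finite generation of $\Pic S^{\circ}$ (provided by \autoref{cor.FiniteGenerationCL}) to the full divisor class group $\Cl S$. Two observations get us started: since $\{\fram\}$ has codimension $3\geq 2$ in $S$, excision of codimension-two data gives $\Cl S=\Cl S^{\circ}$; and since rationality implies Cohen--Macaulayness, \autoref{cor.FiniteGenerationCL} applies and $\Pic S^{\circ}$ is a finitely generated abelian group. It remains to control the cokernel of the canonical injection $\Pic S^{\circ}\hookrightarrow\Cl S^{\circ}$.

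The punctured spectrum $S^{\circ}$ is a two-dimensional excellent normal scheme, so normality forces its singular locus to have codimension at least two in $S^{\circ}$; in dimension two this means $\Sing S^{\circ}=\{x_{1},\ldots,x_{m}\}$ is a finite set of closed points. The standard local-to-global criterion for Cartier-ness on a normal scheme (a Weil divisor is Cartier iff its class vanishes in every Zariski stalk, and only stalks at singular points can contribute) yields an exact sequence
\[
0\longrightarrow \Pic S^{\circ}\longrightarrow \Cl S^{\circ}\longrightarrow \bigoplus_{i=1}^{m}\Cl\sO_{S^{\circ},x_{i}}.
\]
Each stalk $\sO_{S^{\circ},x_{i}}$ is a two-dimensional excellent normal local ring, and it inherits rationality from $R$ by flat base change: a resolution $\pi\:Y\to\Spec R$ localizes to a resolution of $\Spec\sO_{S^{\circ},x_{i}}$, and the vanishing $R^{j}\pi_{*}\sO_{Y}=0$ for $j>0$ is preserved under flat localization.

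Thus each $\sO_{S^{\circ},x_{i}}$ is a two-dimensional rational (surface) singularity, so by the classical theorem of Lipman \cite{LipmanRationalSingularities} its divisor class group is finite---ultimately resting on Mumford's negative-definiteness of the intersection form on the minimal resolution, which allows one to identify the local class group with the cokernel of the intersection matrix on the exceptional curves. Hence $\bigoplus_{i}\Cl\sO_{S^{\circ},x_{i}}$ is finite, so $\Cl S^{\circ}/\Pic S^{\circ}$ embeds in a finite group, and $\Cl S=\Cl S^{\circ}$ is finitely generated as an extension of a finite group by the finitely generated group $\Pic S^{\circ}$.

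The principal nontrivial input beyond \autoref{cor.FiniteGenerationCL} is Lipman's finiteness of the divisor class group for a two-dimensional rational singularity; the delicate point when invoking it is to ensure the statement applies at the level of excellent (not necessarily complete or strictly henselian) normal local rings, which can be handled either by appealing to Lipman's original formulation or by passing to the completion and using that $\Cl$ injects under completion for excellent normal local rings whose completion is normal (automatic here by excellence). The remaining ingredients---codimension-two excision, zero-dimensionality of $\Sing S^{\circ}$, preservation of rationality under flat localization, and the local-to-global sequence for Cartier divisors on a normal scheme---are entirely formal.
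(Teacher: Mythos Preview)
Your proof is correct and follows essentially the same approach as the paper: both use \autoref{cor.FiniteGenerationCL} for $\Pic S^{\circ}$, then control the quotient $\Cl S^{\circ}/\Pic S^{\circ}$ by embedding it into the direct sum of the local class groups $\bigoplus_i \Cl \sO_{S^{\circ},x_i}$ at the finitely many singular points of $S^{\circ}$, each of which is finite as a two-dimensional rational singularity. Your presentation is in fact slightly more streamlined---the paper first passes through $\mathbb{Q}$-factoriality of $S^{\circ}$ to get a torsion bound before invoking the same exact sequence, whereas you go directly to the sequence.
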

\begin{proof}
Since rational threefold singularities are Cohen--Macaulay, $\Pic S^{\circ}$ is finitely generated by \autoref{cor.FiniteGenerationCL}. Note that the punctured spectrum of a rational threefold singularity is $\mathbb{Q}$-factorial; see \cite{LipmanRationalSingularities,EsnaultViehwegSurfaceSingularitiesDominatedSmoothVarieties}. Indeed, $S^{\circ}$ is a $2$-dimensional normal scheme with rational singularities, and so it has isolated $2$-dimensional rational singularities which all have finite (and so torsion) divisor class group \cite[Lemma 3.1]{EsnaultViehwegSurfaceSingularitiesDominatedSmoothVarieties}. In particular, there is $0< n \in \mathbb{N}$ such that $n \cdot \Cl S \subset \Pic S^{\circ}$ (note that $n$ can be taken to be $1$ if $S$ has an isolated singularity meaning that $S^{\circ}$ is regular).  Thus, there is an exact sequence of abelian groups
\[
0 \to \Pic S^{\circ} \to \Cl S \to T \to 0
\]
where $T$ is $n$-torsion. This immediately implies that torsion in $\Cl S$ is bounded. However, under closer inspection, $T$ is finite and hence $\Cl S$ is finitely generated as it is the extension of finitely generated groups. 
To see why $T$ is finite, consider the following. Let $s_1,\ldots, s_k \in S^{\circ}$ be the finitely many singular (rational) points of $S^{\circ}$. Set $S_i \coloneqq \Spec \sO_{S^{\circ}, s_i}$. As we mentioned before, we know that the groups $\Cl S_i = \Pic (S_i \smallsetminus s_i)$ are finite by \cite[Lemma 3.1]{EsnaultViehwegSurfaceSingularitiesDominatedSmoothVarieties}. Note that $\delta \in \Cl S = \Cl S^{\circ}$ belongs to $\Pic S^{\circ}$ if and only if $\delta$ belongs to the kernel of each homomorphism $\Cl S \to \Cl S_i$. In other words,
\[
\Pic S^{\circ} = \ker\bigg(\Cl S \to \bigoplus_{i=1}^k \Cl S_i\bigg)
\]
which implies that $T$ is isomorphic to a subgroup of the finite group $\bigoplus_{i=1}^k \Cl S_i$.
\end{proof}

\clearpage

\section{On local class groups}
\pagestyle{fancy}
\label{sec.Appendix}

\smallskip
\begin{center}by \textsc{J\'anos Koll\'ar}\end{center}
\medskip

The aim of this appendix is to prove 
two propositions  about  factoriality properties of log terminal schemes.

From now on we work with   Noetherian and excellent schemes.

\begin{proposition} \label{dlt.loc.pic.prop}
Let $(y,Y, \Delta)$ be a 3-dimensional,  dlt, local scheme (see Remark~\ref{future.cite}) of residue characteristic $p>0$.  Then the prime-to-$p$ parts of  $\Cl(Y), \Cl(Y^{\mathrm{h}})$ and of   $\Cl(\hat Y)$  are
 finitely generated.  
\end{proposition}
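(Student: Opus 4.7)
The plan is to exploit the dlt structure via a $\Q$-factorial dlt modification $f\colon (Y', \Delta') \to (Y, \Delta)$, which exists in dimension three. Let $E_1, \ldots, E_r$ be its exceptional prime divisors. Since $f$ is an isomorphism in codimension one away from $\bigcup_i E_i$, and since $Y$ is local so $\Cl(Y) = \Cl(Y^{\circ})$ with $Y^{\circ} \coloneqq Y \setminus \{y\}$, pushforward of Weil divisors yields an exact sequence
\[
\bigoplus_{i=1}^{r} \Z \cdot [E_i] \longrightarrow \Cl(Y') \longrightarrow \Cl(Y) \longrightarrow 0.
\]
Thus it suffices to show the prime-to-$p$ part of $\Cl(Y')$ is finitely generated. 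Since $Y'$ is $\Q$-factorial, some fixed integer $N$ kills the quotient $\Cl(Y')/\Pic(Y')$, so the prime-to-$p$ part of that quotient is finite, reducing the question to prime-to-$p$ torsion of $\Pic(Y')$.

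Next, I would invoke Boutot's local Picard scheme (\autoref{thm.LocalPicardScheme}), whose existence is guaranteed here by $\dim Y \geq 3$ together with \autoref{lem.FinitegenerationH^2}. The N\'eron--Severi piece $\NS_{Y/\kay}(\kay)$ is finitely generated by \autoref{thm.FinitenessOfneronSeveri}. A Leray-type analysis for $f\colon Y' \to Y$, combined with the restriction maps $\Pic(Y') \to \Pic(E_i)$ to the projective exceptional surfaces, expresses $\Pic(Y')$ up to finite kernel and cokernel in terms of $\Pic(Y^{\circ})$ and the $\NS(E_i)$, each of which is finitely generated modulo its connected component. The main obstacle is the connected component $\Pic^0_{Y/\kay}$: without rationality (which may fail for dlt threefolds in small characteristic), its prime-to-$p$ torsion over $\bar\kay$ can be divisible---coming from abelian or toric subgroups---and thus is not \emph{a priori} finitely generated on its own.

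To defeat this obstacle, I would exploit the dlt combinatorics of $\bigcup_i E_i$: any class of $\Pic(Y^{\circ})$ whose image lies in $\Pic^0_{Y/\kay}(\kay)$ restricts compatibly to each $\Pic^0(E_i)$, and the rigidity forced by the connectedness of the dual complex of $\{E_i\}$ together with the controlled topological type of dlt exceptional loci pins down such a class modulo a finite prime-to-$p$ group. Finally, for the henselization $Y^{\mathrm{h}}$ and the completion $\hat Y$ the same reduction applies verbatim: both are again local, three-dimensional and dlt of residue characteristic $p$, a $\Q$-factorial dlt modification may be chosen for each, and the Picard-scheme analysis carries over; faithful flatness of $R \to R^{\mathrm{h}} \to \hat R$ controls the kernels and cokernels of the induced maps on class groups, transferring finite generation of the prime-to-$p$ parts. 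The hard step is the rigidity argument for $\Pic^0_{Y/\kay}$ in the possibly non-rational dlt setting, as it is precisely this point where the characteristic-zero strategy via $\NS = \Pic$ for Cohen--Macaulay singularities breaks down.
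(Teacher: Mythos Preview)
Your proposal contains a genuine gap that you yourself flag: the ``rigidity argument'' meant to control the contribution of $\Pic^0$ in the non-rational case is not actually carried out, and the phrase ``the rigidity forced by the connectedness of the dual complex \dots pins down such a class modulo a finite prime-to-$p$ group'' is not a proof. This is precisely the crux of the proposition: without rationality one cannot simply invoke $\NS=\Pic$, and nothing you write explains why the connected component of the local Picard scheme is unipotent (equivalently, has trivial prime-to-$p$ torsion). There is also a conflation of two different objects in your argument: you reduce to $\Pic(Y')$ for the $\Q$-factorial modification $Y'\to Y$, but then invoke Boutot's \emph{local} Picard scheme $\ssPic_{Y/\kay}$ of the base $Y$; these are not the same, and the passage between them via a ``Leray-type analysis'' is left unspecified.

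The paper's proof is structurally quite different and avoids the local Picard scheme entirely. Instead of a single dlt modification, it starts from a log \emph{resolution} $g\colon X\to Y$ (where all completed local rings are regular, hence factorial) and runs an MMP with scaling back down to $Y$, checking that finite generation of the prime-to-$p$ part of the local class groups is preserved at each step. Divisorial contractions are handled by showing that the kernel of $\Pic(X)\to\Pic(E)$ is $p^\infty$-torsion (via the Cascini--Tanaka relative semiampleness theorem) together with $\chi(E,\sO_E)=1$ for the del Pezzo-type exceptional $E$; flips are handled by showing $h^1(C^{\mathrm{wn}},\sO_{C^{\mathrm{wn}}})=0$ for the flipping curve $C$, using the special feature (from the MMP with scaling of a divisor supported on the exceptional locus) that $C$ always lies inside an irreducible component of $\lfloor\Delta\rfloor$. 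The formal-local $\Q$-factoriality needed along the way is supplied by a spreading-out of Lipman's argument for rational surface singularities. Finally, the henselian and complete cases follow from the injections $\Cl(Y)\hookrightarrow\Cl(Y^{\mathrm h})\hookrightarrow\Cl(\hat Y)$, so it suffices to treat $\hat Y$; no separate modification is needed for each.
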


Here $\Cl(\phantom{Y})$ denotes the class group, $Y^{\mathrm h}$ the  Henselisation  and  $\hat Y$ the completion in the topology induced by the maximal ideal.

The {\it $p^\infty$-part} of an Abelian group $A$ is the subgroup 
$A[p^\infty]\subset A$ consisting of elements whose order is a power of $p$.
The quotient  $A/A[p^\infty]$ is the {\it prime-to-$p$ part} of $A$. 
If $A/A[p^\infty]$ is  finitely generated  then $A\cong A[p^\infty]\oplus A/A[p^\infty]$.

\begin{remark}\label{future.cite} The current references  \cite{KollarrelativeMMPwoQfactoriality, Witaszekflipslowchar} cover the cases when $X$ is essentially of  finite type over a perfect field. Arbitrary excellent 3-folds  (including mixed characteristic) are treated in \cite{h-w-p, BhattMaPatakfalviSchwedeTuckerWaldronWitaszekG+RegMMPthrefoldMixChar}.
\end{remark}

\begin{remark} \label{cl.inj.rem}
Let $(y\in Y)$ be a normal,  local domain with Henselisation $Y^{\mathrm h}$ and completion  $\hat Y$. There are injections
$$
\Cl(Y)\into \Cl(Y^{\mathrm h})\into \Cl(\hat Y);
$$
cf.\ \cite[{\S}1C]{MR0453732}. Thus   finite generation for  $\Cl(\hat Y)$ implies finite generation for
$\Cl(Y)$ and $\Cl(Y^{\mathrm h})$.

Frequently $\Cl(Y^{\mathrm h})$ is much bigger than $\Cl(Y)$
(see Example~\ref{cone.exmp} in dimension 3 or \cite[27]{KollarMapsBetweenLocalPicardGroups} for a 2-dimensional  log canonical example in characteristic 0), but $\Cl(Y^{\mathrm h})= \Cl(\hat Y)$ if $Y$ has an isolated singularity \cite[3.10]{ArtinAlgebraicApproximationOfStructuiresOverCmpleteLocalRIngs}.
\end{remark}

Note that Proposition~\ref{dlt.loc.pic.prop} is new only when $(y,Y)$ is not (or not known to be) rational, which  happens only if $\Char k(y)\in \{2,3,5\}$; see  \cite[Cor.1.3]{ArvidssonBernasconiLaciniKVVforLogDelPezzosp>5}.

\begin{example} \label{cone.exmp}
Let $S$ be a normal, del~Pezzo surface over a field $k$ and $L$ an ample line bundle on $S$ such that  $H^1(S, L^m)= 0$ for $m\geq 2$. 
Set $Y\coloneqq\Spec_k \oplus_{m\geq 0} H^0(S, L^m)$, the affine cone over $(S, L)$.  
Then  $\Cl(Y)= \Cl(S)/[L] $   and there is an exact sequence
$$
0\to H^1(S, L)\to \Cl(\hat Y)\to \Cl(S)/[L]\to 0.
$$
If $H^1(S, L)=0$ then $ \Cl(\hat Y)= \Cl(S)/[L]$; this holds if the characteristic is 0 or $\geq 7$.  However, in characteristics $2,3,5$ there are 
examples with $H^1(S, L)\neq 0$; see \cite{CasciniTanakaPLTThreefoldsNonNormalCenters, BernasconiKVFailsLogDelPezzo, ArvidssonBernasconiLaciniKVVforLogDelPezzosp>5}. For these  
 the $p^\infty$-part of $\Cl(\hat Y)$ contains  $H^1(S, L)$, which is infinite if
$k$  is.
\end{example}

\begin{remark}[Local class group and local Picard group] \label{loc.pic.cl.comp.rem}
Let $(y,Y)$ be a 3-dimensional  singularity. Let
$\eta_i\in \Sing Y\setminus \{y\}$ be the generic points.
There is an exact sequence
$$
0\to \picl(y, Y)\to \Cl(Y)\to \bigoplus_i \Cl(Y_{\eta_i}).
$$
Here the $ Y_{\eta_i}$ are 2-dimensional.
If $(y,Y, \Delta)$ is klt then the  $ Y_{\eta_i}$ are   rational, hence
the  $\Cl(Y_{\eta_i})$ are finite  by
\cite{LipmanRationalSingularities}.
Thus for 3-dimensional, klt singularities, 
$\picl(y, Y)\subset  \Cl(Y)$ is a finite index subgroup.

For equicharacteristic local rings \cite{BoutotGroupePicardLocalAnneauHenselien, BoutotSchemaPicardLocal} defines a 
{\it local Picard scheme}  $\picls(y, Y)$.
 Proposition~\ref{dlt.loc.pic.prop} is equivalent to saying that 
 its connected component  is a unipotent algebraic group. 
 
\end{remark}

It is reasonable to expect that the following generalization of Proposition~\ref{dlt.loc.pic.prop}   holds in all dimensions.

\begin{conjecture} \label{dlt.loc.pic.prop.conj}
Let $(y\in Y, \Delta)$ be a complete, dlt, local scheme  and  $p\coloneqq\Char k(y)$. 
Then 
\begin{enumerate}[label=(\arabic*)]
\item $\Cl(Y)/\Cl(Y)[p^\infty]$ is  finitely generated, and
\item $\Cl(Y)[p^\infty]$ has bounded exponent.
\end{enumerate}
\end{conjecture}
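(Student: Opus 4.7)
The plan is to reduce the proposition to a statement about the Picard group of a suitable dlt modification, and then to transfer finite generation from the special fibre. By \autoref{cl.inj.rem}, the maps $\Cl(Y)\hookrightarrow\Cl(Y^{\mathrm h})\hookrightarrow\Cl(\hat Y)$ are injective. An injection $A\hookrightarrow B$ of abelian groups restricts to an injection on prime-to-$p$ quotients, and subgroups of finitely generated abelian groups are finitely generated. It therefore suffices to prove the result for $\Cl(\hat Y)$, so I would replace $Y$ by $\hat Y$ and assume $(y,Y,\Delta)$ is complete local.

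Next, invoking the MMP for dlt $3$-folds (\autoref{future.cite}), I would construct a projective $\mathbb{Q}$-factorial dlt modification $\pi\:T\to Y$ whose exceptional locus over $y$ is the support of a reduced divisor $E = E_1+\cdots+E_r$ with finitely many components. The pushforward gives a surjection $\Cl(T)\twoheadrightarrow\Cl(Y)$ whose kernel is generated by the classes $[E_i]$, so the problem reduces to finite generation of $\Cl(T)/\Cl(T)[p^\infty]$. Since the non-locally-factorial locus of a $\mathbb{Q}$-factorial $3$-fold is a finite set of closed points (\cite{BoissiereGAbberSermanVarietesLocalmentFactorielles}), the quotient $\Cl(T)/\Pic(T)$ is finite, and I may work with $\Pic(T)$ instead.

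The heart of the argument is the comparison with the special fibre. Because $Y$ is complete local and $\pi$ is proper, formal GAGA identifies $\Pic(T)$ with the Picard group of the formal completion $\hat T$ of $T$ along $E$, which equals $\varprojlim_n \Pic\bigl(E^{[n]}\bigr)$ where $E^{[n]}$ is the $n$-th order infinitesimal neighborhood of $E$ in $T$. The kernel of $\Pic\bigl(E^{[n+1]}\bigr)\to\Pic\bigl(E^{[n]}\bigr)$ sits in the long exact sequence of the short exact sequence of sheaves $0 \to \sI^{n+1}/\sI^{n+2} \to \sO_{E^{[n+1]}}^\times \to \sO_{E^{[n]}}^\times \to 1$ (the kernel is additive because $(\sI^{n+1}/\sI^{n+2})^2=0$), and is therefore a quotient of $H^1\bigl(E,\sI^{n+1}/\sI^{n+2}\bigr)$. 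This is an $\mathbb{F}_p$-vector space and hence has no prime-to-$p$ torsion. Consequently $\Pic(T)/\Pic(T)[p^\infty]$ injects into $\Pic(E)/\Pic(E)[p^\infty]$, and the problem reduces to finite generation of the prime-to-$p$ part of the Picard group of the projective dlt surface $E$. This last step I would handle by a Mayer--Vietoris-type argument reducing to the components $E_i$ (each a Koll\'ar component of log Fano type, for which $\Pic^0$ is unipotent on the prime-to-$p$ quotient, in the spirit of \autoref{thm.FundamentalGrouoDelPezzo}) and to their lower-dimensional pairwise and triple intersections.

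The main obstacle I foresee is making the formal-to-schematic passage of the third paragraph rigorous without invoking rationality of $(Y,\Delta)$. In the rational case $R^i\pi_\ast\sO_T = 0$ for $i>0$, the unipotent kernels collapse to zero and the argument is immediate; in the genuinely dlt case these higher direct images can be nonzero, and one must show both that the tower of kernels forms a pro-system of finite-dimensional $\mathbb{F}_p$-vector spaces (a coherence condition on $R^1\pi_\ast\sO_T$ over $\hat Y$) and that even nonzero pro-unipotent contributions carry no prime-to-$p$ torsion over the perfect residue field. Establishing the requisite coherence under dlt alone---without the semistable-reduction or resolution hypotheses available in the rational regime---and handling the surface $E$ in the small residual characteristics $p\in\{2,3,5\}$, where components can have $H^1(E_i,\sO_{E_i})\neq 0$, is where I would expect the delicate analysis to lie.
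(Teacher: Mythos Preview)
The statement you are attempting to prove is labeled a \emph{Conjecture} in the paper, and the paper does not supply a proof of it. What the appendix actually proves is the $3$-dimensional analogue, \autoref{dlt.loc.pic.prop}, and even that only asserts part~(1); part~(2) on bounded exponent of the $p^\infty$-part is not established anywhere. Your proposal silently restricts to dimension~$3$ (you invoke the $3$-fold MMP via \autoref{future.cite}) and never touches part~(2), so at best it is an alternative route to \autoref{dlt.loc.pic.prop}, not a proof of the conjecture as stated.

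Viewed as an attempt at the $3$-dimensional proposition, your strategy and the paper's share the same backbone---pass to a modification, compare with the Picard group of the exceptional locus, and use that the kernel of $\Pic(T)\to\Pic(E)$ is $p^\infty$-torsion---but they diverge in how this is organised. The paper runs the full MMP from a log resolution down to $Y$ and checks that each individual step (divisorial contraction or flip) preserves finite generation of the prime-to-$p$ part, via \autoref{plt.bu.use.cor} and \autoref{when.flip.qfact.pres}; the comparison with $\Pic(E)$ is handled by citing \cite{CasciniTanakaRelativeSemiAmplenessPosChar} rather than by an infinitesimal deformation-theoretic tower. You instead take a single $\mathbb{Q}$-factorial dlt modification $T\to Y$ and try to do everything at once.

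There is a genuine gap in your version. You assert that $\Cl(T)/\Pic(T)$ is finite because the non-locally-factorial locus of $T$ is a finite set of closed points; but finiteness of that set only gives an embedding of $\Cl(T)/\Pic(T)$ into the direct sum of the local class groups $\Cl(\hat T_t)$ at those points, and you have no a~priori control over those---this is precisely the kind of finiteness you are trying to prove, now for $T$ instead of $Y$. The paper's step-by-step MMP avoids this circularity: it starts from a smooth $X$ (where completions are factorial) and propagates the property through each step, using \autoref{lipman.extend.prop} to handle the codimension-$2$ locus uniformly. Your single-modification shortcut would need an independent argument that the singular points of $T$ already have well-behaved formal-local class groups, and that is not available without essentially reproducing the inductive MMP analysis.
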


As part of the proof of Proposition~\ref{dlt.loc.pic.prop} we establish the following result about the 
formal-local class groups. 
This seems to be a special case  of  more general finiteness phenomena, which will be considered elsewhere; see  \cite{BoissiereGAbberSermanVarietesLocalmentFactorielles} for another example.

\begin{proposition} \label{lipman.extend.prop}
 Let $X$ be a normal  scheme.  Assume that its codimension 2 singular points are  rational.  Then there is a closed subset $W\subset X$ of codimension $\geq 3$ such that  
\begin{enumerate}[label=(\arabic*)]
\item  $\Cl(X\setminus W)/\pic(X\setminus W)$ is finite and
\item   $X\setminus W$ is formal-locally  $\mathbb{Q}$-factorial. Moreover, the orders of the formal-local class groups   $\Cl(\hat X_x)$ are uniformly bounded for $x\in X\setminus W$.
\end{enumerate}
\end{proposition}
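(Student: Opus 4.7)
The plan is to combine Lipman's finiteness theorem for $\Cl$ of 2-dimensional rational singularities with openness of the formal-local $\mathbb{Q}$-factorial locus. Since $X$ is normal, $\Sing X$ has codimension at least $2$, and by Noetherianity there are only finitely many generic points $\eta_1, \ldots, \eta_n$ of its codimension 2 components; each is a rational surface singularity by hypothesis, so Lipman gives that $\Cl(\hat{\sO}_{X, \eta_i})$ is finite. I would set $N$ to be the least common multiple of the integers $|\Cl(\hat{\sO}_{X, \eta_i})|$.

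Next, define $U \subset X$ to be the locus of points $x$ at which $\hat{\sO}_{X, x}$ is $\mathbb{Q}$-factorial with $|\Cl(\hat{\sO}_{X, x})|$ dividing $N$, and set $W := X \setminus U$. The key technical step is openness of $U$, a constructibility statement about local Picard groups in the spirit of \cite{BoissiereGAbberSermanVarietesLocalmentFactorielles}. Once openness is granted, $U$ contains every codimension $\leq 2$ point of $X$: regular points are factorial with trivial class group, and each $\eta_i$ has $|\Cl(\hat{\sO}_{X, \eta_i})|$ dividing $N$ by construction. Hence $W$ is closed of codimension $\geq 3$ and statement (2) holds by the definition of $U$. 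For the torsion half of (1), the uniform bound implies $N \cdot [D] = 0$ in $\Cl(\hat{\sO}_{X, x})$ for every $x \in X \setminus W$ and every Weil divisor $D$; faithful flatness of completion then gives that $N \cdot D$ is Cartier at every $x \in X \setminus W$, hence Cartier on all of $X \setminus W$. Thus $\Cl(X \setminus W)/\pic(X \setminus W)$ is $N$-torsion.

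To upgrade torsion to finiteness, I would show that the natural restriction map
\[
\Cl(X \setminus W)/\pic(X \setminus W) \longrightarrow \bigoplus_{i=1}^n \Cl(\hat{\sO}_{X, \eta_i})
\]
is injective; its target is finite and this would conclude (1). Injectivity amounts to the assertion that a divisor which is formal-Cartier at each $\eta_i$ is Cartier on all of $X \setminus W$, propagating Cartierness from the codim 2 singular points to their codim-$\geq 3$ specializations via localization maps $\Cl(\hat{\sO}_{X, x}) \to \Cl(\hat{\sO}_{X, \eta_i})$, using the formal $\mathbb{Q}$-factoriality at $x$ afforded by $x \in U$. The hardest parts of the argument will be establishing the openness of the uniformly-bounded formal $\mathbb{Q}$-factorial locus and this injectivity; for injectivity it may be necessary to enlarge $W$ further (while keeping it within codimension $\geq 3$) to discard the loci where $\overline{\{\eta_i\}}$ has several analytic branches, and to discard the pairwise intersections $\overline{\{\eta_i\}} \cap \overline{\{\eta_j\}}$ for $i \neq j$.
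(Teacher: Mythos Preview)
Your outline identifies the right ingredients—Lipman's finiteness at the generic codimension-$2$ points and the need to spread this uniformly—but the two steps you flag as hardest (openness of $U$ and injectivity of the restriction map) are precisely where all the content lies, and your sketch does not indicate how to prove either one. In particular, openness of the locus where $|\Cl(\hat X_x)|$ divides a fixed $N$ is not a formal consequence of anything in \cite{BoissiereGAbberSermanVarietesLocalmentFactorielles}; that paper treats the Zariski-local $\mathbb{Q}$-factorial locus, not the formal one with a prescribed bound, and its methods are themselves resolution-theoretic. At this level of detail your argument is close to assuming the proposition.

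The paper takes a constructive route that sidesteps both black boxes. It spreads out Lipman's minimal resolution: at each codimension-$2$ singular generic point $\eta_i$ the minimal resolution is obtained by blowing up a single $\mathfrak{m}_{\eta_i}$-primary ideal, and one extends this ideal along $V_i=\overline{\{\eta_i\}}$. After discarding a codimension-$\ge 3$ set (where the blow-ups are not yet resolutions, together with the pairwise intersections $V_i\cap V_j$), the result is a \emph{Lipman-type resolution}: the exceptional locus is a union of $\mathbb{P}^1$-bundles and conic bundles over the $V_i$, with the same dual graph, intersection matrix, and fundamental-cycle filtration at every fibre. Lipman's original argument—lifting sections through that filtration and bounding the cokernel by $\det(C_i\cdot C_j)$—then runs verbatim in the family and is stable under completion. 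This simultaneously yields the openness you wanted and the uniform bound: both $\Cl(X\setminus W)/\Pic(X\setminus W)$ and each $\Cl(\hat X_x)$ inject into the \emph{same} finite group $H(T)$ determined by the intersection matrix, so no separate injectivity argument is needed. If you tried to make your openness claim rigorous you would essentially be forced to build this family resolution, at which point the rest follows without the detour through the restriction map.
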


\subsection*{Proof of Proposition~\ref{lipman.extend.prop}}{\ }

Let $x\in X$ be a codimension 2 singular point. After localizing at $x$ we obtain a rational surface singularity $(s,S)$.  
Then $(s,S)$ is $\mathbb{Q}$-factorial by \cite{LipmanRationalSingularities}. 
Thus if $D\subset X$ is any Weil divisor, then $D$ is $\mathbb{Q}$-Cartier at $x$, hence also in an open neighborhood of $x$. This shows that every 
 Weil divisor $D$ is $\mathbb{Q}$-Cartier outside a  closed subset $W(D)\subset X$ of codimension $\geq 3$. We need to show that $W(D)$ is essentially independent of $D$.  This is not obvious. However, we claim that one can follow the steps of Lipman's proof to get  Proposition~\ref{lipman.extend.prop}. To see this, let us first recall the surface case.

\begin{say}[Summary of Lipman's proof]\label{lipman.prop}
 I mostly follow the notation as in
\cite[10.7--10]{KollarSingulaitieofMMP}. 

Let $f\:T\to S$ be a resolution of singularities with exceptional curves $\{C_i:i\in I\}$.
We need the following claims.
\begin{enumerate}[label=(\arabic*)]
\item There are finite field extensions $F_i/k(s)$ such that  $C_i$ is either $\bP^1_{F_i}$ (we call these lines) or a regular conic in $\bP^2_{F_i}$.
\item There is  a sequence of effective, $f$-exceptional cycles
$Z_1\leq \cdots\leq Z_m$ and a map  $\tau\: \{1,\dots, m\}\to I$ such that
\begin{enumerate}
\item  $\sO_T(-Z_m)$ is $f$-ample, 
\item   $Z_1=C_{\tau(1)}$, 
\item $C_{\tau(i+1)}$ is a line for $i\geq 1$ and
we have exact sequences
$$
0\to \sO_{C_{\tau(i+1)}}(-1)\to \sO_{Z_{i+1}}\to \sO_{Z_{i}}\to 0.
$$
\end{enumerate}
\end{enumerate}
The $\mathbb{Q}$-factoriality is now established in 3 steps. The most important is the following.
\medskip

{\it Claim \ref{lipman.prop}.3.}   Let $L$ be any line bundle on $T$ that  has degree 0 on each  $C_i$. Then $L$ is the pull-back of a line bundle from $S$.
Equivalently, there is an exact sequence
$$
\pic(S)\stackrel{f^*}{\longrightarrow} \pic(T)\stackrel{c_1}{\longrightarrow} \bigoplus_{i\in I}\mathbb{Z}[C_i],
$$
where
$c_1(M)\coloneqq\bigl(\deg(M|_{C_i}):i\in I\bigr)$.

\medskip

Proof. We extend the  sequence of effective, $f$-exceptional cycles
$Z_1\leq \cdots\leq Z_m$ by setting
$Z_{am+i}\coloneqq aZ_m+Z_i$.  Then the sequences
(\ref{lipman.prop}.2.c) become
$$
0\to \sO_{C_{\tau(i+1)}}(-1)\otimes \sO_T(-aZ_m)\to \sO_{Z_{am+i+1}}\to \sO_{Z_{am+i}}\to 0.
$$
Since $Z_m$ is  $f$-ample, 
$$
H^1\bigl(C_{\tau(i)}, \sO_{C_{\tau(i+1)}}(-1)\otimes \sO_T(-aZ_m)\otimes L\bigr)=0,
$$
 hence the the constant 1 section of
$L|_{C_{\tau(1)}}\cong \sO_{C_{\tau(1)}}$ lifts to any order. By the Theorem on Formal Functions, we get that $f_*L$ is a line bundle and $L\cong f^*f_*L$. \qed

\medskip

{\it Claim \ref{lipman.prop}.4.}  $\langle \sO_T(C_i):i\in I\rangle\subset \pic(T)$ is a  subgroup of finite index
 $d\coloneqq\det (C_i\cdot C_j)$.  Denote the quotient by $H(T)$.
\medskip

Proof. In abstract terms, we have a subgroup of $\oplus_{i\in I}\mathbb{Z}[C_i]$ generated by $C_j^*\coloneqq\bigl((C_i\cdot C_j):i\in I\bigr)$. This shows that
$H(T)$ is determined by the matrix $(C_i\cdot C_j)$, which is negative definite by the Hodge index theorem.
The rest is  linear algebra.  \qed

\medskip
Putting Claims~\ref{lipman.prop}.3--4 together gives the following.
\medskip

{\it Claim \ref{lipman.prop}.5.} There is an injection
$\picl(s,S)\into H(T)$.
In particular, $|\picl(s,S)|$ divides $\det (C_i\cdot C_j)$. \qed
\end{say}
\medskip

Let us axiomatize the properties  (\ref{lipman.prop}.1--2).

\begin{definition}  \label{spr.lip.defn} Let $X$ be a normal scheme
with singular locus $V\subset X$ and such that  $V$ is regular.
A {\it Lipman-type resolution} is a resolution
$f\:Y\to X$ with exceptional divisors $\{E_i:i\in I\}$ 
and  the following properties.
\begin{enumerate}[label=(\arabic*)]
\item There are finite, flat  morphisms  $V_i\to V$ such that  $E_i$ is either  $\bP^1_{V_i}$ or a regular conic bundle with irreducible fibers in   $\bP^2_{V_i}$. 
Furthermore, every $V_i\to V$ is the composite of a purely inseparable morphism $V_i\to V^*_i$ with an \'etale morphism $V^*_i\to V$.
\item There is  a sequence of effective, $f$-exceptional cycles
$Z_1\leq \cdots\leq Z_m$ and a map  $\tau: \{1,\dots, m\}\to I$ such that
\begin{enumerate}[label=(\alph*)]
\item  $\sO_{Y}(-Z_m)$ is $f$-ample, 
\item   $Z_1=E_{\tau(1)}$ and 
\item $E_{\tau(i+1)}\cong \bP^1_{V_{\tau(i+1)}}$  for $i\geq 1$ and  we have exact sequences
$$
0\to \sO_{E_{\tau(i+1)}}(-1)\to \sO_{Z_{i+1}}\to \sO_{Z_{i}}\to 0. 
$$
\end{enumerate}
\end{enumerate}
Note that  a Lipman-type resolution is preserved by \'etale morphisms
$U\to X$ and by completions  $\hat X_x\to X$.
The only  complication is that the $E_i$ may become reducible
if $V_i\to V$ is not purely inseparable. We claim that the latter can be achieved after an \'etale base change
$X'\to X$. Indeed,  $V^*_i\to V$ is standard \'etale in a neighborhood of the generic point of $V$  by \cite[\href{https://stacks.math.columbia.edu/tag/02GT}{Tag 02GT}]{stacks-project}, hence it extends to a finite  \'etale cover
$X'_i\to X^\circ_i\subset X$. Taking the fiber product of all of these now gives
$X'\to X^\circ\coloneqq\cap_i X^\circ_i$. 
For the resulting
$f'\:Y'\to X'$ with exceptional divisors  $E'_i$, all the   $V'_i\to V'$
are purely inseparable. 

We call such an $f':Y'\to X'$ a 
{\it stabilized Lipman-type resolution.}  
For such a resolution, the $E'_i$ stay irreducible after completion.
\end{definition}

Proposition~\ref{lipman.extend.prop} is now a direct consequence of the following 2 claims.

\begin{lemma}\label{spr.lip.lem.1}
 Let $X$ be a normal  scheme.  Assume that its codimension 2 singular points are  rational.  Then there is a closed subset $W\subset X$ of codimension $\geq 3$ such that  $X\setminus W$ has a 
Lipman-type resolution.
\end{lemma}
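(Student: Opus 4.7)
The plan is to reduce the problem to the surface case at generic points of the singular locus and then spread out Lipman's construction from \autoref{lipman.prop}, controlling everything outside a subset of codimension $\geq 3$.

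First, since $V = X_{\mathrm{sing}}$ is noetherian, its non-regular locus has codimension $\geq 1$ in $V$, hence codimension $\geq 3$ in $X$. Removing this non-regular locus (and any embedded components), we may assume $V$ is regular. Next, let $V = \bigsqcup_j V_j$ be its decomposition into irreducible components; since $V$ is regular these are disjoint, and I will argue one component at a time, constructing the Lipman-type data over an open neighborhood of (the generic point of) $V_j$ and then concatenating.

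Fix $\eta\coloneqq\eta_j\in V_j$, the generic point, and let $(s,S)\coloneqq\Spec \sO_{X,\eta}$. By hypothesis $(s,S)$ is a 2-dimensional rational surface singularity, so \autoref{lipman.prop} provides a resolution $f_\eta\:T_\eta\to S$ whose exceptional curves $C_i$ are lines or regular conics over finite extensions $F_i/k(\eta)$, together with the increasing chain of fundamental-type cycles $Z_1\leq\cdots\leq Z_m$ from \ref{lipman.prop}.2. This $f_\eta$ is obtained by finitely many blowups of $S$ at closed points and so, by standard spreading-out (\cite[\href{https://stacks.math.columbia.edu/tag/01ZM}{Tag 01ZM}]{stacks-project} or EGA IV\textsubscript{3}), extends to a proper birational morphism $f\:Y\to U$ over some open neighborhood $U$ of $\eta_j$ in $X$, with $Y$ regular in a neighborhood of the generic fiber. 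Let $E_i\subset Y$ be the closure of $C_i$ and $Z_i^Y$ the closure of $Z_i$, both viewed as effective Weil (in fact Cartier, since $Y$ is regular there) divisors in $Y$; after shrinking $U$ I may assume $Y$ is regular everywhere and the $E_i$ and $Z_i^Y$ are $f$-exceptional.

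Now I verify each condition of \autoref{spr.lip.defn} on an open subset of $U$ whose complement in $X$ still has codimension $\geq 3$. For (1): the morphism $E_i\to V_j\cap U$ is proper and its generic fiber is a smooth rational curve or a regular conic. By the openness of smoothness and by flattening via a further blowup and shrinking, I can arrange that $E_i\to V_i\to V_j\cap U$ is the claimed $\bP^1$-bundle or conic bundle (with irreducible fibers) over its Stein factorization $V_i$. The finite morphism $V_i\to V_j\cap U$ is then finite and flat between regular schemes, so by generic smoothness / the structure of finite morphisms of normal schemes \cite[\href{https://stacks.math.columbia.edu/tag/02GT}{Tag 02GT}]{stacks-project}, after a further shrinking it factors as a purely inseparable cover $V_i\to V_i^*$ followed by an \'etale cover $V_i^*\to V_j\cap U$, as required. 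For (2): the exact sequences $0\to \sO_{C_{\tau(i+1)}}(-1)\to \sO_{Z_{i+1}}\to \sO_{Z_i}\to 0$ hold generically and, being sequences of coherent sheaves, extend to a sequence over $U$; exactness is an open condition (the kernel and cokernel sheaves are supported on proper closed subsets of $U$ which, if nontrivial, have codimension $\geq 3$ in $X$ since they miss $\eta_j$). Finally, relative ampleness of $\sO_{Y}(-Z_m^Y)$ over $U$ is open in the base \cite[EGA IV\textsubscript{3}, 9.6.4]{}, and holds generically by \ref{lipman.prop}.2.a, so it holds after removing a further codimension-$\geq 3$ set.

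Performing this construction for each component $V_j$ and intersecting the resulting open subsets gives a single Lipman-type resolution $f\:Y\to X\setminus W$, where $W$ is a closed subset of codimension $\geq 3$ in $X$; concatenating the cycle chains from the different components (which are supported on disjoint exceptional loci, since the $V_j$ are disjoint after our initial shrinking) yields a single increasing sequence $Z_1\leq\cdots\leq Z_m$ with the required properties. The main obstacle in this argument is the joint control in the spreading-out step: the openness of ampleness, of the flatness of $E_i\to V_i$, of exactness of the sequences in \ref{lipman.prop}.2.c, and of the factorization of $V_i\to V_j$ as purely inseparable $\circ$ \'etale must each be invoked, and it is essential that in each case the failure locus sits over a proper closed subset of $V_j$ and hence has codimension $\geq 3$ in $X$. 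Apart from this bookkeeping, the only serious input is Lipman's original structure theorem at the generic point, which is given.
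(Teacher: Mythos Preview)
Your approach is correct and essentially the same as the paper's: reduce to the generic points of the codimension-$2$ singular locus, invoke Lipman's surface theorem there, and spread out while throwing away a codimension $\geq 3$ subset. The paper's version is slightly more streamlined in that it spreads out by extending the $\fram_{x_i}$-primary ideal whose blowup gives the minimal resolution to an ideal sheaf $I_i$ supported along $V_i = \overline{\{x_i\}}$ and then blows up $\cap_i I_i$ on $X\setminus W$ (where $W$ includes the pairwise intersections $V_i\cap V_j$), which gives a single global $Y\to X\setminus W$ directly and sidesteps the gluing-across-components step you leave implicit.
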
 

\begin{lemma}\label{spr.lip.lem.2}
 Let $X$ be a normal scheme that has a
Lipman-type resolution. Then 
\begin{enumerate}[label=(\arabic*)]
\item  $\Cl(X)/\pic(X)$ is finite and
\item   $X$ is analytically $\mathbb{Q}$-factorial.  Moreover, the orders of the local class groups   $\Cl(\hat X_x)$ are uniformly bounded for $x\in X$.
\end{enumerate}
\end{lemma}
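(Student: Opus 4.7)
The strategy is to adapt Lipman's surface-singularity argument \ref{lipman.prop}.3--5 to the relative setting of the Lipman-type resolution $f\colon Y\to X$, with exceptional divisors $\{E_i\}_{i\in I}$ and structure morphisms $\pi_i\colon E_i\to V_i$. Since $Y$ is regular, restriction gives surjections $\pic(Y)\twoheadrightarrow \pic(Y\setminus\bigcup_i E_i)=\Cl(X)$ and $\pic(\widehat Y_x)\twoheadrightarrow \Cl(\widehat X_x)$ with kernel $\langle \sO_Y(E_i)\rangle$. Pulling back gives $f^*\pic(X)\hookrightarrow \pic(Y)$, and locality of $\widehat X_x$ gives $\pic(\widehat X_x)=0$; thus
\[
\Cl(X)/\pic(X)\;\cong\;\pic(Y)\big/\bigl(f^*\pic(X)+\langle\sO_Y(E_i)\rangle\bigr),\qquad \Cl(\widehat X_x)\;\cong\;\pic(\widehat Y_x)/\langle\sO_Y(E_i)\rangle.
\]

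The heart of the argument is the analog of Claim \ref{lipman.prop}.3. Define the fiber-degree map $c_1\colon \pic(Y)\to \bigoplus_i\mathbb{Z}$ sending $L\mapsto (\deg L|_{F_i})_i$, where $F_i$ is a fiber of $\pi_i$; clearly $f^*\pic(X)\subset\ker c_1$. The crucial claim is that for every closed $v\in V$ the induced map on $\pic(\widehat Y_v)$ is injective. If $L\in\ker c_1$, then $L|_{E_i}$ has fiber-degree zero, hence is pulled back from $V_i$ (true for both $\bP^1$- and smooth-conic-bundles, since a degree $0$ line bundle on a geometrically rational curve is trivial); after completion at $v$, the ring $\widehat V_{i,v}$ is semi-local, so $\pic \widehat V_{i,v}=0$ and $\widehat L|_{\widehat E_i}\cong\sO_{\widehat E_i}$. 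Lipman's inductive lifting now applies: tensoring the sequences of \ref{spr.lip.defn}.(2)(c) with $\widehat L\otimes \sO_Y(-aZ_m)$, the required vanishings
\[
H^1\bigl(E_{\tau(j+1)},\,\sO_{E_{\tau(j+1)}}(-1)\otimes \widehat L\otimes \sO_Y(-aZ_m)\bigr)=0
\]
follow from $f$-ampleness of $-Z_m$ (ensuring fiber-degree $\geq -1$), the vanishing $R^1\pi_{i,*}\sO_{E_i}(-1)=0$ on $\bP^1$-bundles, and the vanishing of coherent $H^1$ on a semi-local affine base. The theorem on formal functions then yields that $\widehat f_*\widehat L$ is invertible on $\widehat X_v$ with $\widehat L\cong \widehat f^*\widehat f_*\widehat L$, and locality of $\widehat X_v$ forces $\widehat L\cong \sO_{\widehat Y_v}$.

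Mimicking Claim \ref{lipman.prop}.4, the image $c_1\bigl(\langle\sO_Y(E_i)\rangle\bigr)\subset \bigoplus_i\mathbb{Z}$ is generated by the columns of the relative intersection matrix $M_v=([E_j]\cdot [F_{i,v}])_{i,j}$. A general $2$-dimensional slice through $v$ identifies $M_v$ with the Lipman intersection matrix of the induced rational surface singularity of $X$ in codimension $2$ through $v$; by the Hodge index theorem $M_v$ is negative-definite, so $d_v:=|\det M_v|\neq 0$. Hence $|\Cl(\widehat X_v)|$ divides $d_v$, establishing analytic $\mathbb{Q}$-factoriality at $v$. Since $V$ is regular Noetherian, $d_v$ depends only on the generic point of the component of $V$ through $v$, giving a uniform bound $d$ and completing (2). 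For (1), the global matrix computation gives $|\mathrm{coker}(c_1|_{\langle E_i\rangle})|\leq d$; combined with the formal-local injectivity of $c_1$ modulo $f^*\pic(X)$---which implies that $(\ker c_1)/f^*\pic(X)$ has trivial formal-local stalk at every closed point of $V$ and is hence zero---we conclude that $\Cl(X)/\pic(X)$ is finite.

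The most delicate point will be the formal-function lifting itself: one must carefully verify the cohomology vanishings above over a semi-local (but positive-dimensional) base, and use the purely-inseparable hypothesis in \ref{spr.lip.defn}.(1) to guarantee that $\widehat E_i$ remains irreducible after formal completion---without this, the intersection-matrix analysis would have to be run over a disconnected $E_i$ and Lipman's numerology would break down. The descent from formal-local to global finiteness in part (1) also requires care, since $\pic(V_i)$ could \emph{a priori} enlarge $\ker c_1$ beyond $f^*\pic(X)$, but these contributions are killed by the formal-local vanishing established above.
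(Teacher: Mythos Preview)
Your approach is essentially the paper's: both adapt Lipman's Claims~\ref{lipman.prop}.3--5 to the relative setting, using the exact sequences of Definition~\ref{spr.lip.defn}.(2)(c) to lift trivializations and then bounding everything by the finite group $H(T)$ coming from the intersection matrix. However, there is a genuine gap in your reduction step.

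You write that one must ``use the purely-inseparable hypothesis in \ref{spr.lip.defn}.(1) to guarantee that $\widehat E_i$ remains irreducible after formal completion.'' But Definition~\ref{spr.lip.defn}.(1) does \emph{not} say $V_i\to V$ is purely inseparable: it says $V_i\to V$ factors as a purely inseparable $V_i\to V_i^*$ followed by an \'etale $V_i^*\to V$. The \'etale part means that, after completing at $v\in V$, the scheme $V_i\times_V \widehat V_v$ can split into several components, and hence so can $\widehat E_i$. When this happens your fiber-degree map $c_1$ with values in $\bigoplus_{i\in I}\mathbb{Z}$ (indexed by the \emph{global} $E_i$) is no longer injective on $\pic(\widehat Y_v)$: a line bundle can have opposite degrees on two local components of one $E_i$ and still lie in $\ker c_1$. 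The paper handles this explicitly by first passing to a finite \'etale cover $X'\to X^\circ$ that trivializes all the $V_i^*\to V$ (the ``stabilized'' case in Definition~\ref{spr.lip.defn}), after which each $V'_i\to V'$ is purely inseparable and the $E'_i$ genuinely remain irreducible upon completion. Only then is the group $H(T)$ the same globally and at every completion, giving the uniform injection $\Cl(\widehat X_x)\hookrightarrow H(T)$. You have omitted this reduction.

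A secondary point: your deduction of part~(1) goes through the formal-local injectivity of $c_1$ and then argues that $(\ker c_1)/f^*\pic(X)$ ``has trivial formal-local stalk and is hence zero.'' This vanishing principle for a subquotient of $\pic(Y)$ is not automatic and needs justification (you would need to check that the natural map $f^*f_*L\to L$, which is an isomorphism after every completion along $V$ and over $X\setminus V$, is a global isomorphism). The paper avoids this detour: once one knows $L|_{E_i}$ is $f$-trivial for each $i$, the lifting via the sequences of \ref{spr.lip.defn}.(2)(c) can be run directly over $X$ (after the Zariski-local and \'etale reductions, the bases $V_i$ are affine), yielding $L\cong f^*f_*L$ without any formal-to-global patching.
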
 

 \begin{say}[Proof of Lemma~\ref{spr.lip.lem.1}]  
Let $\{x_i:i\in I\}$ be the  codimension 2 singular points of $X$.  
The minimal resolution at $x_i$ is obtained by blowing-up an $m_{x_i,X}$-primary ideal sheaf
$I_{x_i,X}\subset \sO_{x_i,X}$. We extend these to ideal sheaves 
$I_i\subset \sO_X$ whose co-support is the closure $V_i$ of $x_i$. By blowing-up we get
$Y_i\to X$.  There are open neighborhoods  $x_i\subset U_i\subset X$ such that
$Y_i\times_XU_i\to U_i$ is a Lipman-type resolution.
Set 
$$
W\coloneqq\cup_i (V_i\setminus U_i)\bigcup \cup_{i\neq j} (V_i\cap V_j).
$$
Let $J$ be the restriction of $\cap_i I_i$ to $X\setminus W$. 
Note that the $V_i\cap (X\setminus W)$ are disjoint from each other, hence blowing up $J$ is the same as blowing up the  $I_i|_{ X\setminus W}$ one at a time. 
Thus over each $V_i\cap (X\setminus W)$ we obtain the Lipman-type resolution $Y_i\times_XU_i\to U_i$. \qed
\end{say}

 \begin{say}[Proof of Lemma~\ref{spr.lip.lem.2}]
The claims are Zariski local, we may thus assume that 
$V=\Sing X$ is irreducible.  

The claims also descend from finite \'etale base changes.
Thus, as we noted in Definition~\ref{spr.lip.defn}, we may assume that the  set of irreducible components of the exceptional divisor, and hence the group  $H(T)$ is invariant under  completions.

Note that  $L|_{E_i}$ is $f$-trivial iff its degree on the generic fiber is 0.
If $L|_{E_i}$ is $f$-trivial for every $i$ then, 
 as in Claim~\ref{lipman.prop}.4,
we can use the sequences in Definition~\ref{spr.lip.defn}.2.c to lift this trivialization to get that
$f_*L$ is a line bundle and $L\cong f^*f_*L$. 

As in Claim~\ref{lipman.prop}.5 we get injections
$\Cl(X)/\pic(X)\into H(T)$ and $\Cl(\hat X_x)\into H(T)$ for every completion. \qed
\end{say}

\subsection*{Proof of Proposition~\ref{dlt.loc.pic.prop}}{\ }

First we  take a suitable log resolution $g\:X\to (Y, \Delta)$. If $(Y, \Delta)$ is klt then we can take any resolution obtained by a sequence of blow-ups whose centers lie over $\Sing Y\cup \Sing(\Supp\Delta)$, and such that $\Ex(g)\cup g^{-1}_*\Delta$ is a  simple normal crossing divisor. 
(The most general resolution of 3-dimensional schemes  is proved in  \cite{CossartPiltantResolutionSingularitiesArithmetical3folds}; see also \cite{k-wit}.)
In this case $\Ex(g)$ supports a $g$-ample divisor $H$ and the  coefficients of $H$ can be chosen to be linearly independent over any given finitely generated subfield of $\r$. 

The general dlt case can be reduced to the klt case using \cite[2.43]{KollarMori}.

Then we run a suitable MMP to get  from $X$ to $Y$. On $X$ the completions of the local rings are regular, hence factorial. Thus it is enough to show that
the finite generation of the prime-to-$p$ part is preserved by each MMP step.

First we use that, by \cite{KollarrelativeMMPwoQfactoriality},  the MMP steps are simpler than expected, see Proposition~\ref{mmp.simpler.prop}. Then we discuss how the 
divisor class groups change under MMP steps.
 
For divisorial contractions we use 
Lemma~\ref{plt.bu.use.cor}. Note that assumption (\ref{plt.bu.use.cor}.1) is statisfied by Proposition~\ref{lipman.extend.prop} and 
assumption (\ref{plt.bu.use.cor}.2) holds by \autoref{cla.EulerCharacteristic}.

For  flips  we use Lemma~\ref{when.flip.qfact.pres}. 
In applying it,  a key  issue is the following. 
Let $g\:X\to Z$ be a 3-dimensional flipping contraction with exceptional curve $C=\Ex(g)$. In characteristic 0 we know that $R^1g_*\sO_X=0$. Thus
 $H^1(X,\sO_C)=0$, hence  $\pico(C)=0$.  In positive characteristic 
the vanishing of $R^1g_*\sO_X$ is not known and may not be true.

We go around this problem as follows.
Let   $C^{\mathrm wn}\to C$ denote the weak normalization. 
We prove in Lemma~\ref{when.flip.almost.R1} that $h^1\bigl(C^{\mathrm wn}, \sO_{C^{\mathrm wn}}\bigr)=0$. Therefore  $C$ is homeomorphic to a tree of smooth rational curves, which implies that 
$\pico(C)$ is a unipotent group scheme (hence $p^\infty$-torsion).
This is enough for our purposes. \qed

\begin{proposition} \cite{KollarrelativeMMPwoQfactoriality, Witaszekflipslowchar}\label{mmp.simpler.prop}
 Let $(Y, \Delta)$ be a 3-dimensional, klt scheme  (see Remark~\ref{future.cite}) and $g:X\to Y$ a  log resolution with exceptional divisors $E=\sum E_i$.  Assume that $E$ supports a $g$-ample divisor $H$; we can then choose its  coefficients to be  sufficiently general. 

Then   the MMP starting with  $(X^0, E^0+\Delta^0)\coloneqq(X, E+g^{-1}_*\Delta)$ with scaling of $H$ runs and  terminates with $(Y, \Delta)$.  Each  step  $X^i\dasharrow X^{i+1}$ of this MMP is
\begin{enumerate}[label=(\arabic*)]
\item either a contraction $\phi_i:X^i\to X^{i+1}$, whose exceptional set is an irreducible component of $E^i$,
\item or a flip  $X^i\stackrel{\phi_i}{\longrightarrow}Z^i  \stackrel{\psi_i}{\longleftarrow}(X^i)^+=X^{i+1}$. Moreover, 
there are irreducible components $E^i_1, E^i_2$  such that both $E^i_1,$ and $ -E^i_2$ are $\phi_i$-ample. \qed
\end{enumerate}
\end{proposition}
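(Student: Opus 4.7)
The plan is to appeal to the existence and termination of the relative MMP for dlt threefolds established in the cited works \cite{KollarrelativeMMPwoQfactoriality, Witaszekflipslowchar}, and to extract the structural claims (1) and (2) from a genericity argument on the coefficients of $H$. First, since $g$ is a log resolution of the klt pair $(Y,\Delta)$, I would write
\[
K_X + E + g^{-1}_*\Delta \;=\; g^*(K_Y + \Delta) + F, \qquad F = \sum_i (a_i+1)\,E_i,
\]
where $a_i = a(E_i; Y, \Delta) > -1$ are the log discrepancies. Hence $F$ is effective and $g$-exceptional, so the relative log canonical divisor is $g$-numerically equivalent to an effective divisor supported on $\Ex(g)$. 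This makes the scaling procedure well-defined: at step $i$, one sets $\lambda_i$ to be the minimal $t \geq 0$ for which $K_{X^i} + E^i + \Delta^i + tH^i$ is $g$-nef, and contracts the extremal ray on which that divisor has degree zero.

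Next, I would invoke the cited results to guarantee existence of each divisorial contraction or flip and to conclude termination. Since the pushforward of $K_X + E + g^{-1}_*\Delta$ to $Y$ coincides with $K_Y + \Delta$, which is already a log canonical model over itself, the negativity lemma forces the MMP to contract the excess $g$-exceptional divisor $F$ in its entirety, so the procedure ends at $(Y,\Delta)$.

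For the structure of each step, observe that the contracted extremal ray $R_i$ is spanned by curves mapping to points of $Y$ and hence lying in $E^i$. For a divisorial step $\phi_i$, the exceptional divisor $D_i \subset E^i$ satisfies $D_i \cdot R_i < 0$; a sufficiently general choice of the coefficients of $H$ forces exactly one irreducible component of $E^i$ to have negative intersection with $R_i$, which gives (1). For a flipping step, the flipping curves again lie in $E^i$, and genericity yields exactly one component $E^i_1$ with $E^i_1 \cdot R_i > 0$ and exactly one $E^i_2$ with $E^i_2 \cdot R_i < 0$, so that $E^i_1$ and $-E^i_2$ are both $\phi_i$-ample, giving (2). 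The hard part will be the invocation of MMP existence and termination in the claimed low and mixed characteristic generality, which is precisely the content of the cited works; conditional on those inputs, the structural statements collapse to a genericity argument on the finite-dimensional relative N\'eron--Severi space spanned by the $E^i_j$.
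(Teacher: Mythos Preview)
The paper does not supply its own proof of this proposition: the \qed appears inside the statement, signalling that the result is quoted wholesale from \cite{KollarrelativeMMPwoQfactoriality, Witaszekflipslowchar} (with the generality caveat of Remark~\ref{future.cite}). So there is nothing to compare against beyond the bare citation.

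Your sketch is a reasonable reconstruction of why the cited results yield the statement. A couple of small points. First, a terminological slip: you call the $a_i$ ``log discrepancies'' but then impose $a_i>-1$ and write $F=\sum (a_i+1)E_i$; those are the ordinary discrepancies, and the log discrepancies are the $a_i+1>0$. Second, for item (1) you do not really need genericity to get irreducibility of the contracted divisor once you know each $X^i$ is $\bQ$-factorial (the exceptional set of an extremal divisorial contraction from a $\bQ$-factorial variety is automatically a prime divisor, and it is $g$-exceptional because the ray lies over a point of $Y$). The genericity of the coefficients of $H$ is what drives the argument in the non-$\bQ$-factorial setting of \cite{KollarrelativeMMPwoQfactoriality} and, more to the point here, is what singles out the two distinguished components $E^i_1,E^i_2$ in the flip case: since $H$ is $g$-ample and $g$-exceptional, the negativity lemma forces $-H\geq 0$, so $H^i\cdot R_i>0$ already produces a component with \emph{negative} intersection with $R_i$; the existence of a component with \emph{positive} intersection then comes from the relation $(F^i+\lambda_i H^i)\cdot R_i=0$ together with the linear independence of the $h_j$, rather than from the sign heuristic you wrote. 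None of this affects the overall correctness of your outline, but it is worth getting the signs straight if you intend to flesh it out.
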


\medskip
Divisorial contractions are handled by the following.

\begin{lemma} \label{plt.bu.use.cor}
Let $(y,Y)$ be a normal,  local  scheme and
$\pi\:X\to Y$ a proper modification with reduced exceptional set $E$, where $X$ is normal.   
Assume that  $p\coloneqq\Char k(y)>0$ and the following hold. 
\begin{enumerate}[label=(\arabic*)]
\item There is a  finite point set $P\subset E$ such that
\begin{enumerate}[label=(\alph*)]
\item $\pic(X\setminus P)\subset\Cl(X\setminus P)$ has finite index, and 
\item  the prime-to-$p$ part of  $\picl(x, X)$ is
 finitely generated for all $x\in P$.
\end{enumerate}
\item The prime-to-$p$ part of  $\pic(E)$ is
 finitely generated.
\end{enumerate}
Then the prime-to-$p$ part of  $\Cl(Y)$ is
 finitely generated.
\end{lemma}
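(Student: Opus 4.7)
The plan is to peel off the contributions to $\Cl(Y)$ in stages until only quantities bounded by the hypotheses remain. Since $\pi\colon X\to Y$ is a proper birational morphism between normal schemes, pushforward of Weil divisors gives a surjection $\pi_*\colon\Cl(X)\twoheadrightarrow\Cl(Y)$ whose kernel is generated by the classes of the finitely many $\pi$-exceptional prime divisors, hence is finitely generated. Therefore it suffices to show that the prime-to-$p$ part of $\Cl(X)$ is finitely generated. As $P$ consists of finitely many points of codimension at least $2$ in $X$ (codimension $3$ in the intended application), we have $\Cl(X)=\Cl(X\setminus P)$, and by hypothesis (1a) the inclusion $\pic(X\setminus P)\subset\Cl(X\setminus P)$ has finite index. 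Thus the problem reduces to bounding the prime-to-$p$ part of $\pic(X\setminus P)$.

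Next I would invoke the standard exact sequence relating the Picard group of an open subscheme of a normal scheme with the local Picard groups at the removed points:
\[
0\to\pic(X)\to\pic(X\setminus P)\to\bigoplus_{x\in P}\picl(x,X),
\]
which expresses the fact that a line bundle on $X\setminus P$ extends to $X$ precisely when its image in each local Picard group $\picl(x,X)=\pic(\Spec\sO_{X,x}\setminus\{x\})$ is trivial. By hypothesis (1b) the prime-to-$p$ part of the right-hand sum is finitely generated, so the problem further reduces to bounding the prime-to-$p$ part of $\pic(X)$ itself.

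For the final bound I would analyze the restriction map $\rho\colon\pic(X)\to\pic(E)$, whose image is controlled by the finitely generated prime-to-$p$ group of hypothesis (2). It remains to show that the kernel $K=\ker\rho$ has trivial prime-to-$p$ part. Since $\pi$ is proper and $(y,Y)$ is local, faithfully flat descent along the completion $Y\to\hat Y$ gives an injection $\pic(X)\hookrightarrow\pic(X\times_Y\hat Y)$, and by Grothendieck's formal existence theorem the target equals $\varprojlim_n\pic(X_n)$, where $X_n$ denotes the $n$-th infinitesimal thickening of $E$ inside $X\times_Y\hat Y$. The kernel of each transition map $\pic(X_{n+1})\to\pic(X_n)$ is a quotient of $H^1(E,\sI^n/\sI^{n+1})$ via the truncated-exponential identification $1+\sI^n/\sI^{n+1}\cong\sI^n/\sI^{n+1}$, which is valid since the square of the ideal of $X_n$ inside $X_{n+1}$ vanishes. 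Each such cohomology group is a coherent $\sO_E$-module, and since $E$ is proper over $k(y)$ of characteristic $p$, its underlying abelian group has exponent $p$. Therefore $\ker(\pic(X\times_Y\hat Y)\to\pic(E))$ is $p^\infty$-torsion and has trivial prime-to-$p$ part, and the same follows for $K$.

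The main obstacle will be this last step: one has to be careful with the truncated-exponential identification in residue characteristic $p$, and with the fact that the inverse limit of groups of exponent $p$ is only $p^\infty$-torsion rather than bounded, though the prime-to-$p$ vanishing does survive the limit because every element is killed by some power of $p$. Combining all the reductions then produces the claimed finite generation of the prime-to-$p$ part of $\Cl(Y)$.
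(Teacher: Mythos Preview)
Your reduction strategy is exactly the paper's: pass from $\Cl(Y)$ to $\Cl(X)$ via $\pi_*$, identify $\Cl(X)=\Cl(X\setminus P)$, use (1a) to get to $\pic(X\setminus P)$, use the exact sequence
\[
\pic(X)\to\pic(X\setminus P)\to\bigoplus_{x\in P}\picl(x,X)
\]
together with (1b) to get to $\pic(X)$, and finally restrict to $E$. The divergence, and the gap, is in the last step.

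Your infinitesimal argument does not establish that $K=\ker\bigl(\pic(X)\to\pic(E)\bigr)$ is $p^\infty$-torsion. The kernels of the successive maps $\pic(X_{n+1})\to\pic(X_n)$ are indeed $p$-torsion, so $\ker\bigl(\pic(X_n)\to\pic(E)\bigr)$ is $p^n$-torsion. But the inverse limit of a tower of $p^n$-torsion groups need not be $p^\infty$-torsion: $\varprojlim_n\mathbb{Z}/p^n=\mathbb{Z}_p$ is torsion-free. Your closing sentence ``the prime-to-$p$ vanishing does survive the limit because every element is killed by some power of $p$'' is precisely what fails in this example. So the formal GAGA route alone cannot rule out a $\mathbb{Z}_p$-like contribution sitting inside $K$.

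The paper closes this gap by invoking a genuinely nontrivial input: the relative semi-ampleness theorem of Cascini--Tanaka (a positive-characteristic version of Keel's theorem), which gives that $K$ is torsion. Once $K$ is known to be torsion, any $L\in K$ of order $m$ prime to $p$ yields a connected degree-$m$ \'etale cover of $X$ whose restriction to the fibre $E_y$ is trivial; by the topological invariance of the \'etale site over a henselian base (e.g.\ \cite[\href{https://stacks.math.columbia.edu/tag/0BQC}{Tag 0BQC}]{stacks-project}) this forces $m=1$. Hence $K$ is $p^\infty$-torsion. The Cascini--Tanaka step is the missing ingredient in your argument.
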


Proof.  Note first that $\Cl(Y)\cong \Cl(X)/\langle E_i:i\in I\rangle$, 
 where  $\{E_i\subset E: i\in I\}$ are the irreducible components that have codimension 1 in $X$. Thus our claim is equivalent to
 proving that the prime-to-$p$ part of  $\Cl(X)$ is
 finitely generated.
Note that $\Cl(X\setminus P)= \Cl(X)$ and,
by assumption (1.a), 
$\pic(X\setminus P)\subset \Cl(X\setminus P)$ has finite index.
Extending divisors from $X\setminus P $ to $X$ gives an exact sequence
$$
\pic(X)\to  \pic(X\setminus P)\to \bigoplus_{x\in P} \picl(x, X).
$$
 It remains  to show that
 the prime-to-$p$ part of  $\pic(X)$ is
 finitely generated. 
The prime-to-$p$ part of $\pic(E)$ is finitely generated by assumption (2), and the kernel of
$\pic(X)\to \pic(E)$ is  $p^\infty$-torsion by \cite{CasciniTanakaRelativeSemiAmplenessPosChar}.\qed
\medskip

(Note that  \cite{CasciniTanakaRelativeSemiAmplenessPosChar} deals with $\bF_p$-schemes; this is all we need for
Proposition~\ref{dlt.loc.pic.prop}. We plan to return to  the general case later; see also \cite{WitaszekKeel'sBPFMixedChar}. 
Also, \cite{CasciniTanakaRelativeSemiAmplenessPosChar} states only that the kernel of
$\pic(X)\to \pic(E)$ is torsion. If $L$ is in the kernel and has order $m$ not divisible by $p$, then it gives a degree $m$ \'etale cover of $X$ that is trivial on $E$, hence on $E_y$. Thus $m=1$, cf.\ \cite[3.1]{ArtinAlgebraicApproximationOfStructuiresOverCmpleteLocalRIngs} or  \cite[\href{https://stacks.math.columbia.edu/tag/0BQC}{Tag 0BQC}]{stacks-project}.)

\medskip

For flips we need a more precise version.

\begin{lemma} \label{plt.small.use.cor}
Let $(y,Y)$ be a 3-dimensional, normal, complete, local  scheme and
$\pi\:X\to Y$ a proper modification,  where $X$ is normal and its codimension 2 singular points are rational.  
Assume that  $C\coloneqq\red\pi^{-1}(y)$ is 1-dimensional and $p\coloneqq\Char k(y)>0$. 
The following are equivalent.
\begin{enumerate}[label=(\arabic*)]
\item  The prime-to-$p$ part of  $\Cl(\hat X_x)$ is
 finitely generated for all $x\in C$, and  $h^1\bigl(C^{\mathrm wn}, \sO_{C^{\mathrm wn}}\bigr)=0$.
\item The prime-to-$p$ part of  $\Cl(Y)$ is
 finitely generated.
\end{enumerate}
\end{lemma}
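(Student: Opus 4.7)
The strategy follows the pattern of Lemma~\ref{plt.bu.use.cor}, adapted to the fact that $\pi\colon X \to Y$ is now a small modification ($\dim C = 1$ in a threefold) rather than a divisorial one. The first step is to exploit smallness: proper transform induces an isomorphism $\Cl(Y) \cong \Cl(X)$, reducing the problem to finite generation of $\Cl(X)^{(p)}$. Apply Proposition~\ref{lipman.extend.prop} to $X$ (whose codimension-$2$ singular points are assumed rational) to obtain a finite set $W$ of closed points of $X$, automatically of codimension $\geq 3$, such that $\Cl(X \setminus W)/\Pic(X \setminus W)$ is finite and $|\Cl(\hat X_x)|$ is uniformly bounded for $x \in X \setminus W$. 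Setting $P \coloneqq W \cap C$ (enlarged if necessary to contain every point of $C$ where control might fail), the equalities $\Cl(X) = \Cl(X \setminus P)$ and the finite-index containment $\Pic(X \setminus P) \subset \Cl(X \setminus P)$ reduce the task to proving finite generation of $\Pic(X \setminus P)^{(p)}$.

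Next I will use the extension-of-line-bundles exact sequence
\[
0 \to \Pic(X) \to \Pic(X \setminus P) \to \bigoplus_{x \in P} \picl(x, X).
\]
The local terms $\picl(x, X)$ inject into $\Cl(\hat X_x)$ by Remark~\ref{cl.inj.rem}, whose prime-to-$p$ part is finitely generated by hypothesis~(1)(a). So it remains to control $\Pic(X)^{(p)}$. As at the end of the proof of Lemma~\ref{plt.bu.use.cor}, the kernel of the restriction $\Pic(X) \to \Pic(C)$ is $p^\infty$-torsion by \cite{CasciniTanakaRelativeSemiAmplenessPosChar}, so the task reduces to showing $\Pic(C)^{(p)}$ is finitely generated. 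Passing to the weak normalization $\nu\colon C^{\wn} \to C$, the cokernel of $\sO_C^\times \hookrightarrow \nu_* \sO_{C^{\wn}}^\times$ is a skyscraper sheaf supported on the non-weakly-normal locus and built from $p$-power-torsion additive contributions (since $\nu$ is a universal homeomorphism with purely inseparable residue field extensions), so the kernel of $\nu^*\colon \Pic(C) \to \Pic(C^{\wn})$ is $p^\infty$-torsion. Hypothesis~(1)(b), $h^1(C^{\wn}, \sO_{C^{\wn}}) = 0$, then forces the reduced, weakly normal, proper curve $C^{\wn}/k(y)$ to be a disjoint union of trees of smooth rational curves over finite extensions of $k(y)$; in particular $\pico(C^{\wn}) = 0$ and $\Pic(C^{\wn})$ is finitely generated, completing the chain of reductions for $(1) \Rightarrow (2)$.

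For the reverse implication $(2) \Rightarrow (1)$, I would run the same exact sequences in the opposite logical direction: any non-finitely-generated prime-to-$p$ part in some $\Cl(\hat X_x)$ with $x \in C$, or any failure of the vanishing $h^1(C^{\wn}, \sO_{C^{\wn}}) = 0$ producing a non-unipotent summand of $\pico(C^{\wn})$, would propagate upward through these sequences and contradict the finite generation of $\Cl(Y)^{(p)}$. The main obstacle I anticipate is the formal-local step of this reverse direction at points $x \in C \setminus P$: since the map $\Cl(Y) = \Cl(X) \to \Cl(\hat X_x)$ need not be surjective, deducing (1)(a) from (2) requires combining the uniform bound on $|\Cl(\hat X_x)|$ provided by Proposition~\ref{lipman.extend.prop} with an Artin-type approximation comparing the global and formal-local class groups, e.g.\ along the lines of \cite[3.10]{ArtinAlgebraicApproximationOfStructuiresOverCmpleteLocalRIngs}.
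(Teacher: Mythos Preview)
Your argument for $(1)\Rightarrow(2)$ is essentially what the paper does: it simply says ``goes as in Lemma~\ref{plt.bu.use.cor}'' and leaves the details, which you have correctly supplied. Your handling of $\Pic(C)$ via the weak normalization is more explicit than the paper's, but the idea is the same.

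For $(2)\Rightarrow(1)$, however, you are missing the key point, and your sketch as written does not close. The obstacle you flag --- that $\Cl(X)\to\Cl(\hat X_x)$ need not be surjective --- is exactly what must be overcome, and Artin approximation is not the mechanism. The crucial hypothesis you are not using is that $(y,Y)$ is \emph{complete}. Since $\pi$ is proper and $C=\red\pi^{-1}(y)$, the scheme $X$ is complete in the $\sI_C$-adic topology (formal functions / Grothendieck existence). Now take $x\in C$ and a divisor $D_x\subset\hat X_x$ that is Cartier off $x$. One can move $D_x$ in its linear equivalence class so that it meets $C$ only at $x$; then, by the completeness of $X$ along $C$, this formal divisor is the completion of an honest divisor on $X$. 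This produces a \emph{surjection}
\[
\pic(X\setminus\{x\})\twoheadrightarrow \picl(x,\hat X_x),
\]
and finite generation of the prime-to-$p$ part of the source (which is $\Cl(Y)$ up to a finite quotient) passes to the target. Combined with Remark~\ref{loc.pic.cl.comp.rem}, this gives the $\Cl(\hat X_x)$ statement directly, with no approximation theorem needed. The paper treats only this ``interesting part'' of the converse and does not spell out the $h^1$ vanishing.
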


Proof. The proof of  (1) $\Rightarrow$ (2)  goes as in
(\ref{plt.bu.use.cor}). 

For the converse the interesting part is to show that the prime-to-$p$ part of  $\Cl(\hat X_x)$ is
 finitely generated for all $x\in C$.
As we noted in Remark~\ref{loc.pic.cl.comp.rem}, it is enough to prove 
that the prime-to-$p$ part of  $\picl(x,\hat X_x)$ is
 finitely generated.

 Let
$D_x\subset \hat X_x$ be a divisor that is Cartier outside $x$. Then it is linearly equivalent to a divisor $D'_x \subset \hat X_x$ such that
$C\cap D'_x=\{x\}$. Since $X$ is complete in the topology induced by the ideal sheaf of $C$, we can view $D'_x$ as a divisor on $X$. That is, there is a
surjection
$$
\pic(X\setminus \{x\})\onto \picl(x, \hat X_x).
$$
Thus if the prime-to-$p$ part of  $\picl(y, Y)$ is
 finitely generated, then so is the 
prime-to-$p$ part of  $\picl(x, \hat X_x)$. \qed

\medskip
Applying (\ref{plt.small.use.cor}) twice gives the following.

\begin{corollary} \label{when.flip.qfact.pres} Consider a 3-dimensional flip
$$
(C\subset X,\Delta)\stackrel{g}{\longrightarrow} Z\stackrel{\hphantom{aa}g^+}{\longleftarrow} (C^+ \subset X^+ , \Delta^+).
$$  Assume that 
the prime-to-$p$ part of  $\Cl(\hat X_x)$ is
 finitely generated for all $x\in C$ and that  $h^1\bigl(C^{\mathrm wn}, \sO_{C^{\mathrm wn}}\bigr)=0$. Then the prime-to-$p$ part of  $\Cl(\hat X^+_{x^+})$ is
 finitely generated for all $x^+\in C^+$. \qed
\end{corollary}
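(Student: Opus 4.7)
The overall strategy is dictated by the statement: we use the flipping contraction $g\colon X\to Z$ and the flipped contraction $g^+\colon X^+\to Z$ in turn, applying Lemma~\ref{plt.small.use.cor} once in the direction $(1)\Rightarrow(2)$ and then once in the converse direction $(2)\Rightarrow(1)$. Since the lemma is local on the base, the plan is to fix a closed point $z\in g(C)\subset Z$ (equivalently, a closed point in $g^+(C^+)$, as $g$ and $g^+$ have the same non-isomorphism locus on $Z$), complete $Z$ at $z$, and work with the induced proper modifications $\hat X_z\to\hat Z_z$ and $\hat X^+_z\to\hat Z_z$, whose reduced exceptional fibers are the connected components $C_z\subset C$ and $C^+_z\subset C^+$ lying over $z$. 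Since every $x^+\in C^+$ lies over some such $z$, this reduces the statement to the local case at each $z$.

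The first step is to apply Lemma~\ref{plt.small.use.cor} to $\hat X_z\to\hat Z_z$ in the direction $(1)\Rightarrow(2)$. Here the hypotheses of the corollary give exactly what is needed: the prime-to-$p$ part of $\Cl(\hat X_x)$ is finitely generated for every $x\in C$ (in particular for every $x\in C_z$), and $h^1(C_z^{\mathrm{wn}},\sO_{C_z^{\mathrm{wn}}})=0$ holds as a direct consequence of the assumption $h^1(C^{\mathrm{wn}},\sO_{C^{\mathrm{wn}}})=0$ (weak normalization commutes with passing to connected components, and the $h^1$ of a disjoint union is the sum of the $h^1$'s). I also need to verify that the codimension-$2$ singularities of $\hat X_z$ are rational, which follows from the klt hypothesis via \cite{LipmanRationalSingularities,HaraDimensionTwo} (since klt surface singularities in characteristic $p>5$ are strongly $F$-regular, hence rational; for the appendix, this is a standard input that should be invoked). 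The conclusion of this first application is that the prime-to-$p$ part of $\Cl(\hat Z_z)$ is finitely generated.

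For the second step, I apply Lemma~\ref{plt.small.use.cor} to the flipped contraction $\hat X^+_z\to\hat Z_z$, now using the converse direction $(2)\Rightarrow(1)$. The hypothesis of (2), finite generation of the prime-to-$p$ part of $\Cl(\hat Z_z)$, has just been established. A key observation, visible from the proof of Lemma~\ref{plt.small.use.cor}, is that the converse direction $(2)\Rightarrow(1)$ only outputs finite generation of the local class groups $\Cl(\hat X^+_{x^+})$ for $x^+\in C^+_z$; it does \emph{not} require or output $h^1$-vanishing on $C^+_z$. This is fortunate, because the corollary does not assume anything about $C^+$, and $h^1$-vanishing is not in general preserved under flips. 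Thus the output of this second application is precisely the desired finite generation of the prime-to-$p$ part of $\Cl(\hat X^+_{x^+})$ for each $x^+\in C^+_z$.

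The main point to check carefully—and the only place where one must be attentive—is the first step: that the hypothesis $h^1(C^{\mathrm{wn}},\sO_{C^{\mathrm{wn}}})=0$ descends to each connected component $C_z$, and that the codimension-$2$ singularities of $\hat X_z$ remain rational after completion so that Lemma~\ref{plt.small.use.cor} applies. Both are routine: the first is an additivity property of cohomology under disjoint union of weak normalizations, and the second follows because completion of a $2$-dimensional rational singularity is again rational (by flatness of completion and preservation of $R^i\pi_\ast\sO$). Once these are in place, the two applications of Lemma~\ref{plt.small.use.cor} yield the corollary, ranging over all $z$ in the (finite) flipping locus.
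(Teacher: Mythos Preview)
Your proposal is correct and is exactly the paper's approach: the proof given in the appendix is the one-line ``Applying~(\ref{plt.small.use.cor}) twice gives the following,'' which is precisely your two-step plan of using $(1)\Rightarrow(2)$ along $g$ and then $(2)\Rightarrow(1)$ along $g^+$. Your observation that the $(2)\Rightarrow(1)$ direction only needs to yield the class-group statement (and not the $h^1$-vanishing on $C^+$) is the right reading of the proof of Lemma~\ref{plt.small.use.cor}; note also that rationality of the codimension-$2$ points of $X$ and $X^+$ needs only that klt surface singularities are rational (Lipman), so no restriction on $p$ is required there.
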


It remains to show that the condition  $h^1\bigl(C^{\mathrm wn}, \sO_{C^{\mathrm wn}}\bigr)=0$ holds  for the flips in 
Proposition~\ref{mmp.simpler.prop}.2.

\begin{lemma} \label{when.flip.almost.R1}
Let $g:( X,\Delta)\to Z$ be a 3-dimensional flipping contraction with exceptional curve $C=\Ex(g)$. Assume that there is an irreducible component $S\subset \rdown{\Delta}$ such that 
$C\subset S$.  
Then  $h^1\bigl(C^{\mathrm wn}, \sO_{C^{\mathrm wn}}\bigr)=0$.
\end{lemma}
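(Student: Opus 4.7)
The plan is to restrict the flipping contraction to the prime component $S$ and reduce the question to a vanishing statement for a surface contraction. Since $(X,\Delta)$ is dlt with $S \subset \rdown{\Delta}$, the divisor $S$ is normal, and by adjunction the pair $(S, \Delta_S \coloneqq \diff_S(\Delta-S))$ is klt in a neighborhood of $S$. The restriction $g|_S \colon S \to T \coloneqq g(S)$ is then a proper birational morphism of normal surfaces with $\Ex(g|_S) = C$ (since $C = \Ex(g) \subset S$ by hypothesis), and $-(K_S+\Delta_S) = -(K_X+\Delta)|_S$ is $g|_S$-ample.

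The main ingredient is the vanishing $R^1(g|_S)_*\sO_S = 0$ at $z \coloneqq g(C)$. This follows from the facts that $(S, \Delta_S)$ is klt and $(T, (g|_S)_*\Delta_S)$ is klt, so both $S$ and $T$ have rational singularities; combined with a Leray-type comparison on a resolution $\mu\colon \tilde S \to S$ (using $R^i\mu_*\sO_{\tilde S}=0$ for $i>0$ from rationality of $S$, and $R^i\tilde g_*\sO_{\tilde S}=0$ from rationality of $T$ with $\tilde g = g|_S \circ \mu$), one obtains the sought vanishing. Alternatively, one may invoke relative Kawamata--Viehweg vanishing for klt surface contractions directly, since $-(K_S+\Delta_S)$ is $g|_S$-ample.

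Now, by the theorem on formal functions, $\varprojlim_n H^1(F_n, \sO_{F_n}) = 0$, where $F_n$ is the $n$-th infinitesimal thickening of the scheme-theoretic fiber over $z$ (with $F_1 = (g|_S)^{-1}(z)$ as a scheme and $(F_1)_{\mathrm{red}} = C$). The short exact sequences $0 \to I^n/I^{n+1} \to \sO_{F_{n+1}} \to \sO_{F_n} \to 0$ have kernels supported on the $1$-dimensional $F_1$, so $H^2$ vanishes and the transition maps $H^1(F_{n+1}, \sO_{F_{n+1}}) \to H^1(F_n, \sO_{F_n})$ are surjective. Since these are finite-length modules with surjective transitions, vanishing of the inverse limit forces each term to vanish; in particular $H^1(F_1, \sO_{F_1}) = 0$.

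Finally, pass from $F_1$ to $C^{\mathrm{wn}}$ in two standard steps. The nilradical sequence $0 \to \mathcal{N} \to \sO_{F_1} \to \sO_C \to 0$ on the $1$-dimensional $F_1$ yields a surjection $H^1(F_1, \sO_{F_1}) \twoheadrightarrow H^1(C, \sO_C)$, so $H^1(C, \sO_C) = 0$. The weak normalization $C^{\mathrm{wn}} \to C$ is a finite universal homeomorphism, giving $0 \to \sO_C \to \sO_{C^{\mathrm{wn}}} \to Q \to 0$ with $Q$ a torsion sheaf of finite length; its cohomology provides a surjection $H^1(C, \sO_C) \twoheadrightarrow H^1(C^{\mathrm{wn}}, \sO_{C^{\mathrm{wn}}})$, completing the argument. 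The main obstacle will be securing the relative vanishing in the positive-characteristic setting: one needs the klt restriction $(S,\Delta_S)$ to have rational underlying surface and the image $(T,(g|_S)_*\Delta_S)$ to remain klt/rational at $z$, which requires careful use of the positive-characteristic surface theory underlying the $3$-fold MMP step being considered.
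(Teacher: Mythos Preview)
Your overall strategy—restrict to the surface $S$, use adjunction to get a klt surface contraction, deduce $R^1(g|_S)_*\sO_S=0$, and then pass to $H^1$ of the exceptional curve—is the same as the paper's. The execution via formal functions and the chain of surjections $H^1(F_1)\twoheadrightarrow H^1(C)\twoheadrightarrow H^1(C^{\mathrm{wn}})$ is more explicit than the paper's one-line invocation, but logically equivalent.

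There is, however, a genuine gap at the very first step. You assert that $S$ is normal because it is an irreducible component of $\lfloor\Delta\rfloor$ in a dlt pair. In characteristic~$0$ this is standard, but the appendix is written to cover all residue characteristics (indeed the result is only new when $p\in\{2,3,5\}$), and in low characteristic normality of dlt centres is not available. The paper deals with this by \emph{not} assuming $S$ normal: it passes to the normalization $\bar S\to S$ and invokes the adjacent Lemma (loc.conn.1.lem) to see that this map is a universal homeomorphism. Adjunction is then performed on $\bar S$, giving a klt surface pair $(\bar S,\diff_{\bar S}(\Delta-S))$ and a $(K+\Delta)$-negative contraction with exceptional curve $\bar C$, whence $h^1(\bar C,\sO_{\bar C})=0$. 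The payoff of this route is that, since $\bar S\to S$ is a universal homeomorphism, $\bar C\to C$ is a homeomorphism and $\bar C$ is weakly normal, so $\bar C=C^{\mathrm{wn}}$ directly—no chain of surjections is needed. Your argument can be repaired by making exactly this move: replace $S$ by $\bar S$ throughout and then identify the exceptional curve on $\bar S$ with $C^{\mathrm{wn}}$.
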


Proof.   The normalization $\bar S\to S$ is a  homeomorphisms by (\ref{loc.conn.1.lem}). Let $\bar C\subset \bar S$ denote the preimage of
$C$. Then $\bar C$ is the exceptional set of an 
$\bigl(\bar S, \diff_{\bar S}(\Delta-S)\bigr)$-negative  contraction, hence $h^1(\bar C, \sO_{\bar C})=0$.  Furthermore,
$\bar C$ is  weakly normal and $\bar C\to C$  is a  homeomorphisms.
Thus $\bar C=C^{\mathrm wn}$. \qed

\begin{lemma}\label{loc.conn.1.lem}
 Let $(X,D+\Delta)$ be a dlt pair with $D$ irreducible and $\mathbb{Q}$-Cartier.
Then the normalization $\bar D\to D$ is a universal homeomorphism.
\end{lemma}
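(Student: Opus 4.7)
The strategy is to verify the two defining properties of a universal homeomorphism for the finite morphism $\bar D \to D$ between reduced schemes, namely that it is a set-theoretic bijection and that it induces purely inseparable residue field extensions at every point. Since both conditions are local on $D$, I would localize at a point $x \in D$ and work inside $\operatorname{Spec} \mathcal O_{X,x}$. The first observation is that since $(X, D+\Delta)$ is dlt, $D$ is irreducible, and $D$ is $\mathbb{Q}$-Cartier, the pair $(X, D+\Delta)$ is in fact plt in a Zariski neighborhood of $D$: any log canonical center of $(X, D+\Delta)$ other than $D$ would have to meet $D$ properly, but near the generic point of $D$ the pair is log smooth by definition of dlt, and the $\mathbb{Q}$-Cartier condition together with the ACC behavior of discrepancies forces this property to extend to a neighborhood of $D$ (cf.\ \cite[Ch.\ 2]{KollarSingulaitieofMMP}). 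Adjunction then produces a klt pair $\bigl(\bar D, \operatorname{Diff}_{\bar D}(\Delta)\bigr)$ satisfying $K_{\bar D} + \operatorname{Diff}_{\bar D}(\Delta) = (K_X + D+\Delta)|_{\bar D}$.

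For set-theoretic bijectivity of $\bar D \to D$, I would invoke the connectedness principle for log canonical centers (the Koll\'ar--Shokurov connectedness theorem in its plt form), which applies in the generality needed here by the three-dimensional MMP results cited in Remark~\ref{future.cite}. It says that the fibers of $\bar D \to D$ are connected over every point of $D$. Since $\bar D \to D$ is finite, a connected finite fiber reduces to a single point, giving bijectivity.

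For the purely inseparable residue field extensions, I would pass to the strict henselization $\mathcal O_{X,x}^{\mathrm{sh}}$ and analyze $\bar D \times_D \operatorname{Spec} \kappa(x)^{\mathrm{sep}}$. Any non-trivial separable extension of $\kappa(x)$ appearing in the residue field of a point of $\bar D$ lying over $x$ would, after passing to $\operatorname{Spec} \mathcal O_{X,x}^{\mathrm{sh}}$, produce several closed points in the preimage of $x$; but this contradicts the bijectivity established in the previous step, which is preserved under the strict henselization base change (normalization commutes with étale localization on a normal scheme). Hence the residue field extension at every point is purely inseparable, and the two conditions together give that $\bar D \to D$ is a universal homeomorphism.

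The main obstacle is the control of residue field extensions in positive characteristic. In characteristic zero the stronger conclusion that $D$ is actually normal is classical (see \cite[Ch.\ 4]{KollarSingulaitieofMMP}), making the lemma immediate. In positive characteristic wild phenomena can obstruct normality, but not the weaker universal-homeomorphism conclusion: in the three-dimensional setting with $p>5$ of interest here, the plt pair $(X, D+\Delta)$ becomes purely $F$-regular after the reductions from \autoref{thm.ExistencePFTRblowUP}, which forces $D$ itself to be $F$-pure and hence weakly normal, and weak normality is exactly what is needed to conclude that $\bar D \to D$ is a universal homeomorphism.
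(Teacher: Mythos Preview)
Your approach has a genuine gap. The claim that $(X, D+\Delta)$ is plt in a Zariski neighbourhood of $D$ is false in general: nothing in the hypotheses prevents $\lfloor\Delta\rfloor$ from containing a component $D'$ meeting $D$, in which case $D\cap D'$ is a log canonical centre properly contained in $D$ and the pair fails to be plt at those points. The ``ACC behaviour of discrepancies'' you invoke does not rescue this. Without plt, adjunction on $\bar D$ only yields a dlt (not klt) pair, and the rest of your outline does not go through as written. Even granting plt, your use of Koll\'ar--Shokurov connectedness is imprecise: that theorem controls connectedness of the non-klt locus in fibres of a contraction $X\to Z$ with $-(K_X+\Delta)$ relatively nef and big, and does not directly speak to fibres of the finite map $\bar D\to D$. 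There are adjunction-style arguments linking the two, but they rest on vanishing theorems that are delicate in positive characteristic, and you have not supplied them. Your closing paragraph about pure $F$-regularity then only addresses dimension $3$ with $p>5$ after a generalised PLT blowup, which is far narrower than the lemma as stated.

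The paper's proof is much shorter and sidesteps all of this. It first records that the dlt hypothesis forces $D$ to be regular outside a closed subset of codimension $\geq 2$ (\cite[2.31]{KollarSingulaitieofMMP}), and then invokes a classical local connectedness statement of Grothendieck, \cite[XIII.2.1]{SGA2}, using that a multiple of $D$ is locally a hypersurface in the normal (hence $S_2$, hence geometrically unibranch) ambient scheme $X$. This combination forces the formal branches of $D$ at any point to coincide, which is exactly the universal-homeomorphism statement. No plt reduction, no MMP-style connectedness theorem, and no restriction on dimension or characteristic enter.
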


Proof. By \cite[2.31]{KollarSingulaitieofMMP} $D$ is regular outside a subset of codimension $\geq 2$.   The rest follows from  \cite[XIII.2.1]{SGA2}. \qed

\subsection*{Acknowledgments} I thank J.~Carvajal-Rojas, A.~St{\"a}bler, J.~Witaszek and C.~Xu
for many comments and helpful discussions.
Partial  financial support    was provided  by  the NSF under grant number
DMS-1901855.

\fancyhf{}
\fancyhead[LE,RO]{\thepage}
\fancyhead[CE]{}
\fancyhead[CO]{}
\renewcommand\headrulewidth{0pt}
\pagestyle{fancy}

\bibliographystyle{skalpha}
\bibliography{MainBib}

\newcommand{\etalchar}[1]{$^{#1}$}
\def\cfudot#1{\ifmmode\setbox7\hbox{$\accent"5E#1$}\else
  \setbox7\hbox{\accent"5E#1}\penalty 10000\relax\fi\raise 1\ht7
  \hbox{\raise.1ex\hbox to 1\wd7{\hss.\hss}}\penalty 10000 \hskip-1\wd7\penalty
  10000\box7}
\providecommand{\bysame}{\leavevmode\hbox to3em{\hrulefill}\thinspace}
\providecommand{\MR}{\relax\ifhmode\unskip\space\fi MR}
\providecommand{\MRhref}[2]{%
  \href{http://www.ams.org/mathscinet-getitem?mr=#1}{#2}
}
\providecommand{\href}[2]{#2}
\begin{thebibliography}{BCRG{\etalchar{+}}19}

\bibitem[Abh66]{AbhyankarResolutionOfSingularities}
{\sc S.~S. Abhyankar}: \emph{Resolution of singularities of embedded algebraic
  surfaces}, Pure and Applied Mathematics, Vol. 24, Academic Press, New York,
  1966. {\sf\scriptsize 0217069 (36 \#164)}

\bibitem[Art69]{ArtinAlgebraicApproximationOfStructuiresOverCmpleteLocalRIngs}
{\sc M.~Artin}: \emph{Algebraic approximation of structures over complete local
  rings}, Inst. Hautes \'{E}tudes Sci. Publ. Math. (1969), no.~36, 23--58.
  {\sf\scriptsize 268188}

\bibitem[Art75]{ArtinWildlyRamifiedZ2Actions}
{\sc M.~Artin}: \emph{Wildly ramified {$Z/2$} actions in dimension two}, Proc.
  Amer. Math. Soc. \textbf{52} (1975), 60--64. {\sf\scriptsize MR0374136 (51
  \#10336)}

\bibitem[Art77]{ArtinCoveringsOfTheRtionalDoublePointsInCharacteristicp}
{\sc M.~Artin}: \emph{Coverings of the rational double points in characteristic
  {$p$}}, Complex analysis and algebraic geometry, Iwanami Shoten, Tokyo, 1977,
  pp.~11--22. {\sf\scriptsize 0450263}

\bibitem[ABL22]{ArvidssonBernasconiLaciniKVVforLogDelPezzosp>5}
{\sc E.~Arvidsson, F.~Bernasconi, and J.~Lacini}: \emph{On the
  {K}awamata--{V}iehweg vanishing theorem for log del {P}ezzo surfaces in
  positive characteristic}, Compos. Math. \textbf{158} (2022), no.~4, 750--763.
  {\sf\scriptsize 4438290}

\bibitem[Ber21]{BernasconiKVFailsLogDelPezzo}
{\sc F.~Bernasconi}: \emph{Kawamata--{V}iehweg vanishing fails for log del
  {P}ezzo surfaces in characteristic 3}, J. Pure Appl. Algebra \textbf{225}
  (2021), no.~11, Paper No. 106727, 16. {\sf\scriptsize 4228436}

\bibitem[BGO17]{BhattGabberOlssonFinitenessFun}
{\sc B.~{Bhatt}, O.~{Gabber}, and M.~{Olsson}}: \emph{{Finiteness of \'etale
  fundamental groups by reduction modulo $p$}}, ArXiv e-prints (2017).

\bibitem[BCRG{\etalchar{+}}19]{BhattCarvajalRojasGrafSchwedeTucker}
{\sc B.~Bhatt, J.~Carvajal-Rojas, P.~Graf, K.~Schwede, and K.~Tucker}:
  \emph{\'{E}tale fundamental groups of strongly {$F$}-regular schemes}, Int.
  Math. Res. Not. IMRN (2019), no.~14, 4325--4339. {\sf\scriptsize 3984070}

\bibitem[BL19]{BhattLurieRiemanHilbert}
{\sc B.~Bhatt and J.~Lurie}: \emph{A {R}iemann--{H}ilbert correspondence in
  positive characteristic}, Camb. J. Math. \textbf{7} (2019), no.~1-2, 71--217.
  {\sf\scriptsize 3922360}

\bibitem[BMP{\etalchar{+}}20]{BhattMaPatakfalviSchwedeTuckerWaldronWitaszekG+RegMMPthrefoldMixChar}
{\sc B.~{Bhatt}, L.~{Ma}, Z.~{Patakfalvi}, K.~{Schwede}, K.~{Tucker},
  J.~{Waldron}, and J.~{Witaszek}}: \emph{{Globally +-regular varieties and the
  minimal model program for threefolds in mixed characteristic}}, arXiv
  e-prints (2020), arXiv:2012.15801.

\bibitem[BP09]{BockleCohomologicalTheoryOfCrystals}
{\sc G.~B{\"o}ckle and R.~Pink}: \emph{Cohomological {T}heory of crystals over
  function fields}, European Mathematical Society, {Z\"urich}, 2009, Tracts in
  Mathematics.

\bibitem[BGS11]{BoissiereGAbberSermanVarietesLocalmentFactorielles}
{\sc S.~{Boissi{\`e}re}, O.~{Gabber}, and O.~{Serman}}: \emph{{Sur le produit
  de vari{\'e}t{\'e}s localement factorielles ou $\mathbb{Q}$-factorielles}},
  arXiv e-prints (2011), arXiv:1104.1861.

\bibitem[Bou71]{BoutotGroupePicardLocalAnneauHenselien}
{\sc J.-F. Boutot}: \emph{Groupe de {P}icard local d'un anneau hens\'{e}lien},
  C. R. Acad. Sci. Paris S\'{e}r. A-B \textbf{272} (1971), A1248--A1250.
  {\sf\scriptsize 282983}

\bibitem[Bou78]{BoutotSchemaPicardLocal}
{\sc J.-F. Boutot}: \emph{Sch\'ema de {P}icard local}, Lecture Notes in
  Mathematics, vol. 632, Springer, Berlin, 1978. {\sf\scriptsize 492263}

\bibitem[Bra21]{BraunFundamentalGroupKLTTopological}
{\sc L.~Braun}: \emph{The local fundamental group of a {K}awamata log terminal
  singularity is finite}, Invent. Math. \textbf{226} (2021), no.~3, 845--896.
  {\sf\scriptsize 4337973}

\bibitem[Bri17]{BrionStructureTheoremsAlgebraicGroups}
{\sc M.~Brion}: \emph{Some structure theorems for algebraic groups}, Algebraic
  groups: structure and actions, Proc. Sympos. Pure Math., vol.~94, Amer. Math.
  Soc., Providence, RI, 2017, pp.~53--126. {\sf\scriptsize 3645068}

\bibitem[BH98]{BrunsHerzog}
{\sc W.~Bruns and H.~J. Herzog}: \emph{Cohen--{M}acaulay rings}, 2 ed.,
  Cambridge Studies in Advanced Mathematics, Cambridge University Press, 1998.

\bibitem[Cad13]{CadoretGaloisCategories}
{\sc A.~Cadoret}: \emph{Galois categories}, Arithmetic and geometry around
  {G}alois theory, Progr. Math., vol. 304, Birkh\"{a}user/Springer, Basel,
  2013, pp.~171--246. {\sf\scriptsize 3408165}

\bibitem[CRST18]{CarvajalSchwedeTuckerEtaleFundFsignature}
{\sc J.~Carvajal-Rojas, K.~Schwede, and K.~Tucker}: \emph{Fundamental groups of
  {$F$}-regular singularities via {$F$}-signature}, Ann. Sci. \'{E}c. Norm.
  Sup\'{e}r. (4) \textbf{51} (2018), no.~4, 993--1016. {\sf\scriptsize 3861567}

\bibitem[CS19]{CarvajalRojasStaeblerTameFundamentalGroupsPurePairs}
{\sc J.~{Carvajal-Rojas} and A.~{St{\"a}bler}}: \emph{{Tame fundamental groups
  of pure pairs and Abhyankar's lemma}}, arXiv e-prints (2019),
  arXiv:1910.02111, to appear in Algebra \& Number Theory.

\bibitem[CR22]{CarvajalFiniteTorsors}
{\sc J.~A. Carvajal-Rojas}: \emph{Finite torsors over strongly {$F$}-regular
  singularities}, \'{E}pijournal G\'{e}om. Alg\'{e}brique \textbf{6} (2022),
  Art. 1, 30. {\sf\scriptsize 4391081}

\bibitem[CGS16]{CasciniGongyoSchwedeuniformbounds}
{\sc P.~Cascini, Y.~Gongyo, and K.~Schwede}: \emph{Uniform bounds for strongly
  {$F$}-regular surfaces}, Trans. Amer. Math. Soc. \textbf{368} (2016), no.~8,
  5547--5563. {\sf\scriptsize 3458390}

\bibitem[CT19]{CasciniTanakaPLTThreefoldsNonNormalCenters}
{\sc P.~Cascini and H.~Tanaka}: \emph{Purely log terminal threefolds with
  non-normal centres in characteristic two}, Amer. J. Math. \textbf{141}
  (2019), no.~4, 941--979. {\sf\scriptsize 3992570}

\bibitem[CT20]{CasciniTanakaRelativeSemiAmplenessPosChar}
{\sc P.~Cascini and H.~Tanaka}: \emph{Relative semi-ampleness in positive
  characteristic}, Proc. Lond. Math. Soc. (3) \textbf{121} (2020), no.~3,
  617--655. {\sf\scriptsize 4100119}

\bibitem[CTW18]{CasciniTanakaWitaszekKLTdelPezzoNotFregular}
{\sc P.~Cascini, H.~Tanaka, and J.~Witaszek}: \emph{klt del {P}ezzo surfaces
  which are not globally {$F$}-split}, Int. Math. Res. Not. IMRN (2018), no.~7,
  2135--2155. {\sf\scriptsize 3801481}

\bibitem[{Cha}03]{ChambertLoirFundamentalGroupsofRCCVarieties}
{\sc A.~{Chambert-Loir}}: \emph{{{\`A} propos du groupe fondamental des
  vari{\'e}t{\'e}s rationnellement connexes}}, arXiv Mathematics e-prints
  (2003), math/0303051.

\bibitem[CP19]{CossartPiltantResolutionSingularitiesArithmetical3folds}
{\sc V.~Cossart and O.~Piltant}: \emph{Resolution of singularities of
  arithmetical threefolds}, J. Algebra \textbf{529} (2019), 268--535.
  {\sf\scriptsize 3942183}

\bibitem[Das15]{DasStronglyFregularInversionOfAdjunction}
{\sc O.~Das}: \emph{On strongly {$F$}-regular inversion of adjunction}, J.
  Algebra \textbf{434} (2015), 207--226. {\sf\scriptsize 3342393}

\bibitem[dJ96]{deJongAlterations}
{\sc A.~J. de~Jong}: \emph{Smoothness, semi-stability and alterations}, Inst.
  Hautes \'Etudes Sci. Publ. Math. (1996), no.~83, 51--93. {\sf\scriptsize
  1423020 (98e:14011)}

\bibitem[D{\'e}g14]{TravauxdeGabber}
{\sc F.~D{\'e}glise}: \emph{Expos{\'e} {XXI}. {The} finiteness theorem for
  nonabelian coefficients.}, Travaux de Gabber sur l'uniformisation locale et
  la cohomologie \'etale des sch\'emas quasi-excellents. S\'eminaire \`a
  l'\'Ecole Polytechnique 2006--2008, Paris: Soci{\'e}t{\'e} Math{\'e}matique
  de France (SMF), 2014, pp.~529--553 (French).

\bibitem[DB77]{DeligneBoutotCohomologiETale}
{\sc P.~Deligne and J.~F. Boutot}: \emph{Cohomologie \'{e}tale: les points de
  d\'{e}part}, Cohomologie \'{e}tale, Lecture Notes in Math., vol. 569,
  Springer, Berlin, 1977, pp.~4--75. {\sf\scriptsize 3727433}

\bibitem[EV10]{EsnaultViehwegSurfaceSingularitiesDominatedSmoothVarieties}
{\sc H.~Esnault and E.~Viehweg}: \emph{Surface singularities dominated by
  smooth varieties}, J. Reine Angew. Math. \textbf{649} (2010), 1--9.
  {\sf\scriptsize 2746463}

\bibitem[FGI{\etalchar{+}}05]{FGAExplained}
{\sc B.~Fantechi, L.~G\"{o}ttsche, L.~Illusie, S.~L. Kleiman, N.~Nitsure, and
  A.~Vistoli}: \emph{Fundamental algebraic geometry}, Mathematical Surveys and
  Monographs, vol. 123, American Mathematical Society, Providence, RI, 2005,
  Grothendieck's FGA explained. {\sf\scriptsize 2222646}

\bibitem[Ful84]{FultonIntersection}
{\sc W.~Fulton}: \emph{Intersection theory}, Ergebnisse der Mathematik und
  ihrer Grenzgebiete (3) [Results in Mathematics and Related Areas (3)],
  vol.~2, Springer-Verlag, Berlin, 1984. {\sf\scriptsize 732620}

\bibitem[GNT19]{GongyoNakamuraTanakaRationalPointsFanoThreefoldsFiniteFields}
{\sc Y.~Gongyo, Y.~Nakamura, and H.~Tanaka}: \emph{Rational points on log
  {F}ano threefolds over a finite field}, J. Eur. Math. Soc. (JEMS) \textbf{21}
  (2019), no.~12, 3759--3795. {\sf\scriptsize 4022715}

\bibitem[GKP16]{GrebKebekusPeternellEtaleFundamental}
{\sc D.~Greb, S.~Kebekus, and T.~Peternell}: \emph{\'{E}tale fundamental groups
  of {K}awamata log terminal spaces, flat sheaves, and quotients of abelian
  varieties}, Duke Math. J. \textbf{165} (2016), no.~10, 1965--2004.
  {\sf\scriptsize 3522654}

\bibitem[Gro05]{SGA2}
{\sc A.~Grothendieck}: \emph{Cohomologie locale des faisceaux coh\'erents et
  th\'eor\`emes de {L}efschetz locaux et globaux ({SGA} 2)}, Documents
  Math\'ematiques (Paris) [Mathematical Documents (Paris)], 4, Soci\'et\'e
  Math\'ematique de France, Paris, 2005, S{\'e}minaire de G{\'e}om{\'e}trie
  Alg{\'e}brique du Bois Marie, 1962, Augment{\'e} d'un expos{\'e} de
  Mich{\`e}le Raynaud. [With an expos{\'e} by Mich{\`e}le Raynaud], With a
  preface and edited by Yves Laszlo, Revised reprint of the 1968 French
  original. {\sf\scriptsize 2171939}

\bibitem[GM71]{GrothendieckMurreTameFundamentalGroup}
{\sc A.~Grothendieck and J.~P. Murre}: \emph{The tame fundamental group of a
  formal neighbourhood of a divisor with normal crossings on a scheme}, Lecture
  Notes in Mathematics, Vol. 208, Springer-Verlag, Berlin, 1971.
  {\sf\scriptsize MR0316453 (47 \#5000)}

\bibitem[GZ94]{GurjarZhangPi1SmoothPointsLogdelPezzo}
{\sc R.~V. Gurjar and D.-Q. Zhang}: \emph{{$\pi_1$} of smooth points of a log
  del {P}ezzo surface is finite. {I}}, J. Math. Sci. Univ. Tokyo \textbf{1}
  (1994), no.~1, 137--180. {\sf\scriptsize 1298542}

\bibitem[HW19]{HaconWitaszekRationalityKawamataLogTerminalPosChar}
{\sc C.~Hacon and J.~Witaszek}: \emph{On the rationality of {K}awamata log
  terminal singularities in positive characteristic}, Algebr. Geom. \textbf{6}
  (2019), no.~5, 516--529. {\sf\scriptsize 4009171}

\bibitem[HW20]{h-w-p}
{\sc C.~{Hacon} and J.~{Witaszek}}, in preparation (2020).

\bibitem[HW22a]{HaconWitaszekMMPThreefoldsChar5}
{\sc C.~Hacon and J.~Witaszek}: \emph{The minimal model program for threefolds
  in characteristic 5}, Duke Math. J. \textbf{171} (2022), no.~11, 2193--2231
  (English).

\bibitem[HW22b]{HaconWitaszekMMPLowCharacteristic}
{\sc C.~Hacon and J.~Witaszek}: \emph{On the relative minimal model program for
  threefolds in low characteristics}, Peking Math. J. \textbf{5} (2022), no.~2,
  365--382 (English).

\bibitem[HX15]{HaconXuMMPPostiveChar}
{\sc C.~D. Hacon and C.~Xu}: \emph{On the three dimensional minimal model
  program in positive characteristic}, J. Amer. Math. Soc. \textbf{28} (2015),
  no.~3, 711--744. {\sf\scriptsize 3327534}

\bibitem[Har95]{HaraFRegFPureGraded}
{\sc N.~Hara}: \emph{{$F$}-regularity and {$F$}-purity of graded rings}, J.
  Algebra \textbf{172} (1995), no.~3, 804--818. {\sf\scriptsize MR1324183
  (97c:13001)}

\bibitem[Har98]{HaraDimensionTwo}
{\sc N.~Hara}: \emph{Classification of two-dimensional {$F$}-regular and
  {$F$}-pure singularities}, Adv. Math. \textbf{133} (1998), no.~1, 33--53.
  {\sf\scriptsize MR1492785 (99a:14048)}

\bibitem[Har77]{Hartshorne}
{\sc R.~Hartshorne}: \emph{Algebraic geometry}, Springer-Verlag, New York,
  1977, Graduate Texts in Mathematics, No. 52. {\sf\scriptsize MR0463157 (57
  \#3116)}

\bibitem[Hir64]{HironakaResolution}
{\sc H.~Hironaka}: \emph{Resolution of singularities of an algebraic variety
  over a field of characteristic zero. {I}, {II}}, Ann. of Math. (2) 79 (1964),
  109--203; ibid. (2) \textbf{79} (1964), 205--326. {\sf\scriptsize MR0199184
  (33 \#7333)}

\bibitem[ILL{\etalchar{+}}07]{Twenty-FourHoursOfLocalCohomology}
{\sc S.~B. Iyengar, G.~J. Leuschke, A.~Leykin, C.~Miller, E.~Miller, A.~K.
  Singh, and U.~Walther}: \emph{Twenty-four hours of local cohomology},
  Graduate Studies in Mathematics, vol.~87, American Mathematical Society,
  Providence, RI, 2007. {\sf\scriptsize 2355715}

\bibitem[JS22]{JeffriesSmirnovTransformationRuleForNaturalMultiplicities}
{\sc J.~Jeffries and I.~Smirnov}: \emph{A transformation rule for natural
  multiplicities}, Int. Math. Res. Not. IMRN (2022), no.~2, 999--1015.
  {\sf\scriptsize 4368877}

\bibitem[Kol13]{KollarSingulaitieofMMP}
{\sc J.~Koll\'{a}r}: \emph{Singularities of the minimal model program},
  Cambridge Tracts in Mathematics, vol. 200, Cambridge University Press,
  Cambridge, 2013, With a collaboration of S\'{a}ndor Kov\'{a}cs.
  {\sf\scriptsize 3057950}

\bibitem[Kol16]{KollarMapsBetweenLocalPicardGroups}
{\sc J.~Koll\'{a}r}: \emph{Maps between local {P}icard groups}, Algebr. Geom.
  \textbf{3} (2016), no.~4, 461--495. {\sf\scriptsize 3549172}

\bibitem[Kol21]{KollarrelativeMMPwoQfactoriality}
{\sc J.~Koll{\'a}r}: \emph{Relative {MMP} without {$\mathbb{Q}$}-factoriality},
  Electron. Res. Arch. \textbf{29} (2021), no.~5, 3193--3203. {\sf\scriptsize
  4342251}

\bibitem[KK]{KollarKovacsBirationalGeometrySurfaces}
{\sc J.~Koll\'ar and S.~Kov\'acs}: \emph{2. {B}irational geometry of log
  surfaces}, unpublished manuscript, available at {\url{
  https://sites.math.washington.edu/~kovacs/pdf/BiratLogSurf.pdf}}.

\bibitem[KM98]{KollarMori}
{\sc J.~Koll{\'a}r and S.~Mori}: \emph{Birational geometry of algebraic
  varieties}, Cambridge Tracts in Mathematics, vol. 134, Cambridge University
  Press, Cambridge, 1998, With the collaboration of C. H. Clemens and A. Corti,
  Translated from the 1998 Japanese original. {\sf\scriptsize MR1658959
  (2000b:14018)}

\bibitem[KW21]{k-wit}
{\sc J.~{Koll{\'a}r} and J.~{Witaszek}}: \emph{{Resolution and alteration with
  ample exceptional divisor}}, arXiv e-prints (2021), arXiv:2102.03162.

\bibitem[{Kov}17]{KovacsRationalSingularities}
{\sc S.~J. {Kov{\'a}cs}}: \emph{{Rational singularities}}, arXiv e-prints
  (2017), arXiv:1703.02269.

\bibitem[Kov18]{KovacsNonCMCanonicalSingularities}
{\sc S.~J. Kov\'{a}cs}: \emph{Non-{C}ohen--{M}acaulay canonical singularities},
  Local and global methods in algebraic geometry, Contemp. Math., vol. 712,
  Amer. Math. Soc., Providence, RI, 2018, pp.~251--259. {\sf\scriptsize
  3832406}

\bibitem[Lan02]{LangAlgebra}
{\sc S.~Lang}: \emph{Algebra}, third ed., Graduate Texts in Mathematics, vol.
  211, Springer-Verlag, New York, 2002. {\sf\scriptsize 1878556}

\bibitem[LX20]{LiXuStabilityofValuationsAndKollarComponents}
{\sc C.~Li and C.~Xu}: \emph{Stability of valuations and {K}oll\'{a}r
  components}, J. Eur. Math. Soc. (JEMS) \textbf{22} (2020), no.~8, 2573--2627.
  {\sf\scriptsize 4118616}

\bibitem[Lip69]{LipmanRationalSingularities}
{\sc J.~Lipman}: \emph{Rational singularities, with applications to algebraic
  surfaces and unique factorization}, Inst. Hautes \'Etudes Sci. Publ. Math.
  (1969), no.~36, 195--279. {\sf\scriptsize MR0276239 (43 \#1986)}

\bibitem[Lip76]{LipmanLocalPicardScheme}
{\sc J.~Lipman}: \emph{The {P}icard group of a scheme over an {A}rtin ring},
  Inst. Hautes \'{E}tudes Sci. Publ. Math. (1976), no.~46, 15--86.
  {\sf\scriptsize 0491737}

\bibitem[Mil17]{MilneAlgebraicGroups}
{\sc J.~S. Milne}: \emph{Algebraic groups: The theory of group schemes of
  finite type over a field}, Cambridge Studies in Advanced Mathematics,
  Cambridge University Press, 2017.

\bibitem[Mil80]{MilneEtaleCohomology}
{\sc J.~S. Milne}: \emph{\'{E}tale cohomology}, Princeton Mathematical Series,
  vol.~33, Princeton University Press, Princeton, N.J., 1980. {\sf\scriptsize
  559531}

\bibitem[Mum61]{MumfordFundGroup}
{\sc D.~Mumford}: \emph{The topology of normal singularities of an algebraic
  surface and a criterion for simplicity}, Inst. Hautes \'Etudes Sci. Publ.
  Math. (1961), no.~9, 5--22. {\sf\scriptsize 0153682}

\bibitem[Mum76]{MR0453732}
{\sc D.~Mumford}: \emph{Algebraic geometry. {I}}, Springer-Verlag, Berlin-New
  York, 1976, Complex projective varieties, Grundlehren der Mathematischen
  Wissenschaften, No. 221. {\sf\scriptsize 0453732}

\bibitem[Mur67]{MurreLecturesFundamentalGroups}
{\sc J.~P. Murre}: \emph{Lectures on an introduction to {G}rothendieck's theory
  of the fundamental group}, Tata Institute of Fundamental Research, Bombay,
  1967, Notes by S. Anantharaman, Tata Institute of Fundamental Research
  Lectures on Mathematics, No 40. {\sf\scriptsize 0302650}

\bibitem[Pat]{PatakfalviPositiveCharAG}
{\sc Z.~Patakfalvi}: \emph{Positive characteristic algebraic geometry},
  unpublished manuscript.

\bibitem[Pol22]{PolstraATheoremMCM}
{\sc T.~Polstra}: \emph{A theorem about maximal {C}ohen-{M}acaulay modules},
  Int. Math. Res. Not. IMRN (2022), no.~3, 2086--2094. {\sf\scriptsize 4373232}

\bibitem[Pro00]{PrhhorovBlowupsCanonicalSIngularities}
{\sc Y.~G. Prokhorov}: \emph{Blow-ups of canonical singularities}, Algebra
  ({M}oscow, 1998), de Gruyter, Berlin, 2000, pp.~301--317. {\sf\scriptsize
  1754677}

\bibitem[{Sti}17]{StibitzFundamentalGroups}
{\sc C.~{Stibitz}}: \emph{{\'Etale Covers and Local Algebraic Fundamental
  Groups}}, ArXiv e-prints (2017).

\bibitem[Tak00]{TakayamaSimpleConnectednessFanos}
{\sc S.~Takayama}: \emph{Simple connectedness of weak {F}ano varieties}, J.
  Algebraic Geom. \textbf{9} (2000), no.~2, 403--407. {\sf\scriptsize 1735807}

\bibitem[Tan18]{TanakaMMPExcellentSurfaces}
{\sc H.~Tanaka}: \emph{Minimal model program for excellent surfaces}, Ann.
  Inst. Fourier (Grenoble) \textbf{68} (2018), no.~1, 345--376. {\sf\scriptsize
  3795482}

\bibitem[{Tay}19]{TaylorInvsersionAdjunctionFSignature}
{\sc G.~{Taylor}}: \emph{{Inversion of adjunction for $F$-signature}}, arXiv
  e-prints (2019), arXiv:1909.10436.

\bibitem[TS22]{stacks-project}
{\sc {T}he {Stacks Project Authors}}: \emph{\textit{Stacks Project}}, 2022.

\bibitem[TX17]{TianXuFinitenessOfFundamentalGroups}
{\sc Z.~Tian and C.~Xu}: \emph{Finiteness of fundamental groups}, Compos. Math.
  \textbf{153} (2017), no.~2, 257--273. {\sf\scriptsize 3604863}

\bibitem[Tot19]{TotaroFailureKVforFanos}
{\sc B.~Totaro}: \emph{The failure of {K}odaira vanishing for {F}ano varieties,
  and terminal singularities that are not {C}ohen--{M}acaulay}, J. Algebraic
  Geom. \textbf{28} (2019), no.~4, 751--771. {\sf\scriptsize 3994312}

\bibitem[{Wit}20]{Witaszekflipslowchar}
{\sc J.~{Witaszek}}: \emph{{Flips in low characteristic}}.

\bibitem[Wit22]{WitaszekKeel'sBPFMixedChar}
{\sc J.~Witaszek}: \emph{Keel's base point free theorem and quotients in mixed
  characteristic}, Ann. of Math. (2) \textbf{195} (2022), no.~2, 655--705.
  {\sf\scriptsize 4387235}

\bibitem[Xu09]{XuNoetsPi1SmoothLociDelPezzo}
{\sc C.~Xu}: \emph{Notes on {$\pi_1$} of smooth loci of log del {P}ezzo
  surfaces}, Michigan Math. J. \textbf{58} (2009), no.~2, 489--515.
  {\sf\scriptsize 2595551}

\bibitem[Xu14]{XuFinitenessOfFundGroups}
{\sc C.~Xu}: \emph{Finiteness of algebraic fundamental groups}, Compos. Math.
  \textbf{150} (2014), no.~3, 409--414. {\sf\scriptsize 3187625}

\bibitem[XZ19]{XuZhangNonvanishing}
{\sc C.~Xu and L.~Zhang}: \emph{Nonvanishing for 3-folds in characteristic
  {$p>5$}}, Duke Math. J. \textbf{168} (2019), no.~7, 1269--1301.
  {\sf\scriptsize 3953434}

\bibitem[Yas14]{YasudaPcyclicMackayCorrespondenceMotivicIntegration}
{\sc T.~Yasuda}: \emph{The {$p$}-cyclic {M}c{K}ay correspondence via motivic
  integration}, Compos. Math. \textbf{150} (2014), no.~7, 1125--1168.
  {\sf\scriptsize 3230848}

\bibitem[Yas19]{YasudaDiscrepanciesPcyclicQuotientVarieties}
{\sc T.~Yasuda}: \emph{Discrepancies of {$p$}-cyclic quotient varieties}, J.
  Math. Sci. Univ. Tokyo \textbf{26} (2019), no.~1, 1--14. {\sf\scriptsize
  3929517}

\bibitem[Zha06]{QiRationalConnectednessLogFanoes}
{\sc Q.~Zhang}: \emph{Rational connectedness of log {${\bf Q}$}-{F}ano
  varieties}, J. Reine Angew. Math. \textbf{590} (2006), 131--142.
  {\sf\scriptsize 2208131}

\end{thebibliography}

\end{document}